\newtheorem{theorem}{Theorem}[section]
\newtheorem{lemma}[theorem]{Lemma}
\newtheorem{corollary}[theorem]{Corollary}
\theoremstyle{plain}
\newtheorem*{namedthm}{\namedthmname}
\newcounter{namedthm}
\newenvironment{named}[1]
  {\def\namedthmname{#1}%
   \refstepcounter{namedthm}%
   \namedthm\def\@currentlabel{#1}}
  {\endnamedthm}
\theoremstyle{definition}
\newtheorem{definition}[theorem]{Definition}
\theoremstyle{remark}
\numberwithin{equation}{section}
\newcommand{\thistheoremname}{}
\newtheorem*{genericthm*}{\thistheoremname}
\newenvironment{namedthm*}[1]
  {\renewcommand{\thistheoremname}{#1}%
   \begin{genericthm*}}
  {\end{genericthm*}}
\newcommand*{\dd}{\mathop{}\!\mathrm{d}}
\newcommand{\R}{{\mathbb R}}
\newcommand{\cF}{{\mathcal F}}
\newcommand{\cH}{{\mathcal H}}
\newcommand{\cP}{{\mathcal P}}
\newcommand{\al}{\alpha}
\newcommand{\ga}{\gamma}
\newcommand{\de}{\delta}
\newcommand{\e}{\varepsilon}
\newcommand{\la}{\lambda}
\newcommand{\si}{\sigma}
\newcommand{\vp}{\varphi}
\newcommand{\om}{\omega}
\newcommand{\Om}{\Omega}
\newcommand{\La}{\Lambda}
\newcommand{\Ga}{\Gamma}
\newcommand{\ti}{\times}
\newcommand{\pa}{\partial}
\newcommand{\su}{\subset}
\newcommand{\qu}{\quad}
\newcommand{\sm}{\setminus}
\newcommand{\ra}{\rightarrow}
\newcommand{\D}{\nabla}
\newcommand{\De}{\Delta}
\newcommand{\dist}{\operatorname{dist}}
\newcommand{\supp}{\operatorname{supp}}
\newcommand{\dv}{\operatorname{div}}
\newcommand{\trm}{\textrm}
\newcommand{\ov}{\overline}
\newcommand{\fr}{\frac}
\newcommand{\norm}[1]{\left\lVert#1\right\rVert}
\newcommand{\2}{\frac{1}{2}}
\def\({\left(}
\def\){\right)}
\def\<{\left\langle}
\def\>{\right\rangle}
\newcommand{\fc}{f\chi_{D}-g\chi_{D^c}}
\begin{document}

\title[Nonlocal composite membrane problem]{Optimal configuration and symmetry breaking phenomena in the
composite membrane problem with fractional Laplacian}

\author{Mar\'ia del Mar Gonz\'alez}
\address{Universidad Aut\'onoma de Madrid. Departamento de Matem\'aticas, Campus de Cantoblanco, 28049
Madrid, Spain}
\email{mariamar.gonzalezn@uam.es}

\author{Ki-Ahm Lee}
\address{Department of Mathematical Sciences, Seoul National University, Seoul 08826, Korea \& Korea
Institute for Advanced Study, Seoul 02455, Korea}
\email{kiahm@snu.ac.kr}

\author{Taehun Lee}
\address{Department of Mathematical Sciences, Seoul National University, Seoul 08826, Korea}
\email{boytaehun@snu.ac.kr}

\subjclass[2010]{Primary: 35R11, Secondary: 35R35, 49R05}

\keywords{two-phase free boundary problem, fractional Laplacian, composite membrane, optimization of eigenvalues, symmetry breaking phenomena, Steklov eigenvalue, unstable obstacle problem}

\begin{abstract}	
We consider the following eigenvalue optimization in the composite membrane problem with fractional
Laplacian: given a bounded domain $\Omega\subset \mathbb{R}^n$, $\alpha>0$ and $0<A<|\Om|$, find a
subset $D\subset \Omega$ of area $A$ such that the first Dirichlet eigenvalue of the operator
$(-\Delta)^s+\alpha \chi_D$ is as small as possible. The solution $D$ is called as an optimal
configuration for the data $(\Omega,\alpha,A)$. Looking at the well-known extension definition for the fractional Laplacian, in the case $s=1/2$ this is essentially the composite membrane problem for which the mass is concentrated at the boundary as one is trying to minimize the Steklov eigenvalue.

We prove existence of solutions and study properties
of optimal configuration $D$. This is a free boundary problem which could be formulated as a two-sided unstable obstacle problem.

Moreover, we show that for some rotationally symmetric domains (thin
annuli), the optimal configuration is not rotational symmetric, which implies the non-uniqueness of
the optimal configuration $D$. On the other hand, we prove that for a convex domain $\Omega$ having
reflection symmetries, the optimal configuration possesses the same symmetries,
which implies uniqueness of the optimal configuration $D$ in the ball case.
\end{abstract}

\maketitle

\section{Introduction}
We study an eigenvalue optimization in the composite membrane problem with fractional Laplacian. Let
$\Om$ be a bounded domain in $\R^n$ with $C^{1,1}$-boundary and $D\su \Om$ be a measurable subset. For
$0<s<1$ and $\al>0$, we consider the following eigenvalue problem
\begin{equation*}\label{eq-ev}
\begin{alignedat}{3}
	(-\Delta)^s u+\al \chi_Du&=\lambda u &&\quad\textrm{ in }\Omega,\\
	u&=0 &&\quad\textrm{ on }\R^n\setminus\Omega.
\end{alignedat}
\end{equation*}
Denote $\lambda_\Omega(\al,D)$ by the first eigenvalue of \eqref{eq-ev} and, for $0\le A\le |\Om|$,
define
\begin{align}\label{Lala}
\La_\Om(\al, A)=\inf_{|D|=A} \lambda_\Omega(\al, D).
\end{align}
If $D$ attains the minimum of \eqref{Lala}, then we call $D$ as an optimal configuration for the data
$(\Om,\al,A)$ and $(u,D)$ as an optimal pair. The main objective of this paper is to study optimal pairs
of the nonlocal problem above.

\begin{named}{Problem $\mathbf{(N)}$}\label{pb:N}
Investigate:
\begin{enumerate}[(i)]

\item the existence and regularity of optimal pairs $(u,D)$,

\item the shape of optimal configurations, $D$,

\item and the uniqueness of $(u,D)$ (up to a multiplicative constant for $u$).

\end{enumerate}

\end{named}

\subsection{History}
The composite membrane problem for the Laplacian operator was considered in
\cite{CGI+} which shows existence and regularity of optimal pairs. Moreover, it was shown in the same
paper that the optimal configuration $D$ is given by a sublevel set of $u$ whenever $(u,D)$ is an
optimal pair, i.e.,
\begin{align*}
D=\{x\in \Om: u(x)\le t\}
\end{align*}
for some constant $t$ satisfying $|D|=A$. In addition, they obtained symmetry and symmetry breaking
phenomena of $D$, which imply uniqueness and non-uniqueness depending on the domain $\Om$.

Following this work, the optimal regularity of optimal pairs and the regularity and singularity of the
free boundary $\pa D$ have been studied by several authors in \cite{CGK,Shahgholian07,CK,CKT08}. In
particular, the optimal regularity in dimension two was shown in \cite{CKT08}. We also refer to \cite{Pieli, CEP, AC} for the $p$-Laplacian version of composite membrane problem.

In addition, the very  recent works \cite{CV,CV2} consider the bi-Laplacian case, which is related to the composite plate
problem. They established a symmetry property which implies the uniqueness of the optimal pairs for the
corresponding problem when the domain is a ball, and the existence of the optimal pairs. However,
symmetry breaking phenomena, whose direct consequence is non-uniqueness, have not been considered except
for the Laplacian case \cite{CGI+}, even though some numerical evidence supports the occurrence of
symmetry breaking phenomena (see \cite{CV2} and \cite{KK}).

Finally, we recall the work \cite{Chanillo:conformal}, which relates the composite membrane problem to a
certain eigenvalue minimization problem in
two dimensions for the Laplace operator in conformal classes. They also provide the generalization to
any even dimension $n$, where the Laplacian is replaced by the GJMS operators (these are conformally covariant operators which the same principal symbol as $(-\Delta)^{n/2}$). For odd dimensions, this equivalence should be also possible  and the natural
setting is that of fractional order operators.

\subsection{Main
results}
Let us now go back to our question, \ref{pb:N}. We first note that $\la_\Om$ is invariant under any
change of $D$ by a measure zero set, so we will ignore such differences. Also,
we assume that
\begin{align}\label{assumption-alpha}
\al\le \ov\al_\Om(A),
\end{align}
where $\ov\al_\Om(A)$ is the unique constant satisfying $\La_\Om(\ov \al_\Om(A),A)=\ov \al _\Om(A)$. The
convenience of this notation will be clear in Lemma \ref{PN relation}.

Our first result concerns the existence of an optimal pair and properties of optimal configurations,
which give answers to $(i)$ and $(ii)$ in \ref{pb:N}. However, in contrast to the local case considered in
\cite{CGI+}, it is nontrivial to show that the optimal configuration $D$ is given by a sublevel set of
$u$ due to nonlocal effects. The reason for such difficulty comes from the fact that $(-\De)^s u$ may not be zero at a point where $u$ is
locally constant. In any case, it is quite straightforward to prove that
\begin{align*}
\{x\in\Om:u(x)<t\} \su D\su \{x\in\Om:u(x)\le t\},
\end{align*}
where $t:=\sup \{c:|\{u<c\}|<A\}$.

Now we state our first main result:

\begin{theorem}\label{thm-reg}
Let $\al>0$ satisfying \eqref{assumption-alpha}  and $A\in [0,|\Om|]$. Then:
 \begin{itemize}
 \item[\emph{(i)}] There exists an optimal pair $(u,D)$.
 \item [\emph{(ii)}] Any optimal pair satisfies
\begin{align*}
u\in H^{2s}_{loc}(\Om) \cap C^{\beta} (\Om) \cap C^s(\R^n),
\end{align*}
where $\beta$ is $2s$ if $s\not=\2$, and any constant in $(0,2s)$ if $s=\2$.
\item[\emph{(iii)}] Let $\al<\ov \al_\Om(A)$. If $s\le \2$, the optimal configuration $D$ is a sublevel set of $u$, i.e.,
\begin{align}\label{thmeq-D}
D=\{x\in \Om:u(x)\le t\},
\end{align}
where $t:=\sup \{c:|\{u<c\}|<A\}$.
\end{itemize}
\end{theorem}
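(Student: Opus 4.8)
The plan is to prove the three parts in order, using a mix of variational arguments and the bathtub principle.

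\textbf{Part (i): Existence.} I would take a minimizing sequence of configurations $D_k$ with $|D_k|=A$, and let $u_k$ be the (normalized, say $\norm{u_k}_{L^2}=1$, nonnegative) first eigenfunctions, so that $\lambda_\Om(\al,D_k)\to\La_\Om(\al,A)$. Since
\begin{align*}
\lambda_\Om(\al,D_k)=\inf_{\norm{v}_{L^2}=1}\left\{[v]_{H^s}^2+\al\int_{D_k}v^2\right\},
\end{align*}
the Gagliardo seminorms $[u_k]_{H^s}$ are uniformly bounded, so up to a subsequence $u_k\rah u$ weakly in $H^s_0(\Om)$ and strongly in $L^2(\Om)$ (by compact Sobolev embedding), and $\chi_{D_k}\overset{*}{\rah}\theta$ weak-$*$ in $L^\infty$ for some $0\le\theta\le 1$ with $\int_\Om\theta=A$. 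The first obstacle here is that $\theta$ need not be a characteristic function, so a priori one only gets a ``relaxed'' minimizer. To fix this I would use that, by weak lower semicontinuity and strong $L^2$ convergence, $[u]_{H^s}^2+\al\int_\Om\theta u^2\le\La_\Om(\al,A)$, and then invoke the bathtub principle: among all $0\le\theta\le 1$ with $\int\theta=A$, the functional $\theta\mapsto\int\theta u^2$ is minimized by $\theta=\chi_{\{u<t\}}+(\text{a piece of }\{u=t\})$, a genuine characteristic function up to the level set $\{u=t\}$; choosing $D$ to be this set gives $\lambda_\Om(\al,D)\le\La_\Om(\al,A)$, hence equality, and $u$ is seen to be the first eigenfunction. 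This argument simultaneously yields the inclusion $\{u<t\}\su D\su\{u\le t\}$ stated in the introduction.

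\textbf{Part (ii): Regularity.} Once $(u,D)$ is an optimal pair, $u$ solves $(-\De)^su=\lambda u-\al\chi_D u=:F$ in $\Om$ with $u=0$ outside $\Om$. Since $u\in H^s_0(\Om)\cap L^\infty$ (boundedness of first eigenfunctions is standard, e.g.\ via Moser iteration or the $L^2\to L^\infty$ smoothing of the fractional heat semigroup), the right-hand side $F$ is bounded. Interior $H^{2s}_{loc}$ regularity then follows from the standard fractional elliptic estimate, and $C^s(\R^n)$ up to the boundary from the boundary regularity theory for $(-\De)^s$ with bounded right-hand side on $C^{1,1}$ domains. For the interior $C^\beta$ claim: $F\in L^\infty$ gives $u\in C^{2s-\e}$ for $s\le\2$ directly, but for $s>\2$ (so $2s>1$) one bootstraps once — $u\in C^{2s-\e}$ implies $\chi_D u$ has the same modulus of continuity only away from $\pa D$, so one argues that $F$ has enough regularity (in fact $u$ itself is enough: $2s-\e<1$ input already gives $2s$ output for $2s<1$; the $s=\2$ logarithmic loss is the familiar one) to reach $C^{2s}_{loc}$, with the usual $\e$-loss exactly when $s=\2$. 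I do not expect subtlety here beyond carefully tracking the borderline $s=\2$ case.

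\textbf{Part (iii): $D$ is a sublevel set when $s\le\2$ and $\al<\ov\al_\Om(A)$.} By Part (i) we already know $\{u<t\}\su D\su\{u\le t\}$, so the only thing to rule out is that $D$ strictly contains part of $\{u=t\}$ while also missing part of it (i.e.\ $D\neq\{u\le t\}$), or more precisely that $D$ is not \emph{equal} to $\{u\le t\}$ up to measure zero. The natural strategy is: suppose $|\{u\le t\}|>A$ (otherwise $D=\{u\le t\}$ trivially up to null sets), so $t$ is a ``plateau'' level and $|\{u=t\}|>0$. I would argue by a perturbation/strong-maximum-principle contradiction — on the set $\{u=t\}$ we have $(-\De)^su=\lambda t-\al\chi_D t$, which takes two different values on $D\cap\{u=t\}$ and on $\{u=t\}\sm D$; but the key point is the sign condition coming from $\al<\ov\al_\Om(A)$, which (via Lemma \ref{PN relation}, relating $\La_\Om$ to the fixed-point constant $\ov\al_\Om$) forces $\lambda=\La_\Om(\al,A)>\al$, hence $\lambda t-\al t>0$ wherever $u=t$. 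On $\{u=t\}$ one also has, for a.e.\ such point, $(-\De)^su(x)\le 0$ when $x$ is a point of density one of a superlevel-type set — here is where $s\le\2$ enters, since for $s\le\2$ the fractional Laplacian at an interior maximum-type point of a plateau has a favorable sign that is incompatible with $(-\De)^su=(\lambda-\al\chi_D)t$ unless $\chi_D=1$ there; combining these forces $\{u=t\}\su D$ a.e., i.e.\ $D=\{u\le t\}$. I expect \textbf{this last step to be the main obstacle}: making rigorous the ``sign of $(-\De)^su$ on a plateau'' for $s\le\2$ — one wants something like: if $u$ attains its value $t$ on a set of positive measure and $u\le t$ on a neighborhood-in-measure, then $(-\De)^su\le 0$ a.e.\ on the interior of that set, with the inequality strict unless $u\equiv t$, and to see why $s>\2$ genuinely breaks this (presumably because then $(-\De)^s$ sees the set $\{u>t\}$ too strongly through the more singular kernel, or because $H^{2s}_{loc}$ with $2s>1$ no longer controls pointwise values of the gradient on the plateau). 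I would handle it by testing the equation against $(u-t)^+$ and $(t-u)^+$-type functions supported near $\{u=t\}$, or equivalently by a careful density-point analysis of the Gagliardo kernel, exploiting that for $s\le\2$ the relevant truncated integrals converge absolutely.
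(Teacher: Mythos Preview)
Your treatment of Parts (i) and (ii) is essentially correct and matches the paper: a minimizing sequence plus weak compactness plus the bathtub principle for existence (this is exactly \Cref{lem-regularity}), and standard fractional elliptic regularity for a bounded right-hand side for Part (ii), with the sharp $C^{2s}$ for $s\neq\fr12$ obtained not by bootstrapping the nonlocal equation but via the extension problem (\Cref{lem-optreg}).

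Part (iii), however, has a genuine gap, and the paper's argument is entirely different from what you propose. Your ``sign of $(-\De)^s u$ on a plateau'' heuristic cannot work: the level $t$ is \emph{not} an extremum of $u$ (since $0<t<\sup_\Om u$), so at a density point $x_0$ of $\{u=t\}$ the integral $c_{n,s}\int\fr{t-u(y)}{|x_0-y|^{n+2s}}\dd y$ carries far-field contributions of both signs and has no reason to be $\le 0$. Nor can you force a contradiction from $(-\De)^s u$ taking two distinct values on $D\cap\{u=t\}$ versus $\{u=t\}\sm D$: $(-\De)^s u$ is only shown to be continuous at points where $u$ is locally constant on an \emph{open} set (\Cref{lem-continuity}), not at mere density points of a level set. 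The paper remarks explicitly (just after the statement of \Cref{thm-reg}) that the local-case device ``weak derivatives vanish a.e.\ on level sets'' fails for nonlocal operators, so your proposed route is precisely the one that breaks down.

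What the paper actually does is prove $|\{u=t\}|=0$ by a blowup argument in the spirit of Fall--Felli. One sets $v=t-u$, passes to the Caffarelli--Silvestre extension, and rewrites the equation as a two-phase unstable obstacle problem \eqref{eq-v}. At a free boundary point one studies the rescalings $v_r(X)=r^{-(1-a)}v(rX)$ and shows (\Cref{sec:blowups}) that they converge to a blowup $v_0$. The heart of the matter is proving $v_0\not\equiv 0$ on $\R^n$, which requires a \emph{nondegeneracy} estimate $\sup_{B_r(x_0)}|t-u|\ge Cr^{2s}$ (\Cref{lem-nondeg}). This is where $s\le\fr12$ genuinely enters: the proof builds a sequence $\{x_j\}$ along which $|t-u|$ grows geometrically and closes via the inequality $\sum_j|x_j-x_{j-1}|^{2s}\ge\big(\sum_j|x_j-x_{j-1}|\big)^{2s}$, valid only when $2s\le 1$. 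With nontrivial blowups in hand, \Cref{lem-D-main} argues that if $|\{u=t\}|>0$, then at a Lebesgue density point the rescalings are forced to vanish in $L^2$ in the limit, contradicting nontriviality. None of this machinery is supplied by testing against $(u-t)^\pm$ or by Gagliardo-kernel sign considerations.
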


Notice that $C^{2s}(\Om)$ (resp. $C^{0,1}(\Om)$) for $s\not=\fr{1}{2}$ (resp. $s=\fr{1}{2}$) is the optimal regularity for $u$ since $(-\De)^s u$ is not continuous in
$\Om$. We also remark that the sublevel set property \eqref{thmeq-D} for local operators can be easily
proved, if one has sufficient regularity, from the well-known fact that the weak derivative of $u$ is
zero a.e. on its constant set (see \cite{CGI+} for the Laplacian and \cite{CV} for the bi-Laplacian).
But this property no longer holds for nonlocal operators. This is also the case in the $p$-Laplacian
version of composite membrane problem because of the lack of regularity \cite{Pieli}.

However, we can still expect the sublevel set property \eqref{thmeq-D} even though $(-\De)^su$ may not
be zero on the locally constant points. Heuristically,
this is because
$(-\De)^s u$ is continuous at the locally constant points (\Cref{lem-continuity}) so that any connected
component of the interior of $\{u=t\}$ should be contained in either $int(D)$ or $int(\Om\sm D)$, see
\Cref{cor-D}.\\

In order to show the sublevel set property, we need to borrow some techniques coming from free boundary problems. The main idea is, first, to adapt the arguments in \cite{FF} on unique continuation properties for fractional Laplacian equations in order to show that the level set $\{u=t\}$ has measure zero. This involves looking at the structure of blowup limits at a free boundary point, proving that they are non-trivial in order to get a contradiction.

In particular, for every $s\in(0,1)$ we prove that blowup sequences converge.
Then, to show that their limits are non-trivial, we study optimal regularity and non-degeneracy.  Our proof only works when $s\in(0,1/2]$ because we are only able to control non-degeneracy  in this case. More precisely, see the proof of Lemma \ref{lem-nondeg}, which is based on the arguments in \cite{CRS}. Note also that the case $s=\2$ is more involved due to the loss of regularity and it needs to be considered separately in the proof.

One of the crucial steps to obtain regularity is to transform our problem into a
two-phase unstable obstacle problem for the fractional Laplacian. Defining
$v=t-u$, $f=(\La-\al )u$ and $g=-\La u$, we consider
\begin{equation}\label{obs-int}
\begin{split}
	-(-\Delta)^s u&= f\chi_{D}-g\chi_{D^c} \qu\textrm{on } \Om\subset \R^{n},\\
	v&=t \quad \qu\textrm{on } \mathbb R^n\setminus \Om.
\end{split}
\end{equation}
In our case, $f$ and $g$ are functions with
\begin{equation}\label{conditions:fg}
f>0, \quad g<0, \quad f+g<0,
\end{equation}
 which is referred as an unstable problem, see \cite{MW,AG}.

The classical version of \eqref{obs-int} has been studied earlier for various conditions on $f$ and $g$. For
instance, if $f>0$ and $g>0$, the corresponding two-phase membrane problem was
considered in \cite{SW06, SUW07, SUW04, LSE09}. In the case of $f>0$ and $f+g>0$, on the other hand, we
refer to \cite{Weiss01, Uraltseva01}. The composite membrane problem, which corresponds to conditions \eqref{conditions:fg}, can be found in \cite{CGI+, Shahgholian07, CK, CKT08} as stated above.
However, to the best of authors knowledge, the nonlocal version \eqref{obs-int}, has not been studied so far
except \cite{ALP,AG} whose $f$ and $g$ are constants. \\

Our second result shows that if the domain has some geometric properties, then optimal pairs also have
some geometric properties. The proof is based on the Steiner symmetrization method with a slight
modification of the kernel in the singular integral.

\begin{theorem}\label{thm-symm}
Let $\Om$ be a domain in $\mathbb R^n$. Assume that it has symmetry and convexity with
respect to the hyperplane $\{x_1=0\}$, i.e., for each $x'\in\R^{n-1}$ the set $\{x_1:(x_1,x')\in\Om\}$
is either an interval of the form $(-b,b)$ or the empty set. Then, for any optimal pair $(u,D)$ both $u$
and $D$ are symmetric with respect to $\{x_1=0\}$, and $D^c$ is convex with respect to
$\{x_1=0\}$. Moreover, $u$ is decreasing in $x_1$ for $x_1 \geq 0$.
\end{theorem}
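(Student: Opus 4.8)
The plan is to use Steiner symmetrization (i.e.\ rearrangement in the $x_1$ direction) applied to an optimal pair $(u,D)$, exploiting the variational characterization of $\lambda_\Om(\al,D)$ as a minimum of a Rayleigh-type quotient. Recall that for the fractional Laplacian the relevant energy is the Gagliardo seminorm
\begin{align*}
\lambda_\Om(\al,D)=\inf_{w}\frac{\frac{c_{n,s}}{2}\int_{\R^n}\int_{\R^n}\frac{|w(x)-w(y)|^2}{|x-y|^{n+2s}}\dx\dy+\al\int_D w^2\dx}{\int_\Om w^2\dx},
\end{align*}
the infimum being over $w$ vanishing outside $\Om$. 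Given an optimal pair $(u,D)$ with $u\ge 0$ and $u>0$ in $\Om$ (by the strong maximum principle for $(-\De)^s$), I would let $u^\star$ denote the Steiner symmetrization of $u$ in the $x_1$ variable — the symmetric decreasing rearrangement of $u$ in the slices $\{(x_1,x'):x_1\in\R\}$ — and let $D^\star$ be the sublevel set $\{u^\star\le t\}$ with the same measure $A$. Because the domain $\Om$ is itself Steiner symmetric and convex in $x_1$, the symmetrized function still vanishes outside $\Om$, so it is an admissible competitor; and $|D^\star|=|D|=A$. The numerator of the Rayleigh quotient does not increase under Steiner symmetrization: the denominator $\int u^2$ is preserved, the potential term $\al\int_D u^2$ is preserved when $D$ is replaced by the superlevel-type symmetrization compatible with $u$ (using that $\al\chi_D u^2$ integrated is controlled by the layer-cake / bathtub arrangement), and the crucial point is that the Gagliardo seminorm decreases, which follows from a Riesz-type rearrangement inequality for the kernel $|x-y|^{-n-2s}$ restricted to the $x_1$-direction. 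Hence $\lambda_\Om(\al,D^\star)\le\lambda_\Om(\al,D)$, and by optimality equality holds, which forces $u$ to coincide (up to the usual null-set ambiguity) with its own Steiner symmetrization; this yields that $u$ is even in $x_1$ and nonincreasing in $|x_1|$, and then $D=\{u\le t\}$ (Theorem~\ref{thm-reg}(iii), when applicable, or the inclusion sandwich in general) inherits symmetry and convexity of $D^c$ in $x_1$.

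The main technical obstacle is the equality case of the Steiner-rearrangement inequality for the nonlocal seminorm: decreasing the energy is routine, but extracting \emph{strict} monotonicity (and hence pinning down $u$ exactly, not just up to a symmetrization) requires a careful analysis. The standard trick, which the authors allude to with the phrase ``a slight modification of the kernel in the singular integral,'' is to replace $|x-y|^{-n-2s}$ by a strictly decreasing, strictly convex truncation or regularization of the kernel so that the rearrangement inequality becomes strict unless the function is already symmetric decreasing; one then passes to the limit. Concretely, I would first prove the inequality and the equality-rigidity statement for such a modified kernel, conclude that $u$ is Steiner symmetric, and then handle the boundary condition: because $\Om$ is convex in $x_1$, the ``outside'' region $\R^n\setminus\Om$ sits symmetrically, so the symmetrization genuinely lands in the admissible class. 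A secondary subtlety is that $u$ need not be strictly decreasing a priori on the whole ray $x_1\ge 0$ — it could be locally constant on the flat part $\{u=t\}$ — so the statement ``$u$ is decreasing in $x_1$ for $x_1\ge 0$'' should be read as nonincreasing, with strict decrease away from the constancy set; one argues strict monotonicity on $\{u>t\}$ using the strong maximum principle together with the equation $-(-\De)^s u = (\La-\al)u>0$ there (when $\al<\La$), or $-(-\De)^su=-\La u<0$ on $\{u<t\}\subset D$, propagating strictness.

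For the symmetry of $D$ itself and convexity of $D^c$, once $u$ is known to be even and nonincreasing in $|x_1|$, any sublevel set $\{u\le t\}$ is the complement of a slab-convex set: for each fixed $x'$, $\{x_1:u(x_1,x')>t\}$ is an interval symmetric about $0$ (possibly empty, possibly all of the $x_1$-slice of $\Om$), hence $D^c=\{u>t\}$ is convex in $x_1$ and $D$ is symmetric. When Theorem~\ref{thm-reg}(iii) does not directly apply (e.g.\ $s>1/2$ or $\al=\ov\al_\Om(A)$), I would instead use the inclusion $\{u<t\}\subset D\subset\{u\le t\}$ from the introduction together with the fact that $\{u=t\}$ has been shown (in the relevant range) to be null, or argue directly that the symmetrized $D^\star$ is optimal and that any optimal $D$ must, modulo null sets, agree with it on the set where $u\ne t$; the slices of $\{u=t\}$ being intervals (symmetric about $0$) preserves the convexity of $D^c$ regardless. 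Finally, the ball case follows by applying the theorem to every hyperplane through the origin, forcing $u$ to be radial and radially decreasing and $D$ to be a concentric annular complement of a ball, which is unique; I would state this as a corollary.
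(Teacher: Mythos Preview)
Your proposal is correct and follows essentially the same approach as the paper: Steiner symmetrization in $x_1$, preservation of the $L^2$ norm, a decrease of the potential term and of the Gagliardo seminorm, and analysis of the equality case via a regularized kernel. The paper's specific regularization is $K_\e(x_1;x')=(|x_1|^2+|x'|^2+\e)^{-(n+2s)/2}$, with Theorems~3.7 and~3.9 of Lieb--Loss supplying the inequality and its rigidity, and the competitor set is defined by $\chi_{D_*}=1-(\chi_{D^c})^*$ rather than $\{u^\star\le t\}$; these are minor variations on exactly what you outlined.
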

Using this symmetry property, we obtain the first uniqueness result when the domain is a ball. Indeed, once we
have that the optimal configuration is of the form \eqref{thmeq-D}, $D$ is determined by $A$, which
makes that our problem does not have a free boundary anymore.  For this fixed boundary problem, we can
see that $u$ is a unique solution by the strong maximum principle and the fact that any solution has
a sign, i.e., either $u$ is always positive or $u$ is always negative.

\begin{corollary}\label{cor-ball}
Assume that the domain $\Om$ is the unit ball. Then there is a unique optimal pair $(u,D)$ (up to
multiplication by nonzero constants). Moreover, the solution $u$ is rotationally symmetric and strictly
decreasing in radial direction, and $D$ is a shell region of the form
\begin{align}\label{D-radial}
D=\{x:r(A)\le |x|<1\},
\end{align}
where $r(A)$ is the constant satisfying $|D|=A$.
\end{corollary}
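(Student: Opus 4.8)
\textbf{Proof proposal for \Cref{cor-ball}.}
The plan is to deduce everything from \Cref{thm-symm} applied to the abundance of symmetry hyperplanes of the ball, together with the sublevel set structure \eqref{thmeq-D} from \Cref{thm-reg}(iii). First I would observe that $\Om = B_1$ is convex and symmetric with respect to every hyperplane through the origin, so \Cref{thm-symm} tells us that for any optimal pair $(u,D)$, both $u$ and $D$ are symmetric with respect to every such hyperplane, and $u$ is monotone along each axis on the corresponding half-space. A function on $B_1$ invariant under all reflections fixing the origin is necessarily radial, hence $u(x) = \psi(|x|)$ for some profile $\psi$, and the monotonicity statement in \Cref{thm-symm} forces $\psi$ to be nonincreasing. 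Since $\al < \ov\al_\Om(A)$ is not assumed here explicitly, I would first note that the interesting case is $\al < \ov\al_\Om(A)$ where \Cref{thm-reg}(iii) applies (the boundary case $\al = \ov\al_\Om(A)$ reduces to a known fixed-point situation via \Cref{PN relation}, and one can argue separately or note $D = \Om$); assuming $\al<\ov\al_\Om(A)$ and $s \le \2$, the set $D = \{u \le t\}$ is then a superlevel set of the radial distance, i.e. of the form \eqref{D-radial} with $r(A)$ chosen so that $|D| = A$.

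Next I would upgrade ``nonincreasing'' to ``strictly decreasing''. The key point is that $v = t-u$ solves the two-phase obstacle problem \eqref{obs-int} with $f = (\La-\al)u > 0$ on $D$ and $g = -\La u < 0$ on $D^c$ (using $\la > 0$ and $\la < \al$, i.e. $\al < \ov\al_\Om(A)$, together with the sign of $u$). On $D^c = \{|x| < r(A)\}$ we have $(-\De)^s u = \La u$ with $\La > 0$ and $u > 0$, so $u$ cannot be locally constant there; on $D$ we have $(-\De)^s u = (\La - \al) u < 0$, again ruling out local constancy. Combining this with radial symmetry and the monotonicity already established, a standard argument — comparing $u$ with its reflection across a hyperplane not through the origin, or a direct application of the strong maximum principle for $(-\De)^s$ to the difference quotient — shows $\psi$ is strictly decreasing on $[0,1)$. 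In particular the level set $\{u = t\}$ is exactly the sphere $\{|x| = r(A)\}$, consistent with \eqref{D-radial}.

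Finally I would prove uniqueness. Once the geometric form of $D$ is pinned down by $A$ alone via \eqref{D-radial}, the problem loses its free boundary: $D$ is a \emph{fixed} shell, and $u$ is the first Dirichlet eigenfunction of the fixed operator $(-\De)^s + \al\chi_D$ on $\Om$ with eigenvalue $\La = \La_\Om(\al,A)$. The first eigenvalue of this self-adjoint operator is simple and its eigenfunction is of one sign (by the usual variational characterization, since $|u|$ is an admissible competitor with the same Rayleigh quotient and the strong maximum principle for $(-\De)^s$ forces a one-signed minimizer to be strictly one-signed in $\Om$). Hence $u$ is unique up to a nonzero multiplicative constant, and so is the optimal pair $(u,D)$. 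I expect the main obstacle to be the strict-monotonicity step: passing from the weak monotonicity in $x_1$ furnished by \Cref{thm-symm} to strict radial monotonicity requires care because $u$ is genuinely constant ($= t$) on the whole interface $\{|x| = r(A)\}$, so one must exploit the sign of $(-\De)^s u$ on each side — which is where the hypotheses $\al < \ov\al_\Om(A)$ and $\La > 0$ enter — rather than any naive maximum principle; the remaining steps are routine given \Cref{thm-symm} and \Cref{thm-reg}.
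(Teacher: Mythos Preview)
Your overall strategy matches the paper's: radial symmetry and weak monotonicity from \Cref{thm-symm}, shell form of $D$, and uniqueness once $D$ is fixed. For uniqueness the paper takes two optimal pairs $(u_1,D)$, $(u_2,D)$ (same $D$ by the shell argument), sets $v = u_1/t_1 - u_2/t_2$, notes $v$ solves the same eigenvalue equation and hence must have a sign in $\Om$ (via \Cref{lem-sign} and \Cref{PN relation}), yet $v$ vanishes on $\{|x|=r(A)\}$; this is your simplicity argument spelled out concretely.

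The strict-monotonicity step, however, has a genuine gap. You argue that on $D^c$ one has $(-\De)^s u = \La u > 0$, ``so $u$ cannot be locally constant there'', and analogously on $D$. This inference fails for nonlocal operators: if $u \equiv c$ on an open set $U$, then for $x\in U$
\[
(-\De)^s u(x)=c_{n,s}\int_{\R^n\sm U}\fr{c-u(y)}{|x-y|^{n+2s}}\dd y,
\]
which is a nontrivial continuous function of $x$, and nothing prevents it from equalling $\La c$ (or $(\La-\al)c$) throughout $U$. Unlike the local case, $(-\De)^s$ of a locally constant function is \emph{not} zero, so the sign of the right-hand side alone rules out nothing. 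The paper closes this via \Cref{lem-notconst}: if $u$ were locally constant near some point then, by \Cref{cor-D}, the right-hand side $(\La-\al\chi_D)u$ is locally constant as well, and the unique continuation property for $(-\De)^s$ from \cite{FF} forces $u\equiv 0$. Your fallback suggestions do not obviously rescue the step either: a reflection across a hyperplane not through the origin does not preserve $\Om=B_1$, and ``strong maximum principle for the difference quotient'' is unclear since $(-\De)^s$ does not interact simply with radial shifts. (A minor slip: you write ``$\la<\al$'' where you need $\La>\al$ to get $f=(\La-\al)u>0$; this follows from \Cref{PN relation}, since $\La-\al=h\Theta>0$.)
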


Our last result in this paper is a symmetry breaking property when the domain is an annulus in $\R^2$,
for $0<s<\frac 1 2$. While the scheme of proof follows closely that of the local case in \cite{CGI+}, the arguments in their paper use the fact that the Laplacian has a simple expression  in polar coordinates. On the contrary, the fractional Laplacian has no easy decomposition in spherical harmonics (see, for instance, \cite{ACDFGW} and the references therein).

One of the main ingredients in our proof is the following decomposition formula: in
polar coordinates, for any $x=(r,\theta_0)\in \Om$ with fixed angle, given a function of the form
$v=f(r)g(\theta)$,
\begin{align*}
(-\De)^s v(x)= g(\theta_0)(-\De)^s f (r)+c_{n,s}\int_0^\infty\int_{\pa
B_{\rho}}\fr{f(\rho)(g(\theta_0)-g(\theta))\rho^{n-1}}{|x-(\rho,\theta)|^{n+2s}} \,\trm{d} \theta\trm{d}
\rho.
\end{align*}
Moreover, we relate the fractional Laplacian $(-\De)^s$ of a rotationally symmetric function to the one-dimensional fractional Laplacian $(-\De)^s_1$ of a function of radial variable.
This connects some rotationally symmetric eigenvalue problem with a third eigenvalue problem which is
not rotationally symmetric. See \Cref{lem-estiB} and the equations above for details. We remark that
the annulus is symmetric with respect to any axis but it is not convex, which violates the assumption in
\Cref{thm-symm}.

\begin{theorem}\label{thm-symm-breaking}
Let $0<s<\2$. For an annulus domain \begin{align*}
\Om_b=\{x\in \R^2: b<|x|<b+1\}
\end{align*} with sufficiently large $b\ge 1$, the optimal configuration $D$ in $\Om_b$ does not have
rotational symmetry.
\end{theorem}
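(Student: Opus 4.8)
The plan is to argue by contradiction: suppose that for all large $b$ the optimal configuration $D$ in $\Om_b$ is rotationally symmetric. By \Cref{cor-ball}-type reasoning (or directly from \Cref{thm-reg}(iii), which applies since $s<\2$), a rotationally symmetric optimal configuration must be an annular shell $D=\{x: r(A)\le |x|<b+1\}$ or $\{x: b<|x|\le r(A)\}$, and the associated eigenfunction $u$ is rotationally symmetric. So the assumption reduces to comparing two competing eigenvalues: the optimal rotationally symmetric one, call it $\La^{\mathrm{rad}}_b$, obtained by minimizing over annular shells $D$ (equivalently, a one-dimensional problem in the radial variable via the reduction to $(-\De)^s_1$ announced before \Cref{lem-estiB}), and a genuinely asymmetric competitor $\La^{\mathrm{asym}}_b$ built from a test configuration that breaks rotational symmetry. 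If I can show $\La^{\mathrm{asym}}_b<\La^{\mathrm{rad}}_b$ for $b$ large, the rotationally symmetric $D$ cannot be optimal, contradiction.

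The construction of the asymmetric competitor is the heart of the matter and follows the local blueprint of \cite{CGI+}. The idea is that a thin annulus $\Om_b$, after the change of variables $x=(b+t)(\cos\varphi,\sin\varphi)$, looks more and more like the flat strip $(0,1)\times \mathbb{R}$ with periodic boundary conditions in $\varphi$ of period $2\pi(b+\cdot)$, i.e. of length $\sim 2\pi b\to\infty$; so in the limit the problem degenerates to a one-dimensional-in-$t$ problem on the strip, but now with \emph{no} constraint forcing the mass $D$ to be spread uniformly in the angular direction. The decomposition formula stated just before \Cref{thm-symm-breaking},
\begin{align*}
(-\De)^s v(x)= g(\theta_0)(-\De)^s f (r)+c_{n,s}\int_0^\infty\int_{\pa B_{\rho}}\fr{f(\rho)(g(\theta_0)-g(\theta))\rho^{n-1}}{|x-(\rho,\theta)|^{n+2s}}\,\trm{d}\theta\trm{d}\rho,
\end{align*}
together with the reduction of $(-\De)^s$ of a radial function to $(-\De)^s_1$, is used to make this limiting identification quantitative: one shows that the optimal radial eigenvalue $\La^{\mathrm{rad}}_b$ converges (after subtracting/rescaling the divergent angular piece, or equivalently restricting to the $t$-direction) to the value $\mu_1$ of a one-dimensional composite-membrane problem $(-\De)^s_1 + \alpha\chi_E$ on an interval with a mass constraint $|E|=A/(2\pi b)$ placed at one end, while for the asymmetric competitor one is allowed to place the \emph{same amount} of mass $A$ concentrated near a single angular sector, which — because concentrating mass lowers the first eigenvalue more effectively than spreading it, once the angular coupling term becomes negligibly small as $b\to\infty$ — yields a strictly smaller eigenvalue $\mu_1^{\mathrm{conc}}<\mu_1$. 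Propagating these two limits back with uniform error bounds in $b$ gives $\La^{\mathrm{asym}}_b<\La^{\mathrm{rad}}_b$ for $b$ sufficiently large.

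In more detail, I would carry out the steps as follows. (1) Characterize rotationally symmetric optimal pairs: use \Cref{thm-reg}(iii) to get $D$ a shell and $u=u(|x|)$ solving a radial equation, then use \Cref{lem-estiB} and the surrounding material to rewrite the radial eigenvalue problem as a one-dimensional fractional problem, and pass to the $b\to\infty$ limit to identify $\lim_b\La^{\mathrm{rad}}_b$ (suitably normalized) with $\mu_1^{\mathrm{spread}}$. (2) Build the asymmetric competitor: choose $D_b$ to be (an approximation of) a set concentrated in a fixed angular sector $\{|\varphi|<\delta\}$ of the annulus but with the correct total area $A$, and a test function $v_b$ of product type $f(t)g(\varphi)$; estimate its Rayleigh quotient using the decomposition formula, showing the cross term $c_{n,s}\int\int \cdots$ is $O(b^{-\kappa})$ for some $\kappa>0$ because the kernel $|x-(\rho,\theta)|^{-(n+2s)}$ decays in the angular separation and the "flat" directions dominate. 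This produces $\limsup_b \La^{\mathrm{asym}}_b \le \mu_1^{\mathrm{conc}}$. (3) Show the strict one-dimensional inequality $\mu_1^{\mathrm{conc}}<\mu_1^{\mathrm{spread}}$: this is a genuinely one-dimensional (interval) fact about the composite membrane problem — concentrating a fixed mass $A$ is better than spreading it over a longer interval when you are minimizing the first eigenvalue with a Dirichlet condition far away — and should follow from a direct comparison/rearrangement argument, or from the explicit structure of the 1D optimal configuration given by \Cref{thm-reg}(iii) applied in 1D. (4) Conclude. The main obstacle I anticipate is Step (2): controlling the nonlocal cross term uniformly as $b\to\infty$ and making the "annulus $\to$ strip" degeneration rigorous in the fractional setting, since unlike the local case in \cite{CGI+} one cannot simply separate variables — the operator $(-\De)^s$ does not diagonalize in $(t,\varphi)$ — so one must instead use the decomposition formula plus careful kernel estimates (exploiting $0<s<\2$, which is why this range appears in the hypothesis) to show the angular coupling is a lower-order perturbation. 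A secondary technical point is ensuring the test configuration $D_b$ genuinely satisfies $|D_b|=A$ while remaining compatible with the boundary behavior forced by $C^s$ regularity near $\pa\Om_b$.
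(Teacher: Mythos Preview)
Your overall framework --- assume $D$ radial, build an asymmetric competitor, derive a contradiction --- matches the paper. But your actual strategy diverges from the paper's, and the divergence creates a genuine gap.

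The paper does \emph{not} pass to one-dimensional limit problems and compare limits. Instead it works at finite $b$ with two quantitative claims linked by an \emph{intermediate eigenvalue} $\tau$, defined as the lowest eigenvalue of \eqref{evp-symm} among test functions of the special form $v(r,\theta)=h(r)\sin N\theta$ (so trivially $\si\le\tau$). The competitor is very concrete: let $E_+$ be a sector of angular width $\pi/N$ with $N$ chosen so that $|D|<|\Om_b\sm E_+|$, take $\tilde u$ to be the first Dirichlet eigenfunction of $(-\De)^s$ on $E_+$, and put $\tilde D\subset\Om_b\sm E_+$. Then $\supp\tilde u\cap\tilde D=\emptyset$, so the $\al$-term vanishes entirely and it suffices to prove $\la_1(E_+)<\si$. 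Claim~1 shows $\tau\le\si+O(b^{-1-2s})$ via the extension problem (the radial eigenfunction $h$ is used as test function, and one must control $\int\frac{N^2}{r^2}H^2 y^a$, which is where the exponent $-1-2s$ and the restriction $s<\frac12$ enter). Claim~2 shows $\la_1(E_+)+c\le\tau$ with $c>0$ independent of $b$: the energy inequality of \Cref{lem-symmrelation} gives $\|(-\De)^{s/2}v_0\|^2\ge 2N\|(-\De)^{s/2}(v_0\chi_{E_+})\|^2$ while $\|v_0\|^2=2N\|v_0\chi_{E_+}\|^2$, so the kinetic part of $\tau$ already dominates $\la_1(E_+)$; the uniform gap $c$ then comes entirely from the potential term $\al\int_D v_0^2/\int v_0^2$, bounded below via Harnack (\Cref{lem-alphaqubdd}). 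Combining, $\la_1(E_+)+c\le\si+O(b^{-1-2s})$, and for $b$ large the error is absorbed.

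Your proposal misses this structure in two ways. First, Step~(3) is the heart of your argument and it is not justified: you assert $\mu_1^{\mathrm{conc}}<\mu_1^{\mathrm{spread}}$ as ``a genuinely one-dimensional fact'', but once you have passed to the thin-annulus limit both the radial problem and the sector problem collapse to essentially the same one-dimensional object --- the eigenvalue gap you need is a \emph{finite-$b$} rate comparison ($O(b^{-1-2s})$ angular cost versus a uniform $\al c$ potential gain), and this information is lost in the limit. Second, you misidentify the mechanism: it is not that ``concentrating mass lowers the first eigenvalue more effectively than spreading it''. Rather, concentrating $D$ in a sector allows the test function to have support \emph{disjoint} from $D$, killing the $\al$-term outright; the radial eigenfunction, by contrast, cannot avoid a radial $D$, and \Cref{lem-alphaqubdd} quantifies that its $\al$-contribution is bounded below uniformly in $b$. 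Your product-type test function $f(t)g(\vp)$ is also less convenient than the paper's $h(r)\sin N\theta$, for which the extension keeps the separated form and the sector-restriction energy inequality (\Cref{lem-symmrelation}) is clean.
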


This yields non-uniqueness of the optimal pair since any rotation of a solution without symmetry
generates a new solution. In conclusion, the uniqueness issue $(iii)$ in \ref{pb:N}, is that the
optimal pair is not unique in general, at least for $0<s<\2$, but it is so for some special domains.\\

We close this subsection with some remarks on the free boundary regularity. When $s>1/2$, one can easily obtain that the free boundary near a regular point (i.e. $Du\not=0$) is locally a $C^1$ graph by using the implicit function theorem. To obtain a similar result near a singular point (i.e. $Du=0$), one may need singularity analysis. Nevertheless, if $s\le1/2$, one can not expect $C^1$ regularity near the free boundary, which means that we can not use the implicit function theorem. The expected regularity for free boundary is smooth or analytic, which is left for the future work.

\subsection{Equivalent
problems}
Let us discuss two equivalent formulations of \ref{pb:N}. The first is a local expression via the
well-known Caffarelli-Silvestre extension \cite{CS07}. Let $a=1-2s$ so that $-1<a<1$. For a bounded
domain $\Om \subset \R^n$ with $C^{1,1}$ boundary and numbers $\al >0 $, $A\in [0,|\Om|]$, we consider
the eigenvalue problem with mixed boundary conditions
\begin{equation}\label{EL-E}
\begin{alignedat}{3}
	L_a u&=0 &&\quad \textrm{ in } \R^{n+1}_+,\\
	-M_au+\al \chi_D u &= \la u&&\quad \textrm{ on } \Om\ti\{0\}\subset \pa \R^{n+1}_+,\\
	u&=0&& \quad \textrm{ on } (\R^n\setminus \Om)\ti \{0\}\subset \pa \R^{n+1}_+,
\end{alignedat}
\end{equation}
where $D\subset \Om$ is any measurable subset in $\R^n$, and the operators $L_a$ and $M_a$ are the usual
ones in the extension to $\mathbb R^{n+1}_+$ for the fractional Laplacian and are defined in
\eqref{notation-operators}.

Denote  the first eigenvalue by $\la_\Om(\al,D)$ and define
\begin{align}\label{Lala-E}
	\La_\Om(\al, A)=\inf _{|D|=A} \la_\Om (\al, D).
\end{align}
If $D$ attains the minimum of \eqref{Lala-E}, then we call it as an optimal configuration for the data
$(\Om,\al,A)$ and $(u,D)$ an optimal pair.

\begin{named}{Problem $\mathbf{(E)}$}\label{pb:E}
Investigate the same questions as in \ref{pb:N} for \eqref{EL-E} and \eqref{Lala-E}.
\end{named}

From the well known extension theorem for the fractional Laplacian \cite{CS07} we know that
\begin{align*}
M_a u:= \lim_{y\to 0^+} y^a u_y = -C_{n,s}(-\De)^s\big(u(\cdot,0)\big).
\end{align*}
Thus we can identify \ref{pb:N} and \ref{pb:E}. We shall use this equivalence in
\Cref{sec:symm-breaking} about symmetry breaking phenomena.\\

We will consider two more equivalent forms of \ref{pb:E}. First, defining $v=t-u$, we can consider problem \eqref{eq-v}, which is the extension version of \eqref{obs-int}. This will be useful when we deal with blowups in \Cref{sec:blowups}.

We will also consider the formulation as a physical problem  (\ref{pb:P1} in \Cref{sec:phy}). We will
see that there is a unique constant $\ov\al_\Om(A)$ satisfying $\La_\Om(\ov \al_\Om(A),A)=\ov \al
_\Om(A)$ so that if $\al\le \ov\al_\Om(A)$, then \ref{pb:P1} is equivalent to \ref{pb:N}. From this
relation\textcolor{cyan}{,} we can give the physical interpretation of our problem, which is to optimize the basic frequency of an elastic membrane whose boundary has two parts; one is fixed and another is free but a prescribed mass is
concentrated on the latter. This basic frequency is essentially given by the Steklov eigenvalue of the membrane.

\subsection{Outline}
In \Cref{sec:pre} we introduce notations, definition of weak solutions, and some properties for the
solution to the extension problem. Also, in \Cref{sec:phy} we discuss the physical interpretation so that
\ref{pb:N} contains \ref{pb:P1}, while in \Cref{sec:basic} we prove the existence of optimal pairs and
some regularity results, which yield some of the statements in \Cref{thm-reg}. The non-triviality of blowup limits is
considered in \Cref{sec:blowups} and \Cref{sec:nondeg}, and then we complete the proof of
\Cref{thm-reg} in \Cref{sec:struc}. The proofs of \Cref{thm-symm} and \Cref{cor-ball} are given in
\Cref{sec:symm}. Finally, we discuss the symmetry breaking phenomena in \Cref{sec:symm-breaking} and
hence we prove \Cref{thm-symm-breaking}.

\section{Preliminaries}\label{sec:pre}
\subsection{Notations}
The following notations are used throughout the paper:
\begin{enumerate}
\item Let $|\cdot|$ and $\dd\si$ denote, respectively, Lebesgue measure and the surface measure. The
    outer unit normal vector is denoted by $\nu$.
\item Let $\Om$ be a bounded domain with $C^{1,1}$ boundary unless otherwise specified.
\item Let $\mu_\Om$ be the first eigenvalue in $\Om$ for the fractional Laplacian operator with zero
    Dirichlet boundary conditions on $\Omega^c$.
\item For a constant $1<\ga<2$, the space $C^\ga$ is understood as $C^{1,\ga-1}$.
\item Denote $\mathcal{F}$ by the free boundary $\pa D$. 
\item For a point $X$ in $\R^{n+1}$ we often use $x$ to denote the first $n$ coordinates and $y$ for
    the last coordinate so that $X=(x,y)$.
\item For balls and half balls:
\begin{equation*}
\begin{split}
&B_R(X_0) = \{X\in \R^{n+1}: |X-X_0|< R\},\\
&B_R^+(X_0) = B_R(X_0)\cap \{(x,y)\in\R^{n+1}:y>0\},\\
&\Ga_R^+=\pa B_R\cap \{(x,y)\in\R^{n+1}:y\ge 0\},\\
&\Ga_R^0= \{(x,0)\in\R^{n+1}:|x|<R\}.\\
\end{split}
\end{equation*}
\item We define the operators on $\mathbb R^{n+1}_+$, for $a=1-2s$,
\begin{equation}\label{notation-operators}
\begin{split}
&L_av:= \dv(y^a \D v),\\
&M_av:=\lim_{y\ra 0} y^av_y.
\end{split}
\end{equation}
\item The weighted Lebesgue space $L^2(B,y^a)$ is a Banach space with the norm
\begin{align*}
\norm{f}_{L^2(B,y^a)}=\(\int_B |f(x)|^2y^a \dd X <\infty\)^{1/2}.
\end{align*}
The weighted Sobolev space $H^1(B,y^a)$ is defined similarly. If $a=0$, the spaces  $L^2(B)$
and $H^1(B)$ are defined in the usual way.
\end{enumerate}

\subsection{Fractional spaces and weak solutions}
Let $0<s<1$, and $\Om\su \R^n$ be a domain. The fractional Laplacian $(-\De)^s $ on
$\mathbb R^n$ is defined as
\begin{align*}
(-\De)^s u(x)= c_{n,s} \int _{\R^n} \fr{u(x)-u(y)}{|x-y|^{n+2s}}\dd y,
\end{align*}
where $c_{n,s}$ is a normalization constant. We also introduce the classical fractional Sobolev space $H^s(\Om)$ defined by
\begin{align*}
H^s(\Om)= \left\{ u \in L^2 (\Om): \fr{u(x)-u(y)}{|x-y|^{\fr{n+2s}{2}}}\in L^2(\Om\ti
\Om)\right\},
\end{align*}
endowed with the norm
\begin{align*}
\norm{u}_{H^s(\Om)}= \norm{u}_{L^2(\Om)}+ \left(\int_{\Om\ti\Om}
\fr{|u(x)-u(y)|^2}{|x-y|^{n+2s}}\dd x \dd y\right)^\2.
\end{align*}
  The localized version of $H^s(\Om)$
is denoted by
\begin{align*}
H^{s}_{loc}(\Om)= \left\{ u\in L^2(\Om): u\eta \in H^s(\Om)\trm{ for any test function }\eta\in \mathcal{D}(\Om) \right\},
\end{align*}
where $\mathcal{D}(\Om)$ denotes the space of all continuously infinitely differentiable functions with compact support in $\Om$.

The admissible set for  weak solutions for our non-local equation is given by
\begin{align*}
H^s_0(\Om) = \left\{ u \in H^s(\R^n) : u \equiv 0 \trm{ in } \R^n\sm \Om\right\}.
\end{align*}
Weak solutions are then defined as follows: for any bounded function $\rho: \R^n \ra \R$, a function $u$ is called a weak solution of
\begin{equation*}
\begin{split}
(-\De)^s u +\rho u &=0 \qu \trm{in } \Om,\\
u &= 0 \qu \trm{in } \R^n\sm \Om
\end{split}
\end{equation*}
 if $u\in H^s_0(\Om)$, and
\begin{align*}
\fr{c_{n,s}}{2}\int_{\R^n\ti \R^n}\fr{(u(x)-u(y))(\vp(x)-\vp(y))}{|x-y|^{n+2s}} \dd x \dd y + \int_{\Om}
\rho(x)u(x)\vp(x) \dd x=0.
\end{align*}
for any $\vp \in  H^s_0(\Om)$.

\subsection{Extension problem}

In this subsection, we list some properties for the extended function from $\R^n$ to $\R^{n+1}_+$.

We recall the well-known Caffarelli-Silvestre extension (see \cite{CS07} and also \cite{CS14}). Given a function $u=u(x)$ on $\mathbb R^n$, its extension to $\mathbb R^{n+1}_+$ (still denoted by the same letter $u=u(x,y)$)  is the solution to
\begin{equation}\label{CS-extension}
\begin{split}
L_a u&=\De _x u+\fr{a}{y}u_y+u_{yy}=0 \quad \trm{in} \quad \R^{n+1}_+ ,\\
u(x,0)&=u(x) \quad \trm{on} \quad \R^n.
\end{split}
\end{equation}
The extended function can be also written as
\begin{align*}
u(x,y)= (P(\cdot ,y) * u)(x),
\end{align*}
where $P$ is the Poisson kernel
\begin{align}\label{Poisson}
P(x,y)=C_{n,a}\fr{y^{1-a}}{(|x|^2+y^2)^{\fr{n+1-a}{2}}},
\end{align}
with the constant $C_{n,a}$ chosen such that $\int_{\R^n} P(x,y)\dd x=1$. It is well known that
\begin{equation*}
(-\Delta)^s u(x)=d_s M_a u \qu \trm{in } \R^n=\pa \R^{n+1}_+,
\end{equation*}
where $d_s=2^{2s-1}\Ga(s)/\Ga(1-s)$.
Next, we give the definition of (localized) weak solutions for the extension problem:

\begin{definition}[Weak solutions]\label{def-weak-local}
Let $-1<a<1$, $r>0$, and $h\in L^1(\Ga_r^0)$. A function $u:B_{r}^+\ra \R$ is a weak solution of
\begin{align*}
L_a u &=0 \qu \trm{in } B_r^+, \\
 M_au&=h \qu \trm{on } \Ga_r^0,
\end{align*}
if $|\D u |^2 y^a \in L^1(B_r^+)$ and
\begin{align*}
\int_{B_r^+} (\D u \cdot \D \vp) y^a \dd X + \int _{\Ga_r^0} h \vp \dd x=0
\end{align*}
for all $\vp \in C^1(\ov B_r^+)$ such that $\vp \equiv 0$ on $\Ga_r^+$.
\end{definition}

The next Lemma from \cite[Theorem 6.4]{ALP} will give us the optimal regularity when $a\not =0$. (see
also \cite[Theorem 2.11]{AG}).

\begin{lemma}[Optimal regularity for $a\not=0$]\label{lem-optreg}
Let $a\not=0$ and $v\in W^{1,2}(B_1^+,y^a)$ be a bounded weak solution of
\begin{align*}
L_a v &=0 \qu \trm{in } B_1^+,\\
M_a v &= h \qu \trm{on } \Ga_1^0.
\end{align*}
If $h\in L^\infty(\Ga_{1}^0)$, then $v\in C^{1-a}(\ov B_{1/2}^+)$. Moreover, we have
\begin{align*}
\norm {v} _{C^{1-a}(\ov B_{1/2}^+)} \le C(\norm{v}_{L^\infty(B_1^+,y^a)} + \norm {h} _{L^\infty
(\Ga_{1}^0)}),
\end{align*}
where the constant $C$ depends only on $n$ and $a$.
\end{lemma}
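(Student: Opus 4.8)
The plan is to prove this as a consequence of the explicit Poisson-kernel representation together with the conformal-type reflection that turns the Neumann-type condition $M_a v = h$ into an interior equation. First I would reduce to the case $h=0$ by subtracting off a particular solution: pick a bounded extension $\tilde h$ of $h$ to $\Ga_1^0$ with $\norm{\tilde h}_{L^\infty}\lesssim \norm{h}_{L^\infty(\Ga_1^0)}$ and solve $L_a w = 0$ in $\R^{n+1}_+$ with $M_a w = \tilde h$ on $\R^n$ via the Riesz-type potential $w(x,y) = c\int_{\R^n}\Gamma_a(x-z,y)\tilde h(z)\dd z$, where $\Gamma_a$ is the fundamental solution associated to $L_a$ with the appropriate normalization. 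A direct estimate on $\Gamma_a$ (it behaves like $|X|^{-(n-1+a)}$, and $\partial_y \Gamma_a$, being degree one more homogeneous in the $y$ variable against the weight, is integrable enough) gives $w\in C^{1-a}_{loc}$ with the stated bound; this is exactly the computation in \cite{ALP,AG} and I would simply cite it. Replacing $v$ by $v-w$ we may then assume $M_a v = 0$ on $\Ga_1^0$.

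Next, with $M_a v = 0$, I would use the even reflection $v(x,-y):=v(x,y)$ across $\{y=0\}$, which produces a weak solution of $L_a v = 0$ in the full ball $B_1$ (with $|y|^a$ in place of $y^a$), since the conormal derivative jump vanishes. Now the problem is an interior regularity statement for a degenerate elliptic equation with an $A_2$-Muckenhoupt weight $|y|^a$. For such equations, the Fabes–Kenig–Serapioni theory gives local Hölder continuity of bounded weak solutions; to get the \emph{sharp} exponent $1-a$ I would instead invoke the explicit structure: $v$ is a solution to an equation whose homogeneous solutions include $1$ and $|y|^{1-a}\operatorname{sgn}(y)$ in one variable, and a Liouville/blow-up argument — rescalings $v_r(X)=(v(rX)-v(0))/r^{1-a}$ are uniformly bounded by a Caccioppoli-plus-De Giorgi iteration, and any entire bounded-growth limit must be affine in $x$ plus a multiple of $|y|^{1-a}$ hence zero at rate $1-a$ — yields $[v]_{C^{1-a}(B_{1/2}^+)}\lesssim \norm{v}_{L^\infty(B_1^+,y^a)}$. (Equivalently, one can differentiate the Poisson formula: $v(x,y) = (P(\cdot,y)*v(\cdot,0))(x)$ with $P$ as in \eqref{Poisson}, and the modulus of continuity of $v$ in $y$ near $y=0$ is governed by $\int_{\R^n}|P(x,y)-\delta_0|\,|x|\dd x \sim y^{1-a}$ once the tangential derivative $\D_x v(\cdot,0)$ is known to be bounded from the $h=0$ boundary regularity on $\Ga_{3/4}^0$.)

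The main obstacle is the sharpness of the exponent $1-a$ rather than mere Hölder continuity: the FKS machinery alone gives some unquantified $C^\alpha$, so the delicate point is the blow-up classification — showing that the only bounded (or sublinearly growing) global $L_a$-harmonic functions that are even in $y$ are affine in $x$, which forces the $|y|^{1-a}$ correction to appear at precisely that order. This requires a Liouville theorem for $L_a$ in $\R^{n+1}$ with the weight $|y|^a$; I would obtain it from the Poisson representation on half-spaces together with the Harnack inequality of \cite{CS07,CS14} for $L_a$. Once that is in hand, the constant's dependence on $n$ and $a$ only (and not on $v$ or $h$) follows from the scaling invariance of each step, and combining with the reduction step gives the full estimate $\norm{v}_{C^{1-a}(\ov B_{1/2}^+)}\le C(\norm{v}_{L^\infty(B_1^+,y^a)}+\norm{h}_{L^\infty(\Ga_1^0)})$.
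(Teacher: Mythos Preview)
The paper does not give its own proof of this lemma: it is stated as a quotation of \cite[Theorem~6.4]{ALP} (with a pointer also to \cite[Theorem~2.11]{AG}), and no argument is supplied. So there is nothing in the paper to compare your proposal against line by line.

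That said, your outline is broadly in the spirit of how the cited references obtain the result: reduce to the homogeneous boundary condition by subtracting an explicit particular solution built from the fundamental solution of $L_a$, reflect evenly across $\{y=0\}$ to obtain an interior problem for the $A_2$-weighted operator, and then upgrade the generic H\"older exponent coming from Fabes--Kenig--Serapioni to the sharp exponent $1-a$ via a blow-up/Liouville classification. The one place where your sketch is thin is exactly the step you flag yourself: the Liouville theorem that pins down the exponent. Your proposed route (``bounded entire even $L_a$-harmonic functions are affine in $x$'') is correct in spirit, but to make it a proof you need a quantitative growth bound on the rescalings $v_r$ \emph{before} passing to the limit, and that bound is precisely what the sharp regularity would give --- so as written the argument risks circularity. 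In \cite{ALP} this is handled by an Almgren-type monotonicity/frequency argument (or an explicit barrier comparison) that controls the growth independently; if you want a self-contained proof you should insert such a device at that point rather than rely on the De~Giorgi iteration alone, which only yields some unspecified $C^\alpha$.
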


We also recall the regularity result when $a=0$ from \cite[Section 5]{ALP}.

\begin{lemma}[Regularity for $a=0$]\label{lem-optreg0}
Let $v\in W^{1,2}(B_1^+)$ be a bounded weak solution of
\begin{align*}
\De v &=0 \qu \trm{in } B_1^+,\\
\pa_y v &= h \qu \trm{on } \Ga_1^0.
\end{align*}
If $h\in L^\infty(\Ga_{1}^0)$, then $v\in C^{\ga}(\ov B_{1/2}^+)$ for any $0<\ga<1$. Moreover, we have
\begin{align*}
\norm {v} _{C^{\ga}(\ov B_{1/2}^+)} \le C(\norm{v}_{L^\infty(B_1^+)} + \norm {h} _{L^\infty (\Ga_{1}^0)}),
\end{align*}
where the constant $C$ depends only on $n$ and $a$.\end{lemma}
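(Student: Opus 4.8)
The plan is to deduce the estimate from a decay estimate for the oscillation of $v$ on half-balls centered on the flat portion $\Ga_{1/2}^0$, since in the interior $B_1^+\sm\{y=0\}$ the function $v$ is harmonic and hence smooth, with bounds controlled by its oscillation. Write $M:=\norm{v}_{L^\infty(B_1^+)}+\norm{h}_{L^\infty(\Ga_1^0)}$; all constants below depend only on $n$, unless a dependence on $\ga$ is indicated. First I would prove
\[
\operatorname{osc}_{B_r^+(z)}v\ \le\ C(n,\ga)\,r^{\ga}M\qquad\text{for all } z\in\Ga_{1/2}^0,\ 0<r\le\tfrac12 ,
\]
and then obtain $v\in C^{\ga}(\ov B_{1/2}^+)$ with the stated bound by a routine chaining argument: for $X_1,X_2\in\ov B_{1/2}^+$ with $d:=|X_1-X_2|$ small, if $\min(y_1,y_2)\ge 2d$ one bounds $|v(X_1)-v(X_2)|$ by the segment integral of $\D v$, estimated through the interior harmonic gradient bound on balls $B_{y/2}(X)\su B_1^+$ whose right-hand side is in turn controlled by the displayed oscillation estimate applied at the vertical projection of $X$ onto $\{y=0\}$; and if $\min(y_1,y_2)<2d$ one projects the lower point onto $\Ga_{1/2}^0$ and applies the displayed estimate directly on a ball of radius $\le 3d$.

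To prove the oscillation estimate I would run a Campanato-type iteration by harmonic approximation at dyadic scales. Fix $z\in\Ga_{1/2}^0$ and, for $0<\rho\le\tfrac12$, let $w_\rho$ solve $L_0 w_\rho=0$ in $B_\rho^+(z)$ with $M_a w_\rho=0$ on the flat part $B_\rho(z)\cap\{y=0\}$ and $w_\rho=v$ on the spherical part of $\pa B_\rho^+(z)$; then $w_\rho$ extends by even reflection across $\{y=0\}$ to a harmonic function on the full ball $B_\rho(z)$ with $\norm{w_\rho}_{L^\infty(B_\rho(z))}\le\norm{v}_{L^\infty(B_1^+)}$, so the interior gradient estimate yields $\operatorname{osc}_{B_{\theta\rho}^+(z)}w_\rho\le C_0\theta\operatorname{osc}_{B_\rho^+(z)}w_\rho$ for $0<\theta\le\tfrac12$. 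For the remainder $\zeta:=v-w_\rho$, which is a weak solution of $L_0\zeta=0$ in $B_\rho^+(z)$ with $M_a\zeta=h$ on the flat part and $\zeta=0$ on the spherical part, I would use the explicit supersolution $\psi(x,y):=1-y-\tfrac12|x|^2$ on $B_1^+$, for which $L_0\psi=-n\le 0$, $M_a\psi=-1$ on $\Ga_1^0$ and $\psi=\tfrac12(1-y)^2\ge 0$ on $\Ga_1^+$: translating and rescaling it to $\bar\zeta(X):=\rho\norm{h}_{L^\infty}\psi\big((X-z)/\rho\big)$ and comparing $\bar\zeta$ with $\pm\zeta$ through the weak maximum principle — testing the weak formulation of $L_0(\bar\zeta\mp\zeta)\le 0$ against the truncation $(\bar\zeta\mp\zeta)^-$, which vanishes on the spherical part — gives $\norm{v-w_\rho}_{L^\infty(B_\rho^+(z))}\le C_1\rho\norm{h}_{L^\infty}$. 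Since $\operatorname{osc} v$ is unchanged by adding constants, combining the two bounds yields, with $\omega(\rho):=\operatorname{osc}_{B_\rho^+(z)}v$,
\[
\omega(\theta\rho)\ \le\ C_0\theta\,\omega(\rho)+C_1\rho\norm{h}_{L^\infty},\qquad 0<\rho\le\tfrac12,\ 0<\theta\le\tfrac12 .
\]

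Finally, given $\ga\in(0,1)$ I would pick $\theta=\theta(n,\ga)\in(0,\tfrac12]$ small enough that $C_0\theta\le\theta^{\ga}$ and iterate the displayed recursion over $\rho=\theta^k/2$ by the standard iteration lemma, which produces $\omega(r)\le C(n,\ga)\,r^{\ga}M$ for $0<r\le\tfrac12$ and closes the argument. The step that really dictates the outcome — rather than a genuine difficulty — is the harmonic approximation estimate for $\zeta$: because $h$ is merely $L^\infty$, its contribution to the recursion scales like $\rho^{1}$ and no better, which is exactly why the conclusion is $C^{\ga}$ for every $\ga<1$ but cannot be upgraded to $C^{0,1}$ up to $\Ga^0$, and why the case $a=0$ must be treated separately from the case $a\ne 0$ of Lemma \ref{lem-optreg}, where the endpoint $C^{1-a}$ is attained. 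A secondary, purely technical point to watch is that every comparison must be carried out through the weak formulation of Definition \ref{def-weak-local}, i.e. against test functions vanishing on the spherical part, with the sign of the Neumann term entering the maximum principle correctly; this is taken care of by the truncation test functions indicated above.
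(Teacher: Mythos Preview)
Your argument is correct. The harmonic replacement $w_\rho$ (obtained by evenly reflecting the boundary data and solving the Dirichlet problem on the full ball) together with the explicit barrier $\psi(x,y)=1-y-\tfrac12|x|^2$ indeed yields $\|v-w_\rho\|_{L^\infty(B_\rho^+(z))}\le C\rho\|h\|_{L^\infty}$; the sign check for the truncation test function $(\bar\zeta\mp\zeta)^-$ works as you describe once one integrates by parts on the smooth $\bar\zeta$-part and uses the weak formulation for $\zeta$. The resulting one-step improvement $\omega(\theta\rho)\le C_0\theta\,\omega(\rho)+C_1\rho\|h\|_{L^\infty}$ then feeds into the standard iteration lemma to give the oscillation decay, and your boundary--to--interior chaining is routine.

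It is worth pointing out that the paper does \emph{not} prove this lemma: it is quoted verbatim from \cite[Section 5]{ALP} (see the sentence preceding the statement). Your proposal therefore supplies a self-contained proof where the paper relies on a citation. The approach in \cite{ALP} is somewhat different in packaging---it proceeds via almost-monotonicity/Almgren-type tools developed there for the degenerate operator $L_a$ and then specializes---whereas your route is the classical ``compare with the harmonic extension of the boundary trace and use a barrier for the Neumann remainder'' argument, tailored to $a=0$. Your method is more elementary and makes the mechanism behind the loss of the Lipschitz endpoint transparent (the $O(\rho)$ Neumann error cannot beat $\rho^1$); the cited approach has the advantage of treating all $a\in(-1,1)$ in a unified framework, which is why the paper pairs it with Lemma~\ref{lem-optreg}.
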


The following Liouville type theorem from \cite[Lemma 2.7]{CSS} will be used in \Cref{sec:blowups}.

\begin{lemma}[Liouville type theorem]\label{lem-liouville}
Let $v$ be a harmonic function in $\R^{n+1}$ such that $v(x,y)=v(x,-y)$ for all $x\in \R^n$ and $y\in
\R$. If $v$ has a polynomial growth, i.e.
\begin{align*}
|v(X)|\le C(1+|X|^k)
\end{align*} for some constant $C$ and degree $k$, then $v$ is a polynomial of degree at most $k$.
\end{lemma}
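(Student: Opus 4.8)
The plan is to deduce this from the classical interior derivative estimates for harmonic functions together with the polynomial growth bound: they force all sufficiently high-order derivatives of $v$ to vanish identically, and a smooth function with this property is a polynomial. The even symmetry $v(x,y)=v(x,-y)$ plays no essential role in the argument — it only records afterwards that the resulting polynomial is even in $y$ — so I would simply treat $v$ as a harmonic function on all of $\R^{n+1}$ with the stated growth.

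First I would recall the standard interior estimate: if $w$ is harmonic in a ball $B_R(X_0)\subset\R^{n+1}$, then for every multi-index $\beta$,
\[
|\partial^\beta w(X_0)|\le \frac{C_{n,|\beta|}}{R^{|\beta|}}\,\sup_{B_R(X_0)}|w|,
\]
which follows by iterating the mean value property (equivalently, by differentiating the Poisson kernel on $B_R(X_0)$). Since $\partial^\beta v$ is again harmonic on all of $\R^{n+1}$, this applies to $w=v$ at every center $X_0$ and every radius $R>0$. Now fix a multi-index $\beta$ with $|\beta|=\lfloor k\rfloor+1$, so that $|\beta|>k$. For a fixed $X_0$ and any $R>0$, the growth hypothesis gives $\sup_{B_R(X_0)}|v|\le C\big(1+(|X_0|+R)^k\big)$, hence
\[
|\partial^\beta v(X_0)|\le \frac{C_{n,|\beta|}}{R^{|\beta|}}\,C\big(1+(|X_0|+R)^k\big)\longrightarrow 0 \quad\text{as } R\to\infty,
\]
because $|\beta|>k$. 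Therefore $\partial^\beta v(X_0)=0$; since $X_0$ and $\beta$ (of order $\lfloor k\rfloor+1$) are arbitrary, every partial derivative of $v$ of order $\lfloor k\rfloor+1$ vanishes identically. By Taylor's theorem $v$ then coincides with its Taylor polynomial of degree $\lfloor k\rfloor$ at any base point, so $v$ is a polynomial of degree at most $\lfloor k\rfloor\le k$, which is the claim; the symmetry then forces this polynomial to involve only even powers of $y$.

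There is no genuine obstacle here. The only points that need a little care are (a) stating the derivative estimate with the correct scaling $R^{-|\beta|}$, so that the extra power exactly beats the growth once $|\beta|>k$, and (b) allowing $k$ to be non-integer, which is handled by taking $|\beta|=\lfloor k\rfloor+1$. An alternative route is to rescale: the functions $v_R(X):=R^{-k}v(RX)$ are harmonic with a uniform-in-$R$ growth bound, and a compactness argument identifies their limit; but the derivative-estimate proof above is the most direct and self-contained.
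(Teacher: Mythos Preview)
Your proof is correct: it is the classical Liouville argument via the interior derivative estimates for harmonic functions, and the growth bound together with $|\beta|>k$ indeed forces all derivatives of order $\lfloor k\rfloor+1$ to vanish, so $v$ is a polynomial of degree at most $\lfloor k\rfloor\le k$. You are also right that the even symmetry in $y$ is not needed for the conclusion as stated; it only tells you afterwards that the polynomial is even in $y$.

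The paper itself does not give a proof of this lemma: it simply quotes it from \cite[Lemma~2.7]{CSS}. So there is no argument in the paper to compare against, and your self-contained proof is entirely adequate. (The reason the symmetry hypothesis appears in the cited reference is that the result there is formulated in the setting of the degenerate operator $L_a$ on $\R^{n+1}_+$, where one first reflects evenly across $\{y=0\}$ to obtain a solution on all of $\R^{n+1}$; in the present case $a=0$ the reflected function is genuinely harmonic and your argument applies directly.)
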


\section{Physical Interpretation}\label{sec:phy}
In this section we shall show the physical interpretation of problem  \ref{pb:N} in terms of the basic
fractional frequency. Fractional frequency is better understood when $s=\2$, since it is related to the
classical Steklov eigenvalue problem.

The local case ($s=1$) for the composite membrane problem has been well studied (see \cite{CGI+} and
related references). Our non-local optimization question is linked to the following:

\begin{named}{Problem $\mathbf{(P_N)}$}\label{pb:P1}
Build a body of prescribed shape out of given materials of varying density, in such a way that the body
has prescribed mass and so that the basic fractional frequency (with fixed boundary) is as small as
possible.
\end{named}

To give the exact mathematical formulation of this problem, we define the class of admissible densities
by
\begin{align*}
\mathcal{P}= \left\{ \rho : \Om \ra [h,H] : \int_\Om \rho(x) \dd x= M \right\},
\end{align*}
where $h$, $H$, and $M$ are the given constants satisfying $0\le h < H$, $0<M\in [h|\Om|, H |\Om|]$.
Then \ref{pb:P1} is to find a density $\rho$ and a body $u$ which achieve the double infimum in
\begin{align}\label{PN-inf}
\Theta (h,H,M) := \inf_{\rho \in \cP} \inf_{u\in \cH\sm \{0\}} \fr{\fr{c_{n,s}}{2}\int_{\R^n\ti
\R^n}\fr{|u(x)-u(y)|^2}{|x-y|^{n+2s}}\dd x \dd y}{\int_\Om \rho(x) u^2(x)\dd x}.
\end{align}
The associated Euler-Lagrange equation is
\begin{equation}\label{EL-PN}
\begin{split}
(-\De)^s u&= \Theta \rho u \qu \trm{in } \Om,\\
u&= 0 \qu \trm{in } \R^n\sm \Om.
\end{split}
\end{equation}
Moreover, $u$ has a sign in $\Om$ if it is not a constant function.
\begin{lemma}\label{lem-sign}
Let $u$ be a function achieving the infimum in \eqref{PN-inf}. Then $u$ has a sign, i.e., either $u>0$
in $\Om$ or $u<0$ in $\Om$ holds.
\end{lemma}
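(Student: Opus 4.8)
The plan is to argue by contradiction: suppose a minimizer $u$ changes sign. The first observation is that the Rayleigh quotient in \eqref{PN-inf} only ever decreases when $u$ is replaced by $|u|$, since the Gagliardo seminorm satisfies $||u(x)|-|u(y)||\le |u(x)-u(y)|$ pointwise, hence
\begin{align*}
\int_{\R^n\ti\R^n}\fr{||u(x)|-|u(y)||^2}{|x-y|^{n+2s}}\dd x\dd y \le \int_{\R^n\ti\R^n}\fr{|u(x)-u(y)|^2}{|x-y|^{n+2s}}\dd x\dd y,
\end{align*}
while the denominator $\int_\Om \rho u^2$ is unchanged and $|u|\in\cH$ (it still vanishes outside $\Om$ and lies in $H^s$). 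Therefore $|u|$ is also a minimizer, and in particular solves the Euler-Lagrange equation \eqref{EL-PN} with the same $\Theta$ and the same $\rho$; moreover, comparing the two seminorms forces equality above, which already says $u$ and $-u$ cannot both genuinely contribute, but the clean way to finish is via the equation for $|u|$.

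Next I would apply a strong maximum principle for the fractional Laplacian to $w:=|u|\ge 0$. Since $w$ solves $(-\De)^s w = \Theta\rho w$ in $\Om$ with $w\ge 0$ everywhere in $\R^n$ and $w\not\equiv 0$, the strong maximum principle gives $w>0$ in $\Om$: indeed if $w(x_0)=0$ at some interior point $x_0$, then from the defining integral $(-\De)^s w(x_0)=c_{n,s}\int_{\R^n}\fr{w(x_0)-w(y)}{|x_0-y|^{n+2s}}\dd y = -c_{n,s}\int_{\R^n}\fr{w(y)}{|x_0-y|^{n+2s}}\dd y \le 0$, with equality only if $w\equiv 0$, contradicting $(-\De)^s w(x_0)=\Theta\rho(x_0)w(x_0)=0$ together with $w\not\equiv 0$. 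Hence $|u|>0$ in $\Om$, i.e. $u$ never vanishes in $\Om$; being continuous (or at least, having a representative that is continuous, which follows from the regularity theory once $\rho u\in L^\infty$, cf. \Cref{lem-optreg}--\Cref{lem-optreg0} via the extension), $u$ cannot change sign without a zero, so $u$ has a sign.

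There is one subtlety to be careful about: the argument above implicitly uses that $w=|u|$ is a \emph{genuine} (classical or at least distributional) solution, and that the strong maximum principle applies to it, which requires $w$ to be, say, continuous and the pointwise integral defining $(-\De)^s w$ to make sense at interior points. I would handle this by first noting that $\rho$ is bounded, so any minimizer $u$ lies in $L^\infty(\Om)$ (by a De Giorgi--Nash--Moser / Stampacchia-type iteration for the fractional equation, or simply by citing the extension regularity Lemmas \ref{lem-optreg}--\ref{lem-optreg0} together with the characterization $(-\De)^s u = d_s M_a u$), hence $u$ is continuous in $\Om$, and then $w=|u|$ is continuous and is a weak solution of the same equation with $\rho w \in L^\infty$. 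The strong maximum principle in the form needed is standard for $(-\De)^s$ (e.g. it follows from the nonlocal character of the operator as in the displayed computation, or from the Harnack inequality for the extension problem). \textbf{The main obstacle} is thus not conceptual but bookkeeping: making sure the regularity needed to invoke pointwise evaluation and the strong maximum principle for $w=|u|$ is in place before running the maximum-principle step; once that is granted, the proof is short.
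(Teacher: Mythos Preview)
Your proof is correct and shares the same two pillars as the paper's argument: (i) the Gagliardo seminorm does not increase under a ``rearrangement'' of $u$, and (ii) the strong maximum principle for $(-\De)^s$. The execution differs slightly: the paper splits $u=u_+-u_-$ and uses the pointwise inequality
\[
|u(x)-u(y)|^2 \ge |u_+(x)-u_+(y)|^2 + |u_-(x)-u_-(y)|^2,
\]
whose equality case (forced by minimality) immediately yields $u_+\equiv 0$ or $u_-\equiv 0$, after which the strong maximum principle is applied to $u$ itself. Your route instead passes to $|u|$, applies the strong maximum principle to the nonnegative minimizer $|u|$, and then invokes continuity of $u$ to rule out a sign change. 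The paper's version is marginally cleaner in that it avoids the final appeal to interior continuity of $u$ (the equality case in the integrated inequality is a measure-theoretic statement), whereas your version has the advantage of being the most direct ``$|u|$ is also a minimizer'' argument; both are standard and equally valid here.
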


\begin{proof}
Since \eqref{PN-inf} is invariant under the constant multiplication, we may assume that $\int_\Om
\rho(x) u^2(x)\dd x =1$.
Now we consider the positive part $u_+=\max\{u,0\}$ and the negative part $u_-=\max\{-u,0\}$, and define
\begin{align*}
J_+= \int_\Om \rho(x) u_+^2(x)\dd x \qu \trm{and} \qu J_-= \int_\Om \rho(x) u_-^2(x)\dd x
\end{align*}
so that $J_++J_-=1$. Observe that
\begin{equation}
\begin{split}\label{ineq-sign}
|u(x)-u(y)|^2 
&\ge |u_+(x)-u_+(y)|^2+|u_-(x)-u_-(y)|^2,
\end{split}
\end{equation}
which yields
\begin{align*}
\Theta(h,H,M)&\geq\fr{c_{n,s}}{2}\int_{\R^n\ti
\R^n}\fr{|u_+(x)-u_+(y)|^2}{|x-y|^{n+2s}}+\fr{c_{n,s}}{2}\int_{\R^n\ti
\R^n}\fr{|u_-(x)-u_-(y)|^2}{|x-y|^{n+2s}}\\
&\ge \Theta(h,H,M) J_++\Theta(h,H,M) J_- = \Theta(h,H,M).
\end{align*}
Therefore the inequality in \eqref{ineq-sign} should be an equality, that is, either $u_- \equiv 0$ or
$u_+\equiv 0$ holds. To finish the proof let us assume, without loss of generality, that the first case
occurs  so that $u\ge0$ in $\Om$. By applying the strong maximum principle (e.g. \cite[Theorem
2.3.3]{BV}) to \eqref{EL-PN}, we obtain $u>0$ in $\Om$. For the second case, we consider $-u$ instead of
$u$.
\end{proof}

To see that \ref{pb:P1} is contained in \ref{pb:N}, we need the following density representation Lemma, which is essentially the ``bathtub principle" (see Section 1.14 in \cite{LL}):
\begin{lemma}\label{density representation}
Assume that $(u,\rho)$ is a minimizer for \ref{pb:P1}. For a set $D$ such that $\{u <t \} \su D \su
\{u\le t\} $ where $ t:=\sup\{c:|\{u<c\}|<A\}$, let us define $\rho_D= h\chi_D +H \chi_{D^c}$. Then
$(u,\rho_D)$ is also a minimizer for \ref{pb:P1}.
\end{lemma}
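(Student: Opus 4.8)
The plan is to use the bathtub principle to show that replacing $\rho$ by $\rho_D$ does not increase the Rayleigh quotient in \eqref{PN-inf}, hence $(u,\rho_D)$ is still a minimizer. Since the numerator of the Rayleigh quotient depends only on $u$ (not on the density), it suffices to show that $\int_\Om \rho_D(x) u^2(x)\dd x \ge \int_\Om \rho(x) u^2(x)\dd x$ for \emph{every} admissible $\rho \in \cP$; combined with minimality this forces equality and hence that $(u,\rho_D)$ achieves the same (minimal) value. Note one must first check that $\rho_D$ is indeed admissible, i.e. $\rho_D \in \cP$: clearly $\rho_D$ takes values in $\{h,H\}\subset [h,H]$, and the mass constraint $\int_\Om \rho_D = h|D| + H(|\Om|-|D|) = M$ holds precisely because of the correspondence between $A = |D|$ and $M$ coming from the relation between \ref{pb:P1} and \ref{pb:N} (the constraint $\int_\Om \rho = M$ with $\rho \in \cP$ is equivalent to $|D| = A$ for $\rho = h\chi_D + H\chi_{D^c}$, where $A = (H|\Om| - M)/(H-h)$).

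The heart of the matter is the bathtub principle. I would argue as follows. Write $w = -u^2 \le 0$; then $\int_\Om \rho u^2 = -\int_\Om \rho w$, so minimizing $\int_\Om \rho w$ over $\rho \in \cP$ is equivalent to maximizing $\int_\Om \rho u^2$. The bathtub principle (Lieb--Loss, \cite{LL}, Section 1.14) states that among all measurable $\rho$ with $h \le \rho \le H$ and $\int_\Om \rho = M$, the integral $\int_\Om \rho u^2$ is maximized exactly by densities that equal $H$ on a superlevel set $\{u^2 > \tau\}$, equal $h$ on $\{u^2 < \tau\}$, and take intermediate values only on the level set $\{u^2 = \tau\}$, where $\tau$ is chosen so the mass constraint is met. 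Since by Lemma \ref{lem-sign} we may assume $u > 0$ in $\Om$, the set $\{u^2 > \tau\}$ coincides with $\{u > \sqrt\tau\}$ and $\{u^2 < \tau\}$ with $\{u < \sqrt\tau\}$; choosing $t = \sqrt\tau$ (which is exactly $t = \sup\{c : |\{u<c\}| < A\}$ after matching $A$ with $M$ as above), the maximizing density is $H$ on $\{u > t\} = D^c$ up to the level set, and $h$ on $\{u < t\} \subset D$. Thus $\rho_D = h\chi_D + H\chi_{D^c}$ is a maximizer of $\int_\Om \rho u^2$ over $\cP$, i.e. $\int_\Om \rho_D u^2 \ge \int_\Om \rho u^2$ for all $\rho \in \cP$.

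Putting this together: for the given minimizer $(u,\rho)$,
\begin{align*}
\Theta(h,H,M) = \frac{\frac{c_{n,s}}{2}\int_{\R^n\ti\R^n}\frac{|u(x)-u(y)|^2}{|x-y|^{n+2s}}\dd x\dd y}{\int_\Om \rho(x)u^2(x)\dd x} \ge \frac{\frac{c_{n,s}}{2}\int_{\R^n\ti\R^n}\frac{|u(x)-u(y)|^2}{|x-y|^{n+2s}}\dd x\dd y}{\int_\Om \rho_D(x)u^2(x)\dd x} \ge \Theta(h,H,M),
\end{align*}
where the first inequality is the bathtub estimate and the second is the definition of $\Theta$ as an infimum (using $\rho_D \in \cP$ and $u \in \cH$). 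Hence all inequalities are equalities, $\int_\Om \rho_D u^2 = \int_\Om \rho u^2$, and the Rayleigh quotient of $(u,\rho_D)$ equals $\Theta(h,H,M)$, so $(u,\rho_D)$ is a minimizer for \ref{pb:P1}.

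The one point requiring genuine care — the main obstacle — is the behaviour on the level set $\{u = t\}$. The bathtub principle only pins down a maximizing density up to its values on $\{u^2 = \tau\}$, so one must verify that \emph{any} choice of $D$ with $\{u < t\} \subset D \subset \{u \le t\}$ gives a density $\rho_D$ that is still a maximizer. This is immediate once one observes that $u^2$ is constant (equal to $t^2$) on that level set, so redistributing mass within $\{u = t\}$ does not change $\int_\Om \rho u^2$; hence every such $\rho_D$ yields the same value $\int_\Om \rho_D u^2 = \sup_{\rho\in\cP}\int_\Om \rho u^2$. (If $|\{u = t\}| = 0$ this subtlety disappears entirely, but we do not need that here.) A secondary bookkeeping point is to make the identification $A \leftrightarrow M$ precise so that the mass constraint and the definition of $t$ are consistent between the two formulations; this is routine linear algebra in the constants $h, H, |\Om|$.
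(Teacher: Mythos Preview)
Your proof is correct and takes essentially the same approach as the paper: both establish $\int_\Om \rho_D\, u^2 \ge \int_\Om \rho\, u^2$ via the bathtub principle (you cite Lieb--Loss as a black box, the paper writes out the elementary three-part decomposition over $\{u<t\}$, $\{u=t\}$, $\{u>t\}$ and uses $\int_\Om(\rho_D-\rho)=0$), then compare Rayleigh quotients. The paper is terser and leaves the admissibility check $\rho_D\in\cP$ implicit, but the substance is identical.
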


\begin{proof}
From \Cref{lem-sign}, $u$ has a sign so we may assume $u>0$ in $\Om$. Now the conclusion follows from
\eqref{PN-inf} and the inequality
\begin{align*}
\begin{split}
\int_{\Om}(\rho_D-\rho) u^2=&\left(\int_{\{u<t\}} +\int_{\{u=t\}}+ \int_{\{u>t\}}  \right)(\rho_D-\rho)
u^2\\
\ge& \left(\int_{\{u<t\}} +\int_{\{u=t\}}+ \int_{\{u>t\}}  \right)(\rho_D-\rho) t^2\\
=&0.
\end{split}
\end{align*}
\end{proof}

Unless otherwise stated, a density function for the minimizer is always of the form given by
\Cref{density representation}. Notice also that $\rho_D$ is unique up to a measure zero set if and only
if $|\{u=t\}|=0$.

In the following lemma we see the relation between \ref{pb:P1} and \ref{pb:N}. Since the proof is
identical to \cite[Theorem 13]{CGI+}, we omit it here.

\begin{lemma}\label{PN relation}
\ref{pb:P1} is solved by a pair $(u,\rho_D)$ achieving $\Theta(h,H,M)$  if and only if \ref{pb:N} is
solved by a pair $(u,D)$ achieving $\La (\al ,A)$, where the parameters and the minimal eigenvalue are
related by
\begin{align*}
\al &= (H-h)\Theta(h,H,M),\\
A &= \fr{H|\Om|-M}{H-h},\\
\La(\al,A)&= H\Theta(h,H,M).
\end{align*}
Moreover, for  any   $0\le h <H$, the possible value of the parameters  is precisely $0<\al \le
\ov\al_\Om(A)$ if $0\le A <|\Om|$, where the constant $\ov \al _\Om(A)$ will be defined in
\eqref{const-alpha}, and $0<\al <\infty$ if $A=|\Om|$.
\end{lemma}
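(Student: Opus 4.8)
The plan is to establish the equivalence \ref{pb:P1} $\Leftrightarrow$ \ref{pb:N} by a change of variables linking the density $\rho$ and the characteristic function $\chi_D$, and then verify the stated parameter dictionary. Since the authors explicitly say the argument is identical to \cite[Theorem 13]{CGI+}, I would only sketch the local-to-nonlocal transcription rather than reproduce all details. First I would use \Cref{density representation} to reduce to densities of the two-valued form $\rho_D = h\chi_D + H\chi_{D^c}$ when minimizing \eqref{PN-inf}; this is legitimate because that lemma shows such a $\rho_D$ is again a minimizer, so the double infimum over $\cP$ can be restricted to this subclass without changing $\Theta(h,H,M)$. For such a density, $\rho_D = H - (H-h)\chi_D$, so the constraint $\int_\Om \rho_D = M$ becomes $H|\Om| - (H-h)|D| = M$, i.e. $|D| = A := \tfrac{H|\Om|-M}{H-h}$; note $A\in[0,|\Om|]$ exactly because $M\in[h|\Om|,H|\Om|]$.

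Next I would rewrite the Rayleigh quotient. With $\rho_D = H - (H-h)\chi_D$, the denominator of \eqref{PN-inf} is $H\int_\Om u^2 - (H-h)\int_\Om \chi_D u^2$. Dividing numerator and denominator of the quotient for \eqref{EL-PN} by the appropriate constant, one sees that the Euler–Lagrange equation $(-\De)^s u = \Theta\rho_D u$ reads $(-\De)^s u + (H-h)\Theta\,\chi_D u = H\Theta\, u$ in $\Om$, with $u=0$ on $\R^n\setminus\Om$. Comparing with the eigenvalue equation defining $\la_\Om(\al,D)$, we are led to set $\al = (H-h)\Theta(h,H,M)$ and $\la_\Om(\al,D) = H\Theta(h,H,M)$, so that minimizing $\Theta$ over densities of area-constraint type is the same as minimizing $\la_\Om(\al,D)$ over $|D|=A$, i.e. $\La_\Om(\al,A) = H\Theta(h,H,M)$. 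One must check that this correspondence is a genuine bijection at the level of minimizing pairs: a minimizing $(u,\rho_D)$ yields a minimizing $(u,D)$ and conversely; this follows because the variational characterizations of $\Theta$ and $\La_\Om$ are literally the same Rayleigh quotient after the linear substitution, and $u$ has a sign by \Cref{lem-sign}.

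For the final sentence about the range of parameters, I would argue as follows. Fix $0\le h<H$ and $0\le A<|\Om|$. As $\Theta$ ranges over all achievable values of the minimal frequency as $(h,H,M)$ vary with $M$ tied to $A$ via $M = H|\Om| - (H-h)A$, the quantity $\al = (H-h)\Theta$ ranges over $(0,\infty)$ a priori, but the consistency relation $\La_\Om(\al,A) = H\Theta = \tfrac{H}{H-h}\,\al$ must hold. Since $H/(H-h)>1$ can be made arbitrarily close to $1$ (taking $h\to 0$) and arbitrarily large, while $\La_\Om(\cdot,A)$ is a continuous nondecreasing function of $\al$ with $\La_\Om(0,A)=\mu_\Om>0$ and $\La_\Om(\al,A)\le \mu_\Om + \al$ with $\La_\Om(\al,A)/\al \to 0$, the line $\la = \tfrac{H}{H-h}\al$ through the origin meets the graph $\la = \La_\Om(\al,A)$ at exactly one positive $\al$; this is precisely the intersection that defines $\ov\al_\Om(A)$ when $H/(H-h)=1$ is replaced by the limiting case, and for $h>0$ the intersection occurs at some $\al<\ov\al_\Om(A)$, while every $\al\in(0,\ov\al_\Om(A)]$ is hit for a suitable $h$. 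The case $A=|\Om|$ is degenerate: then $D=\Om$, the term $\al\chi_D$ is a constant shift, and $\La_\Om(\al,|\Om|) = \mu_\Om + \al$, so the line through the origin of any slope $>1$ meets it, giving $\al\in(0,\infty)$.

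The main obstacle I anticipate is the last step: making precise the claim that \emph{every} $\al\in(0,\ov\al_\Om(A)]$ arises, i.e. the surjectivity of the parameter map onto the stated range, and pinning down the borderline value $\al=\ov\al_\Om(A)$ (which morally corresponds to $h=0$, the limiting non-admissible density). This requires knowing the qualitative behavior of $\al\mapsto\La_\Om(\al,A)$ — monotonicity, concavity, the bounds $\mu_\Om \le \La_\Om(\al,A)\le \mu_\Om+\al$, and strict sublinearity — which is exactly the content that makes the defining equation $\La_\Om(\ov\al_\Om(A),A)=\ov\al_\Om(A)$ have a unique solution; I would expect this to be established in \Cref{sec:basic} (around \eqref{const-alpha}) and would cite it rather than prove it here. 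Everything else is the routine linear algebra of the substitution $\rho_D = H-(H-h)\chi_D$, which is why the authors defer to \cite{CGI+}.
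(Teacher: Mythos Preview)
Your proposal is correct and follows exactly the approach of \cite[Theorem 13]{CGI+}, which is precisely what the paper does: it omits the proof entirely, citing that reference. One small correction to your last paragraph: the case $h=0$ is admissible (the paper allows $0\le h<H$), so the endpoint $\al=\ov\al_\Om(A)$ is attained directly at $h=0$ rather than only as a limit, and your worry there is unnecessary; also, you do not need $\La_\Om(\al,A)/\al\to 0$, only that this ratio equals $1$ at $\al=\ov\al_\Om(A)$ and tends to $+\infty$ as $\al\to 0^+$, which already gives surjectivity onto $[1,\infty)$ by continuity.
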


Until now, we saw that our nonlocal version of the composite membrane problem, \ref{pb:N}, is a
generalization of the physical problem, \ref{pb:P1}. Our next task is to explain the meaning of
fractional frequency for $s=\2$ by considering the optimization problem for the  Steklov eigenvalue
under the presence of a density $\rho$ (this is ``weighted").

Let $0<h<H$ and $0< M\in [h|\Om|,H|\Om|]$. As in  \ref{pb:P1}, we define the class of admissible
densities by
\begin{align*}
\cP_{S} = \left\{ \rho : \pa \Om \ra [h,H] : \int_{\pa \Om} \rho(x) \dd\si(x) = M \right\},
\end{align*}
for a $C^2$-boundary $\pa \Om$ of a bounded domain $\Om \su \R^{n+1}$, and for each $\rho \in \cP$, the
class of admissible functions by
\begin{align*}
\cH_{S}[\rho] = \left\{ u\in H^1(\Om) : \int_{\pa\Om} \rho u \dd\si=0 \right\}.
\end{align*}

Now we state the optimization problem for the (weighted) Steklov eigenvalue:
\begin{named}{Problem $\mathbf{(S)}$}\label{pb:S}
Find a density $\rho \in \cP_S$ and a function $u \in \cH_S[\rho]$ which realize the double infimum in
\begin{align*}
\Theta_S(h,H,M) := \inf_{\rho\in\cP_S} \inf_{u\in \cH_S[\rho]\sm\{0\}} \fr{\int_\Om |\D u|^2 \dd
x}{\int_{\pa\Om} \rho u^2\dd\si},
\end{align*}
whose Euler-Lagrange equation is
\begin{equation*}
\begin{alignedat}{3}
\De u &=0 &&\qu \trm{in } \Om,\\
\fr{\pa u}{\pa \nu} &= \Theta_S \rho u &&\qu \trm{on } \pa \Om.
\end{alignedat}
\end{equation*}
\end{named}

According to \cite{LP}, for a given density $\rho$ and $\Om \su \R^2$, the physical meaning of the
Steklov problem is constructing a free elastic membrane with prescribed mass concentrated at the
boundary (see also \cite{GP}.) Moreover, from the well known identification $(-\De)^\2 = \fr{\pa}{\pa
\nu}$ when $\Om=\R^{n+1}_+$ (cf. \cite{CS07}),
\ref{pb:P1} can be thought as the mixed boundary version of \ref{pb:S}, i.e.,
\begin{equation*}
\begin{alignedat}{3}
\De u& = 0 &&\qu \trm{in } \Om,\\
u&=  0 &&\qu\trm{on } A_0,\\
\fr{\pa u}{\pa \nu} &=  \Theta_S \rho u &&\qu \trm{on } A_1,
\end{alignedat}
\end{equation*}
where $A_1$ is a domain in $\pa\Om$, and $A_0=\pa\Om\sm A_1$ (see \cite{BKPS} for mixed Steklov
problems). Keeping in mind this interpretation, we may think that the basic fractional frequency in
\ref{pb:P1} describes the basic frequency of a elastic membrane which is fixed on $A_0$ and free on
$A_1$, with prescribed mass concentrated on $A_1$.

Notice that we can generalize \ref{pb:S} in the same way as \ref{pb:P1} does. In fact, the analogies of
\Cref{density representation} and \Cref{PN relation} can be established with minor modifications, for
example, the set $D$ should be of the form
\begin{align*}
\{-t < u <t\} \su D \su \{ -t \le u \le t\},
\end{align*}
since $u$ is no longer a positive function in the Steklov problem.

\section{Basic properties}\label{sec:basic}
In this section we establish existence for \ref{pb:N} and investigate the parameter dependence on $\La$.

\begin{lemma}\label{lem-regularity}
For any $\al>0$ and $A\in [0,|\Om|]$ there exists an optimal pair $(u,D)$ satisfying
\begin{align*}
u\in H^{2s}_{loc}(\Om) \cap C^{\beta} (\Om) \cap C^s(\R^n),
\end{align*}
where $\beta$ is any constant in $(0,2s)$.
\end{lemma}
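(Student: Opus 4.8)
The plan is to split the statement into three parts: (a) existence of an optimal pair, (b) the Sobolev regularity $u \in H^{2s}_{loc}(\Om)$, and (c) the Hölder regularity $u \in C^\beta(\Om) \cap C^s(\R^n)$. For existence, I would use the direct method of the calculus of variations. Fix $A \in [0,|\Om|]$ and take a minimizing sequence $(u_k, D_k)$ for $\La_\Om(\al, A)$, normalized so that $\|u_k\|_{L^2(\Om)} = 1$ (or normalized in the appropriate weighted norm coming from the Rayleigh quotient). Since the eigenvalues $\la_\Om(\al, D_k)$ are bounded, the Gagliardo seminorms $[u_k]_{H^s(\R^n)}$ are bounded, so $u_k$ is bounded in $H^s_0(\Om)$; by compactness of the embedding $H^s_0(\Om) \hookrightarrow L^2(\Om)$ we extract $u_k \rightharpoonup u$ weakly in $H^s_0(\Om)$ and strongly in $L^2$. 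For the sets, identify $\chi_{D_k}$ with elements of $L^\infty(\Om)$ with $\|\chi_{D_k}\|_{L^1} = A$; pass to a weak-$*$ limit $\chi_{D_k} \rightharpoonup \psi$ with $0 \le \psi \le 1$ and $\int_\Om \psi = A$. The subtle point is that $\psi$ need not be a characteristic function, so a ``bang-bang'' argument is needed: I would invoke the monotonicity in $D$ of the Rayleigh quotient together with the bathtub principle (as in \Cref{density representation} and its source \cite{LL}, Section 1.14) to replace $\psi$ by $\chi_D$ for a sublevel set $D = \{u < t\}$ (up to a boundary correction), showing the infimum is actually attained by an honest pair $(u, D)$. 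Lower semicontinuity of the Gagliardo seminorm under weak convergence then gives $\la_\Om(\al, D) \le \liminf \la_\Om(\al, D_k) = \La_\Om(\al, A)$, hence equality.

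For the regularity of the minimizer $u$: it solves the Euler–Lagrange equation $(-\De)^s u = (\la - \al\chi_D) u =: F$ in $\Om$ in the weak sense, with $u = 0$ on $\R^n \setminus \Om$. Since $u \in H^s_0(\Om) \subset L^2$, the right-hand side $F$ is in $L^\infty_{loc}$ only after we know $u \in L^\infty$; so I would first bootstrap to $L^\infty$. One clean route is to pass to the Caffarelli–Silvestre extension \eqref{CS-extension}: the extended $u$ solves $L_a u = 0$ in $B_r^+ \subset \R^{n+1}_+$ with $M_a u = c\,F$ on $\Ga_r^0$, and $F \in L^q$ for suitable $q$ coming from $u \in L^2$ and a fractional Sobolev embedding; De Giorgi–Nash–Moser type estimates for the degenerate operator $L_a$ (Muckenhoupt $A_2$ weight $y^a$) give local boundedness of $u$. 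Once $u \in L^\infty_{loc}(\Om)$, we have $F = (\la - \al\chi_D)u \in L^\infty_{loc}(\Om)$, and then:
\begin{itemize}
\item[$\bullet$] For $s \ne \tfrac12$ (i.e. $a \ne 0$): apply \Cref{lem-optreg} on small half-balls to get $u(\cdot, 0) \in C^{1-a} = C^{2s}$ locally in $\Om$. Since $2s > \beta$ for any $\beta \in (0,2s)$, this gives the claimed $C^\beta(\Om)$ (indeed better).
\item[$\bullet$] For $s = \tfrac12$ ($a = 0$): apply \Cref{lem-optreg0} to get $u(\cdot,0) \in C^\gamma$ locally for every $\gamma < 1 = 2s$, which is exactly $C^\beta(\Om)$ for any $\beta \in (0,2s)$.
\end{itemize}
For $u \in H^{2s}_{loc}(\Om)$: with $F \in L^\infty_{loc} \subset L^2_{loc}$ and $u \in L^2$, interior regularity for $(-\De)^s$ (e.g. via the extension and Caccioppoli-type energy estimates, or directly by commutator/difference-quotient methods testing the weak formulation against $\eta^2 (u - \text{average})$-type functions) upgrades $u\eta$ from $H^s$ to $H^{2s}$ for $\eta \in \mathcal{D}(\Om)$. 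The global Hölder bound $u \in C^s(\R^n)$ follows from the boundary regularity theory for the Dirichlet problem for $(-\De)^s$ on $C^{1,1}$ domains: $(-\De)^s u = F$ with $F \in L^\infty(\Om)$ and $u \equiv 0$ outside $\Om$ forces $u \in C^s(\R^n)$ (the optimal boundary Hölder exponent; cf. the extension estimates of \cite{CS07, ALP} applied up to $\Ga_r^0 \cap \pa\Om$, or standard fractional boundary regularity).

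The main obstacle I anticipate is the existence step — specifically, showing the weak-$*$ limit of the $\chi_{D_k}$ can be taken to be a genuine characteristic function, i.e. the bang-bang / sublevel-set structure. This is where the monotonicity $\la_\Om(\al, D) \le \la_\Om(\al, D')$ when $D \supset D'$ and the rearrangement (bathtub) argument must be combined carefully with the weak convergence of $u_k$, and it is the point at which the nonlocal nature of the operator demands more care than the local case: one cannot simply say $(-\De)^s u = 0$ on the set where $u$ is locally constant, so the identification $D = \{u \le t\}$ must be deferred (it is only fully proved later in \Cref{thm-reg}(iii) under extra hypotheses), and here one should settle for producing \emph{some} optimal set $D$ with $\{u < t\} \subset D \subset \{u \le t\}$. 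The regularity steps, by contrast, are essentially a bootstrap packaging of the already-quoted Lemmas \ref{lem-optreg} and \ref{lem-optreg0} together with standard degenerate-elliptic interior theory, so I expect them to be routine once $u \in L^\infty_{loc}$ is in hand.
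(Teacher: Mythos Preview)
Your proposal is correct and, on the existence side, essentially identical to the paper's proof: the paper also takes a minimizing sequence $(u_j,D_j)$ with $u_j$ normalized in $L^2$, passes to weak limits $u_j\rightharpoonup u$ in $H^s_0(\Om)$ (strongly in $L^2$) and $\chi_{D_j}\rightharpoonup \eta$ with $0\le\eta\le1$, $\int_\Om\eta=A$, obtains $(-\De)^s u+\al\eta u=\La u$, and then replaces $\eta$ by $\chi_D$ for some $D$ with $\{u<t\}\subset D\subset\{u\le t\}$ via exactly the bathtub inequality you describe. Your caveat that one should \emph{not} expect to identify $D=\{u\le t\}$ at this stage is exactly right and matches the paper.

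The regularity argument differs slightly in packaging. The paper works directly with the nonlocal equation $(-\De)^s u+\rho u=0$ and cites external results: \cite{BWZ} for $H^{2s}_{loc}$, a bootstrap via \cite{MPSY} for $L^\infty$, and \cite{RS} for both $C^s(\R^n)$ (boundary regularity on $C^{1,1}$ domains) and interior $C^\beta$, $\beta<2s$. You instead route through the Caffarelli--Silvestre extension and invoke the paper's own Lemmas~\ref{lem-optreg}--\ref{lem-optreg0}. Your route is valid and in fact yields the sharper $C^{2s}$ interior bound for $s\neq\tfrac12$ immediately (the paper defers this upgrade to the proof of Theorem~\ref{thm-reg}); on the other hand it leans on De~Giorgi--Nash--Moser for the degenerate weight $y^a$ to get $L^\infty$, which is heavier machinery than the direct $L^p$ bootstrap the paper uses. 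Either approach is fine for the lemma as stated.
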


\begin{proof}
As in \cite{CGI+}, we first investigate a regularity of a weak solution to
\begin{align*}
(-\De)^s u +\rho u =0 \quad \trm{in } \Om,
\end{align*}
where $\rho$ is a bounded function.

By the result of 	\cite{BWZ}, $u$ belongs to $H^{2s}_{loc}(\Om)$. From Lemma 2.3 in \cite{MPSY}, we
can use a standard bootstrapping argument so that $u\in L^{\infty}(\Om)$. Then by Proposition 1.1 in
\cite{RS} gives that $u\in C^s(\R^n)$. Moreover, from Lemma 2.9 in the same paper, we have $u\in
C^{\beta}(\Om)$ for any $\beta\in (0,2s)$.

Now we take a minimizing sequence $\{(u_j,D_j)\}$ such that $u_j$ is a positive $L^2$-normalized first
eigenfunction of $(-\De)^s+\al\chi_{D_j}$ in $H^s_0(\Om)$, i.e., $u_j$ minimizes the functional 
\begin{align*}
\fr{c_{n,s}}{2}\left(\int_\Om\int_\Om \fr{|w(x)-w(y)|^2}{|x-y|^{n+2s}}\dd x \dd y\right)^2+\al \int_{D_j}w^2 \dd x
\end{align*} 
among all functions $w\in H^s_0(\Om)$ that satisfy $\norm{w}_{L^2(\Om)}=1$. The existence of such eigenfunctions is proved in \cite{DFL18_MN}.
Since $\la(D_j)$ is bounded, $u_j$ is also
bounded in $H^s_0(\Om)$. Note that $\chi_{D_j}$ is bounded in $L^2(\Om)$. Then, up to subsequence, we
have
\begin{align*}
\chi_{D_j} &\rightharpoonup\eta \quad \trm{in } L^2(\Om)\\
u_j &\rightharpoonup u \quad \trm{in } H^s_0(\Om).
\end{align*}
By compactness (see \cite{NPV}), we can find a strongly convergent subsequence $\{u_j\}$ in $L^2(\Om)$.
Thus we know that $\chi_{D_j}u_j \rightharpoonup \eta u$ in $L^{2}(\Om)$. Therefore, $u$ is a weak
solution of
\begin{align*}
(-\De)^s u+\al \eta u =\La u.
\end{align*}
From the properties of weak convergence, we have
\begin{align*}
0\le \eta \le 1, \quad \int _\Om \eta =A.
\end{align*}
This, and the following inequality
\begin{align}
\begin{split}\label{bathtub}
\int_{\Om}(\eta-\chi_D) u^2&=\left(\int_{\{u<t\}} +\int_{\{u=t\}}+ \int_{\{u>t\}}  \right)(\eta-\chi_D)
u^2\\
&\ge \left(\int_{\{u<t\}} +\int_{\{u=t\}}+ \int_{\{u>t\}}  \right)(\eta-\chi_D) t^2\\
&=0,
\end{split}
\end{align}
where $ t:=\sup\{c:|\{u<c\}|<A\}$ and for any $D$ satisfying
\begin{align}\label {eq-D1}
\{u<t\} \subset D\subset \{u\le t\}, \qu |D|=A,
\end{align}
imply that we may replace $\eta$ by $\chi_D$.
\end{proof}

Note that Lemma \ref{lem-sign} implies that $u$ has a sign. With loss of generality, assume in the following that $u>0$. Now we remark that, from inequality \eqref{bathtub}, the optimal configuration $D$ always has the form
\eqref{eq-D1}, and thus $D$ contains a tubular neighborhood of $\pa \Om$.

\begin{lemma}\label{lem-D}
Let $(u,D)$ be an optimal pair with $A>0$. Then:
\begin{enumerate}[(i)]
\item The optimal configuration satisfies
\begin{align}\label{eq-D2}
\{u<t\} \su D\su \{u\le t\},
\end{align}
where $t:=\sup \{c:|\{u<c\}|<A\}>0$.
\item $D$ contains a tubular neighborhood of the boundary $\pa \Om$.
\end{enumerate}
\end{lemma}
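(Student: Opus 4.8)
The plan is to deduce both statements almost directly from the bathtub inequality \eqref{bathtub} established in the proof of Lemma \ref{lem-regularity}, together with the strong maximum principle (Lemma \ref{lem-sign}) and an elementary continuity/monotonicity argument for the distribution function of $u$. For part $(i)$, I would argue as follows. We already know from \eqref{bathtub} that we may assume the optimal configuration $D$ satisfies $\{u<t\}\su D\su\{u\le t\}$ with $t=\sup\{c:|\{u<c\}|<A\}$; what needs justification is that this $t$ is well-defined, finite and strictly positive, and that a set $D$ of measure exactly $A$ actually exists inside the sandwich. Since $u>0$ in $\Om$ by Lemma \ref{lem-sign} (after normalizing the sign), we have $|\{u<c\}|=0$ for $c\le 0$, so the set $\{c:|\{u<c\}|<A\}$ contains $0$ and hence $t\ge 0$; moreover since $u\in C^\beta(\Om)$ it is bounded on compact subsets, and $|\{u<c\}|\to|\Om|>A$ as $c\to\infty$, so $t<\infty$. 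The distribution function $c\mapsto|\{u<c\}|$ is nondecreasing and left-continuous, from which $|\{u<t\}|\le A\le|\{u\le t\}|$; hence one may choose a measurable $D$ with $\{u<t\}\su D\su\{u\le t\}$ and $|D|=A$ simply by adjoining to $\{u<t\}$ an appropriate portion of the level set $\{u=t\}$. To see $t>0$, note that if $t=0$ then $|\{u\le 0\}|\ge A>0$, contradicting $u>0$ in $\Om$ (the set $\{u\le 0\}\cap\Om$ is empty up to the boundary, which has measure zero). This gives $(i)$.

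For part $(ii)$, the key point is that $u$ extends continuously by zero across $\pa\Om$: indeed Lemma \ref{lem-regularity} gives $u\in C^s(\R^n)$ with $u\equiv 0$ on $\R^n\sm\Om$, so $u\to 0$ uniformly as one approaches $\pa\Om$ from inside $\Om$. Concretely, by uniform continuity there is $\delta>0$ such that $u(x)<t$ for every $x\in\Om$ with $\dist(x,\pa\Om)<\delta$, where $t>0$ is the constant from $(i)$. Hence the tubular neighborhood $\{x\in\Om:\dist(x,\pa\Om)<\delta\}$ is contained in $\{u<t\}\su D$, which is exactly the claim.

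I do not expect any serious obstacle here; the lemma is essentially a bookkeeping consequence of the bathtub argument in Lemma \ref{lem-regularity} combined with the boundary continuity of $u$. The only mildly delicate points are (a) checking that the supremum defining $t$ is attained in the sense needed, i.e. that the distribution function is left-continuous and that one can realize measure exactly $A$ — this uses nothing beyond monotone convergence and the layer-cake structure — and (b) ruling out $t=0$, which requires the strict positivity $u>0$ in $\Om$ from Lemma \ref{lem-sign} rather than mere nonnegativity. Both are routine, so the proof should be short.
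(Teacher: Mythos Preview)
Your proposal is correct and follows essentially the same approach as the paper, which does not give a formal proof but only the one-line remark preceding the lemma: ``from inequality \eqref{bathtub}, the optimal configuration $D$ always has the form \eqref{eq-D1}, and thus $D$ contains a tubular neighborhood of $\pa\Om$.'' Your write-up simply fills in the implicit details --- well-definedness and strict positivity of $t$ via \Cref{lem-sign}, and the tubular neighborhood via the global regularity $u\in C^s(\R^n)$ from \Cref{lem-regularity} --- exactly as intended.

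One small point of phrasing: the inequality \eqref{bathtub} in the existence proof was written for the weak limit $\eta$ versus a chosen $D$, so when you say ``we already know from \eqref{bathtub} that we may assume\ldots'', it would be cleaner to state explicitly that for an \emph{arbitrary} optimal pair $(u,D)$, optimality forces $D$ to minimize $\int_{D'} u^2$ over $|D'|=A$ (else the Rayleigh quotient drops), and then the bathtub principle gives the sandwich. Also, the clause ``a set $D$ of measure exactly $A$ actually exists inside the sandwich'' is superfluous here, since $D$ is given as part of the optimal pair; you can drop it.
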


As indicated in the introduction, one of the main results in this paper is to show that  $D$ is exactly
a sublevel set (\Cref{lem-D-main}), i.e.,
\begin{align}\label{D-sublevel}
D=\{x\in \Om:u(x)\le t\}.
\end{align}
The proof is very delicate and uses the blowup arguments in \Cref{sec:blowups}.

We close this section with the parameter dependence on $\La$. The proof is standard with some necessary
minor variations from \cite[Proposition 10]{CGI+}, but we include it here for convenience of the reader.

\begin{lemma}
The optimal frequency $\La(\al, A)$ is strictly increasing in each variable on $\R_+^2$, and
$\La(\al,A)-\al$ is strictly decreasing in $\al$ for fixed $A>0$. Furthermore, the function
$(\al,A)\mapsto \La(\al,A)$ is Lipschitz continuous, uniformly on bounded sets, i.e., for any
$\al_1,\al_2\ge0$, and $A_1,A_2\in [0,|\Om|]$,
\begin{align}\label{ineq-lem-La}
|\La(\al_1,A_1)-\La(\al_2,A_2)|\le \fr{\max\(A_1,A_2\)}{|\Om|}|\al_1-\al_2|+C|A_1-A_2|,
\end{align}	
where $C=C(\Om, \max\{\al_1,\al_2\})$. Consequently, there exists a unique value $\ov \al_\Om(A)$ for
$A\in [0,|\Om|)$ satisfying
\begin{align}\label{const-alpha}
\La (\ov \al_\Om(A),A)= \ov \al _\Om (A),
\end{align}
and the function $A \mapsto \ov\al_\Om(A)$ is continuous and strictly increasing with
$\ov\al_\Om(0)=\mu_\Om$ and $\ov \al_\Om(A)\ra \infty$ as $A \ra |\Om|$.
\end{lemma}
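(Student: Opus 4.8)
The plan is to follow the standard monotone-dependence arguments (see \cite[Proposition 10]{CGI+}), adapted to the fractional setting, and then to deduce the existence and properties of $\ov\al_\Om(A)$ from an intermediate-value/contraction type argument. First I would establish strict monotonicity of $\La$ in each variable. Monotonicity in $\al$ is immediate from the variational characterization
\begin{align*}
\La(\al,A)=\inf_{|D|=A}\ \inf_{\|u\|_{L^2}=1}\ \left(\fr{c_{n,s}}{2}\int_{\R^n\ti\R^n}\fr{|u(x)-u(y)|^2}{|x-y|^{n+2s}}\dd x\dd y+\al\int_D u^2\dd x\right),
\end{align*}
since increasing $\al$ increases the functional pointwise (and strictly, because the optimal $u$ is not identically zero on $D$, as $D$ contains a tubular neighborhood of $\pa\Om$ by \Cref{lem-D} and $u>0$ in $\Om$ by \Cref{lem-sign}). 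For monotonicity in $A$: given $A_1<A_2$, take the optimal pair $(u,D_1)$ for $A_1$ and enlarge $D_1$ to a set $D_2\supset D_1$ with $|D_2|=A_2$; then $\La(\al,A_2)\le\la_\Om(\al,D_2)\le\la_\Om(\al,D_1)=\La(\al,A_1)$, and strictness again follows because the optimal eigenfunction is strictly positive on the added region. That $\La(\al,A)-\al$ is strictly decreasing in $\al$ follows by plugging the optimal $u$ for $\al_2$ into the Rayleigh quotient for $\al_1<\al_2$ and using $\int_D u^2\dd x<1=\|u\|_{L^2(\Om)}^2$ (strict because $|D^c|>0$ and $u>0$ in $\Om$), which gives $\La(\al_1,A)-\al_1\ge \la_\Om(\al_1,D)-\al_1 > \La(\al_2,A)-\al_2$ after rearranging.

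Next I would prove the Lipschitz estimate \eqref{ineq-lem-La}. For the $\al$-dependence, fix $A$ and let $D$ be optimal for $\al_2$ with eigenfunction $u$; then
\begin{align*}
\La(\al_1,A)-\La(\al_2,A)\le \la_\Om(\al_1,D)-\la_\Om(\al_2,D)=(\al_1-\al_2)\int_D u^2\dd x,
\end{align*}
and since $0\le \int_D u^2\dd x\le 1$, symmetrizing in $1\leftrightarrow 2$ yields $|\La(\al_1,A)-\La(\al_2,A)|\le|\al_1-\al_2|$. To sharpen the constant to $\max(A_1,A_2)/|\Om|$ one uses that the first eigenfunction is spread out: an estimate of the form $\int_D u^2\le |D|/|\Om|$ holds when $u$ has (say) a suitable normalized bound, which can be obtained from the representation $\rho_D=h\chi_D+H\chi_{D^c}$ and \Cref{density representation}, or more directly by the argument in \cite{CGI+}. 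For the $A$-dependence, fix $\al$ and take $A_1<A_2$; starting from the optimal pair for $A_1$ and enlarging the set, the eigenvalue changes by a controlled amount depending on the $L^\infty$ or $H^s$ norm of the eigenfunction and on $|A_2-A_1|$; the uniform (on bounded $\al$-sets) bound on the first eigenvalue, hence on $\|u\|_{H^s_0(\Om)}$, gives the constant $C=C(\Om,\max\{\al_1,\al_2\})$. Continuity of $\La$ on $\R_+^2$ is an immediate consequence.

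Finally I would construct $\ov\al_\Om(A)$. Fix $A\in[0,|\Om|)$ and consider $\Phi(\al):=\La(\al,A)-\al$ on $[0,\infty)$. By the above, $\Phi$ is continuous and strictly decreasing. At $\al=0$ we have $\Phi(0)=\La(0,A)=\mu_\Om>0$ (the first Dirichlet fractional eigenvalue, independent of $A$ since the $\chi_D$ term vanishes). As $\al\to\infty$, the bound $\La(\al,A)\le \la_\Om(\al,D)$ for a fixed admissible $D$ together with a test-function argument — taking $u$ supported essentially in $D^c$, which has positive measure — shows $\La(\al,A)$ stays bounded (or at least grows sublinearly), so $\Phi(\al)\to-\infty$. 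Hence there is a unique $\ov\al_\Om(A)$ with $\Phi(\ov\al_\Om(A))=0$, i.e. \eqref{const-alpha}. Monotonicity and continuity of $A\mapsto\ov\al_\Om(A)$: since $\La$ is strictly increasing and continuous in $A$, for $A_1<A_2$ one has $\Phi_{A_1}(\ov\al_\Om(A_1))=0$ and $\Phi_{A_2}(\ov\al_\Om(A_1))=\La(\ov\al_\Om(A_1),A_2)-\ov\al_\Om(A_1)>\La(\ov\al_\Om(A_1),A_1)-\ov\al_\Om(A_1)=0$, so by strict monotonicity of $\Phi_{A_2}$ we get $\ov\al_\Om(A_2)>\ov\al_\Om(A_1)$; continuity follows from continuity of $\La$ and an implicit-function/monotonicity argument. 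The endpoint values $\ov\al_\Om(0)=\mu_\Om$ (solve $\La(\al,0)-\al=\mu_\Om-\al=0$) and $\ov\al_\Om(A)\to\infty$ as $A\to|\Om|$ come from analyzing $\La(\al,A)$ as $A\to|\Om|$: when $|D^c|\to 0$, a positive fraction of the mass of any test function must sit on $D$, forcing $\La(\al,A)\to\La_{\Om}^{\mathrm{full}}(\al)$, which grows linearly in $\al$, pushing the fixed point to infinity. The main obstacle I expect is pinning down the sharp constant $\max(A_1,A_2)/|\Om|$ in the $\al$-Lipschitz bound — this requires the quantitative spreading estimate $\int_D u^2\,dx\le |D|/|\Om|$ for the normalized first eigenfunction, which in the fractional case needs a bit of care (it is not the naive $L^\infty$ bound) and is the one place where the proof genuinely differs from simply copying \cite{CGI+}.
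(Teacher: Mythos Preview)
Your overall strategy---comparing the Rayleigh quotients by inserting the optimal pair for one parameter value as a competitor for the other---is exactly the right one and matches the paper's approach. However, two of your comparison inequalities go the wrong way, and as written the monotonicity arguments fail.

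First, in the monotonicity-in-$A$ step you write, for $D_1\subset D_2$,
\[
\La(\al,A_2)\le\la_\Om(\al,D_2)\le\la_\Om(\al,D_1)=\La(\al,A_1).
\]
The middle inequality is false: since $\al\chi_{D_2}\ge\al\chi_{D_1}$ pointwise, one has $\la_\Om(\al,D_2)\ge\la_\Om(\al,D_1)$, not $\le$. (And even if it held, the chain would give $\La$ \emph{decreasing} in $A$, the opposite of what you want.) The correct move is to start from the optimal pair $(u_2,D_2)$ for $A_2$, choose $D_1'\subset D_2$ with $|D_1'|=A_1$, and use $u_2$ as test function for $\la_\Om(\al,D_1')$; this yields
\[
\La(\al,A_1)\le\la_\Om(\al,D_1')\le\La(\al,A_2)-\al\int_{D_2\setminus D_1'}u_2^2<\La(\al,A_2).
\]
Second, in the argument for $\La(\al,A)-\al$ strictly decreasing in $\al$, you write $\La(\al_1,A)-\al_1\ge\la_\Om(\al_1,D)-\al_1$ with $D$ optimal for $\al_2$. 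Again this is backwards: $\La(\al_1,A)$ is an infimum, so $\La(\al_1,A)\le\la_\Om(\al_1,D)$. Plugging the $\al_2$-optimal pair into the $\al_1$-problem only gives an upper bound on $\La(\al_1,A)-\al_1$, not the lower bound you need. The fix is to reverse the roles: plug the $\al_1$-optimal pair $(u_1,D_1)$ into the $\al_2$-Rayleigh quotient to get $\La(\al_2,A)\le\La(\al_1,A)+(\al_2-\al_1)\int_{D_1}u_1^2$, whence $\La(\al_2,A)-\al_2\le\La(\al_1,A)-\al_1-(\al_2-\al_1)\bigl(1-\int_{D_1}u_1^2\bigr)<\La(\al_1,A)-\al_1$.

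The remaining parts of your outline (the Lipschitz bound via the two-sided comparison, the spreading estimate $\int_D u^2/|D|\le\int_\Om u^2/|\Om|$ for sublevel sets which gives the sharp constant $\max(A_1,A_2)/|\Om|$, and the intermediate-value construction of $\ov\al_\Om(A)$) are essentially the paper's argument. One small point: for the $A$-Lipschitz constant the paper invokes a uniform $L^\infty$ bound on the normalized eigenfunction (not merely the $H^s_0$ bound you mention), which is what controls $\int_{D_2'\setminus D_1}u_1^2\le C|A_2-A_1|$.
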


\begin{proof}
Let $\La_i=\La(\al_i,A_i)$ and let $(u_i,D_i)$ be a minimizer for $\La_i$ such that $\int_\Om u_i^2=1$
for $i=1,2$. Then, we have
\begin{align*}
\La_i= \int_{\R^n} |(-\De)^{s/2} u_i|^2 +\al_i \int_{D_i} u_i^2, \quad |D_i|=A_i.
\end{align*}
We may assume $A_1\le A_2$, and then take $D_1' \su D_2$ with $|D_1'|=A_1$ and $D_2' \supset D_1$ with
$|D_2'|=A_2$. From the optimality one obtains
\begin{align}\label{ineq-La}
\La_i \le \int_{\R^n} |(-\De)^{s/2} u_j|^2 +\al_i \int_{D_j'} u_j^2 = \La_j -\al_j\int_{D_j}
u_j^2+\al_i\int_{D_i'}u_j^2
\end{align}
or, equivalently,
\begin{align}\label{ineq-La-al}
\La_i-\al_i\le \La_j-\al_j+\al_j\int_{\Om\setminus D_j}u_j^2-\al_i\int_{\Om\sm D_i'}u_j^2
\end{align}
for all $i,j\in\{1,2\}$. Then, \eqref{ineq-La} with $(i,j)=(1,2)$ and $\al_1=\al_2$ yields
\begin{align*}
\La_2-\La_1\ge\al_1\int_{D_2\sm D_1'}u_2^2\ge 0.
\end{align*}
Moreover, equality in the above holds if and only if $u_2\equiv 0$ on $D_2\sm D_1'$ or $\al_1=0$. By the
global strong maximum principle, the former case cannot happen unless $A_1=A_2$. In fact, for $A_2>A_1$,
$|D_2\sm D_1'|>0$ so that $u_2\equiv0$ on $D_2\sm D_1'$, which cannot happen. This proves $\La(\al,A)$
is strictly increasing in $A$.

Similarly, \eqref{ineq-La} with $(i,j)=(1,2)$ and $A_1=A_2$ gives
\begin{align}\label{ineq-al}
\La_2-\La_1\ge (\al_2-\al_1)\int_{D_2}u_2^2>0
\end{align}
for $\al_2>\al_1$, and \eqref{ineq-La-al} with $(i,j)=(2,1)$ and $\al_1=\al_2$ implies
\begin{align*}
(\La_1-\al_1)-(\La_2-\al_2)\ge \al_1\int_{D_2'\sm D_1} u_1^2>0
\end{align*}
for $A_2>A_1$.

For the second part, combining \eqref{ineq-La} with $(i,j)=(1,2)$ and $(i,j)=(2,1)$, we obtain
\begin{align}\label{ineq-bothside}
(\al_2-\al_1)\int_{D_2}u_2^2+\al_1\int_{D_2\sm D_1'}u_2^2\le \La_2-\La_1\le
(\al_2-\al_1)\int_{D_2'}u_1^2+\al_1\int_{D_2'\sm D_1}u_1^2,
\end{align}
so that
\begin{align}\label{ineq-Lip}
|\La_2-\La_1|\le |\al_2-\al_1|\max\(\int_{D_2'}u_1^2,\int_{D_2}u_2^2\)+\al_1\max\(\int_{D_2'\sm
D_1}u_1^2,\int_{D_2\sm D_1'}u_2^2\).
\end{align}
From \Cref{lem-D}, $D_2$ satisfies \eqref{eq-D2} with $u_2$ and $t_2:=\sup\{s:|\{u_2<s\}<A_2\}$.
Moreover, we may take $D_2'$ of the form \eqref{eq-D2} for $u_1$ and some $t_2'$ since $(u_1,D_1)$ is
also optimal pair. Now observe that for any $D\su \Om$ satisfying \eqref{eq-D2} with any $s>0$ we have
\begin{align*}
\fr{\int_{D}u^2}{|D|}\le\fr{\int_\Om u^2}{|\Om|},
\end{align*}
which comes from the fact that average on the whole space is greater than the average on the set $\{u\le
t\}$.
Then
\begin{align}\label{ineq-L2}
\max\(\int_{D_2'}u_1^2,\int_{D_2}u_2^2\)\le \fr{A_2}{|\Om|}\le 1.
\end{align}
On the other hand, using \eqref{ineq-bothside} with $\al_2=0$, $A_1=A_2$, one has $\La_i\le
\mu_\Om+\al_i$ for $i=1,2$. Moreover, $u_i$ solves
\begin{align*}
(-\De)^{s}u_i+(\al\chi_{D_i}-\La_i)u_i=&0 \quad \trm{in }\Om,\\
u=&0 \quad \trm{on } \R^n\sm \Om,
\end{align*}
whose coefficients are bounded if $\al$ is bounded. By global boundedness (c.f. Lemma 2.3 in
\cite{BWZ}), we obtain
\begin{align*}
\max\(\int_{D_2'\sm D_1}u_1^2,\int_{D_2\sm D_1'}u_2^2\)\le |A_2-A_1| \max \(\sup_\Om u_1^2,\sup_\Om
u_2^2\)\le C|A_2-A_1|,
\end{align*}
where $C=C(\Om,\al_1,\al_2)$, and hence \eqref{ineq-lem-La} follows from \eqref{ineq-Lip},
\eqref{ineq-L2} and the estimate above.

Finally, we observe that $\La(\al,A)-\al$ equals $\mu_\Om>0$ for $\al=0$, and goes to $-\infty$ as $\al
\ra \infty$ since $\La(\al,A)-\al =\La-\mu_\Om+\mu_\Om-\al\le -(1-\fr{A}{|\Om|})\al+\mu_\Om$ by taking
$A_1=A_2=A$ and $\al_2=0$ in \eqref{ineq-lem-La}. Therefore, the function $\ov \al_\Om$ is well-defined.
Again, \eqref{ineq-lem-La} implies the continuity assertion and the first inequality of
\eqref{ineq-bothside} gives monotone assertion if we choose $\al_1=\ov\al_\Om(A_1)$ and
$\al_2=\ov\al_\Om(A_2)$. Then the remaining assertions are $\ov\al_\Om(0)=0$, which is trivial, and
$\ov\al_\Om(A)\ra \infty $ as $A\ra |\Om|$. This follows at once by observing
\begin{align*}
\ov\al_\Om\int_{\Om\sm D_2}u_2^2 \ge \mu_\Om>0
\end{align*}
from \eqref{ineq-al} with $\al_2=\ov \al_\Om$ and $\al_1=0$.
\end{proof}

\section{Blowups}\label{sec:blowups}

Fix $-1<a =1-2s<1$. In this section and the next one we will consider blowups for  \ref{pb:E} and its
non-triviality on $\R^n\ti \{0\}$. The results obtained will be used in \Cref{sec:struc} to show that an
optimal configuration $D$ is given by the sublevel set of the corresponding solution $u$, i.e.,
$D=\{u\le t\}$ for some $t$, where $(u,D)$ is an optimal pair. We first discuss the case of $a\not=0$
($s \not = \2$), for which the optimal regularity is given by \Cref{lem-optreg}, and then the remaining
case, $a=0$ (this is, $s=\2$), will be treated.

Throughout this section, \ref{pb:E} is converted into a more general problem as in \cite{CK} by defining
$v=t-u$, $f=(\La-\al )u$, and $g=-\La u$, namely
\begin{equation}
\begin{alignedat}{3}\label{eq-v}
	L_a v = \mathrm{div}(y^a \D v)&=0 &&\quad \textrm{in } \R^{n+1}_+,\\
	M_a v = \lim_{y\ra 0}(y^a \pa _y v)&= f\chi_{D}-g\chi_{D^c} &&\qu\textrm{on } \Om\subset \pa
\R^{n+1}_+,\\
	v&=t \quad &&\qu\textrm{on } \pa\R_+^{n+1}\setminus \Om,
\end{alignedat}
\end{equation}
where $t$ is given by \eqref{eq-D2}. Notice that $f>0$, $g<0$, $f+g<0$ in a neighborhood of the free
boundary $\mathcal{F}$, and $\{ v>0\} \subset D \subset \{ v\ge 0\}$. Since most of the properties in
this section use a local argument near the free boundary, we focus on a half-ball $B^+_{r_0}$ centered
on the free boundary $\cF$ with small radius $r_0>0$ so that $\Ga_{r_0}^0\Subset \Om$. By translation,
we may assume that the center of this half-ball is the origin. We also assume that for some positive
constant $\eta_0$,

\begin{align}\label{const-lambda}
f \ge \eta_0 >0,\qu g\le -\eta_0 <0,\qu  \trm{and}\qu f+g\le -\eta_0 <0,
\end{align}
over a ball $\Ga_{r_0}^0$, and that $f, g \in C^s(\ov\Ga_{r_0}^0)$. In the rest of this section $v$ will
always denote a weak solution of
\begin{align}\label{eq-v-ball}
L_a v=0 \qu \trm{in } B_{r_0}^+ \qu \trm{and} \qu  M_a v = \fc \qu  \trm{on } \Ga_{r_0}^0
\end{align}
in the  sense of Definition \ref{def-weak-local}.

From the standard argument of Caccioppoli's inequality (see for instance \cite{HL}) we obtain the
following energy estimate:

\begin{lemma}[Energy estimate]\label{lem-EE}
Let $-1<a<1$ and $v$ be a weak solution of \eqref{eq-v-ball}. Then for any $0<r<r_0$ we have
\begin{align}\label{ineq-energy}
\int_{B_{r/2}^+} |\D v|^2 y^a \dd X \le \fr{32}{r^2}\int_{B_{r}^+} v^2 y^a \dd X+
2\max\{\norm{f}_{L^\infty},\norm{g}_{L^\infty}\}\int_{\Ga_{r}^0} |v| \dd x.
\end{align}
\end{lemma}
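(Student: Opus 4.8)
The plan is to prove the Caccioppoli-type energy estimate \eqref{ineq-energy} by testing the weak formulation in Definition \ref{def-weak-local} against $\varphi = v\zeta^2$, where $\zeta$ is a suitable cutoff function. Concretely, I would fix a smooth cutoff $\zeta \in C^\infty_c(B_r)$ with $0\le\zeta\le 1$, $\zeta\equiv 1$ on $B_{r/2}$, and $|\nabla\zeta|\le C/r$ (with $C=4$, say, to produce the constant $32$), and $\zeta$ independent of $y$ near $\Gamma_r^0$ so that $\zeta$ does not vanish on the flat boundary piece where $M_a v$ lives. Note that $\varphi=v\zeta^2$ is an admissible test function since $|\nabla v|^2 y^a\in L^1(B_r^+)$ implies $|\nabla\varphi|^2 y^a\in L^1$, and $\varphi$ vanishes near $\Gamma_r^+$; a density/approximation argument handles the fact that $\varphi$ is only in the weighted Sobolev space rather than $C^1(\overline{B_r^+})$.

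Plugging $\varphi=v\zeta^2$ into the weak formulation gives
\[
\int_{B_r^+}\nabla v\cdot\nabla(v\zeta^2)\, y^a\dd X + \int_{\Gamma_r^0}(\fc)\,v\zeta^2\dd x = 0.
\]
Expanding $\nabla(v\zeta^2)=\zeta^2\nabla v + 2v\zeta\nabla\zeta$, the first integral becomes
\[
\int_{B_r^+}|\nabla v|^2\zeta^2 y^a\dd X + 2\int_{B_r^+} v\zeta\,\nabla v\cdot\nabla\zeta\, y^a\dd X.
\]
For the cross term I would use the Cauchy–Schwarz (or Young) inequality $2|v\zeta\,\nabla v\cdot\nabla\zeta| \le \tfrac12 \zeta^2|\nabla v|^2 + 2v^2|\nabla\zeta|^2$ so that half of the gradient term is absorbed into the left side, leaving
\[
\tfrac12\int_{B_r^+}|\nabla v|^2\zeta^2 y^a \le 2\int_{B_r^+} v^2|\nabla\zeta|^2 y^a + \Big|\int_{\Gamma_r^0}(\fc)v\zeta^2\dd x\Big|.
\]
Using $|\nabla\zeta|\le 4/r$ turns the first term into $\tfrac{32}{r^2}\int_{B_r^+}v^2 y^a$, and for the boundary term I bound $|\fc|\le \max\{\|f\|_{L^\infty},\|g\|_{L^\infty}\}$ pointwise and $\zeta^2\le 1$, obtaining $\max\{\|f\|_{L^\infty},\|g\|_{L^\infty}\}\int_{\Gamma_r^0}|v|\dd x$. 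Restricting the left-hand side to $B_{r/2}^+$ (where $\zeta\equiv1$) and multiplying by $2$ yields exactly \eqref{ineq-energy}.

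The main obstacle is the justification of using $v\zeta^2$ as a test function: Definition \ref{def-weak-local} only permits test functions in $C^1(\overline{B_r^+})$ vanishing on $\Gamma_r^+$, whereas $v\zeta^2$ has merely the regularity $|\nabla(v\zeta^2)|^2 y^a\in L^1(B_r^+)$ together with boundedness of $v$. The resolution is a routine density argument — approximate $v$ by $C^1$ functions in the weighted Sobolev norm $H^1(B_r^+,y^a)$, pass to the limit in each term (the weighted Dirichlet integral by the $L^1$ bound on $|\nabla v|^2 y^a$, the boundary integral by the trace inequality for weighted Sobolev spaces together with $h=\fc\in L^\infty(\Gamma_r^0)$), and use that $v$ is bounded so $v\zeta^2$ is an admissible limit. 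One should also note that the estimate is purely local near the free boundary, so the hypotheses \eqref{const-lambda} and $f,g\in C^s$ ensure $\|f\|_{L^\infty},\|g\|_{L^\infty}$ over $\Gamma_{r_0}^0$ are finite; everything else is elementary. This is the standard Caccioppoli argument adapted to the $L_a$-operator and the degenerate weight $y^a$, as in \cite{HL}.
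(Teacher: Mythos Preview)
Your proposal is correct and follows precisely the standard Caccioppoli argument that the paper invokes without proof (it simply cites \cite{HL}). One minor arithmetic slip: with $|\nabla\zeta|\le 4/r$ and your Young split, after the final multiplication by $2$ you obtain $64/r^2$ rather than $32/r^2$; taking $|\nabla\zeta|\le 2\sqrt{2}/r$ recovers the stated constant, though the precise value is immaterial for the paper's applications.
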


Let us now define the scaled function
\begin{align*}
v_r(X)=\fr{v(rX)}{r^{1-a}},\qu f_r(x)=f(rx),\qu \trm{and}\qu g_r(x)=g(rx),
\end{align*}
and the scaled configuration set $D_r=\{x\in \R^n:rx\in D\}$
for $0<r<r_0$. Notice that we assumed $0\in \pa D$. If $a<0$, we also assume that $Dv(0)=0$. Observe that
\begin{equation*}\begin{aligned}[3]\label{eq-vr}
L_a v_r=\dv(y^a\D v_r )&=0 \qu &&\trm{in } B_{\fr{1}{r}}^+,\\
M_a v_r=\lim_{y\ra 0}y^a\pa _y v_r&=f_r\chi_{D_r}-g_r \chi_{D_r^c} \qu &&\trm{on } \Ga^0_\fr{1}{r}.
\end{aligned}\end{equation*}
In terms of the scaled function above, inequality \eqref{ineq-energy} becomes
\begin{align}\label{ineq-energy-r}
\int_{B_{1/2}^+} |\D v_r|^2 y^a \dd X \le 32\int_{B_1^+} v_r^2 y^a \dd X+ 2
\max\{\norm{f_r}_{L^\infty},\norm{g_r}_{L^\infty}\}\int_{\Ga_1^0} |v_r|\dd x.
\end{align}
This, together with \Cref{lem-optreg}, yields:

\begin{lemma}\label{lem-blowup-not0}
Let $a\not = 0$. Assume that $v$ is a bounded weak solution of \eqref{eq-v-ball}. Then there exist a decreasing
subsequence $\{r_j\}$ converging to zero and a function $v_0$ in $C^{1-a}_{loc}(\R^{n+1}_+)$ such that,
for any $R>0$, $v_{r_j}\ra v_0$ in $C^{\ga}(\ov B_R^+)$ for $\ga<1-a$, and $v_{r_j}\rightharpoonup v_0$
in weakly in $H^1(B_R^+,y^a)$ as $j\ra \infty$. Moreover, the function $v_0$ weakly solves the equation
\begin{equation*}
\begin{alignedat}{3}
L_a v_0 &=0 \qu &&\trm{in } \R^{n+1}_+,\\
M_a v_0 &=h_0 \qu &&\trm{on } \R^n,
\end{alignedat}
\end{equation*}
for a function $h_0\in L^2_{loc}(\R^n)$ satisfying $h_0\ge \la>0$, where $\lambda$ is the constant in
\eqref{const-lambda}.
\end{lemma}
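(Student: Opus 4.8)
The plan is to establish compactness of the blowup family $\{v_{r_j}\}$ via the optimal regularity estimate (\Cref{lem-optreg}) combined with the rescaled energy estimate \eqref{ineq-energy-r}, and then pass to the limit in the weak formulation to identify the equation satisfied by $v_0$. First I would show that the $C^{1-a}$-norms of $v_r$ are uniformly bounded on each half-ball $\ov B_R^+$, uniformly in $r$. To do this, fix $R>0$ and consider $r$ small enough that $B_{2R}^+\subset B_{1/r}^+$. Rescaling \Cref{lem-optreg} to the ball $B_{2R}^+$ gives
\begin{align*}
\norm{v_r}_{C^{1-a}(\ov B_R^+)}\le C\(\norm{v_r}_{L^\infty(B_{2R}^+,y^a)}+\norm{f_r\chi_{D_r}-g_r\chi_{D_r^c}}_{L^\infty(\Ga_{2R}^0)}\),
\end{align*}
with $C=C(n,a,R)$. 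The right-hand side is bounded: the boundary term is controlled by $\max\{\norm f_{L^\infty},\norm g_{L^\infty}\}$ on $\Ga_{r_0}^0$ (note $f_r,g_r$ are just $f,g$ evaluated at rescaled points), and the interior $L^\infty$ bound on $v_r$ follows from the assumption that $v$ is a bounded weak solution together with the scaling $v_r(X)=v(rX)/r^{1-a}$ and the non-degeneracy/optimal regularity already available — more precisely, boundedness of $v_r$ in $B_{2R}^+$ for all small $r$ is exactly the content of the optimal regularity estimate at scale $r$ around the free boundary point $0$, i.e. $|v(X)|\le C|X|^{1-a}$, which gives $|v_r(X)|=|v(rX)|/r^{1-a}\le C|X|^{1-a}\le C(2R)^{1-a}$.

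Second, with uniform $C^{1-a}$ bounds in hand, I would apply Arzel\`a--Ascoli on an exhaustion $B_1^+\subset B_2^+\subset\cdots$ together with a diagonal argument to extract a subsequence $r_j\to 0$ such that $v_{r_j}\to v_0$ in $C^\ga(\ov B_R^+)$ for every $\ga<1-a$ and every $R>0$, with $v_0\in C^{1-a}_{loc}(\R^{n+1}_+)$. The rescaled Caccioppoli inequality \eqref{ineq-energy-r} (applied on each $B_R^+$ after a further rescaling) gives a uniform bound on $\int_{B_R^+}|\D v_{r_j}|^2 y^a\dd X$, so after passing to a further subsequence $v_{r_j}\rightharpoonup v_0$ weakly in $H^1(B_R^+,y^a)$, and the weak limit is consistent with the uniform limit.

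Third, I would identify the limiting equation. Since $L_a v_{r_j}=0$ in $B_{1/r_j}^+$ weakly and $v_{r_j}\rightharpoonup v_0$ in $H^1_{loc}(\R^{n+1}_+,y^a)$, testing against $\vp\in C_c^\infty(\R^{n+1}_+)$ and passing to the limit yields $L_a v_0=0$ weakly in $\R^{n+1}_+$. For the boundary condition, write $h_j:=f_{r_j}\chi_{D_{r_j}}-g_{r_j}\chi_{D_{r_j}^c}=M_a v_{r_j}$ on $\Ga^0_{1/r_j}$. By \eqref{const-lambda}, on $\Ga^0_R$ (for $j$ large) we have $h_j\ge\eta_0>0$ and $\norm{h_j}_{L^\infty(\Ga_R^0)}\le\max\{\norm f_{L^\infty},\norm g_{L^\infty}\}$, so $\{h_j\}$ is bounded in $L^2_{loc}(\R^n)$ and, up to a subsequence, $h_j\rightharpoonup h_0$ weakly in $L^2_{loc}(\R^n)$ with $h_0\ge\eta_0=:\la>0$ preserved under weak convergence (a closed convex constraint). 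Testing the weak formulation
\begin{align*}
\int_{B_R^+}(\D v_{r_j}\cdot\D\vp)\,y^a\dd X+\int_{\Ga_R^0}h_j\vp\dd x=0
\end{align*}
against $\vp\in C^1(\ov B_R^+)$ vanishing on $\Ga_R^+$, and using $\D v_{r_j}\rightharpoonup\D v_0$ weakly in $L^2(B_R^+,y^a)$ together with $h_j\rightharpoonup h_0$ weakly in $L^2(\Ga_R^0)$, passes to the limit and shows $M_a v_0=h_0$ on $\R^n$ in the sense of \Cref{def-weak-local}.

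The main obstacle I anticipate is the uniform interior $L^\infty$ bound on $v_{r_j}$, equivalently the optimal (growth) regularity $|v(X)|\le C|X|^{1-a}$ at the free boundary point. This is precisely what makes \Cref{lem-optreg} applicable at every dyadic scale, and without it the blowup sequence could be unbounded and the argument collapses; a careful iteration of the rescaled estimate \Cref{lem-optreg} combined with \eqref{ineq-energy-r} is needed here. A secondary subtlety, relevant only when $a<0$ (so $s<\2$), is the hypothesis $Dv(0)=0$: this is used to ensure the blowup is taken correctly with the $r^{1-a}$ normalization rather than a linear one, and one should check that this assumption is compatible with $0\in\cF$ being the kind of free boundary point under consideration — this is where the distinction $s\le\2$ versus $s>\2$ enters, as remarked in the introduction.
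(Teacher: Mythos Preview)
Your proposal is correct and follows essentially the same approach as the paper: apply \Cref{lem-optreg} to get uniform $C^{1-a}$ bounds on the $v_r$, use \eqref{ineq-energy-r} for the weak $H^1$ compactness, and extract a weak $L^2_{loc}$ limit $h_0\ge\eta_0$ of the right-hand sides $h_r=f_r\chi_{D_r}-g_r\chi_{D_r^c}$ to identify the limiting equation. Your write-up is in fact considerably more detailed than the paper's (the paper compresses all of your first two steps into a single sentence). Two minor corrections: first, no ``careful iteration'' of \Cref{lem-optreg} is needed---a single application to $v$ on $B_{r_0}^+$ gives $v\in C^{1-a}$ near $0$, and since $v(0)=0$ (and $Dv(0)=0$ when $a<0$, by assumption) this immediately yields $|v(X)|\le C|X|^{1-a}$; second, you have a sign slip in your final paragraph: $a<0$ corresponds to $s>\tfrac12$, not $s<\tfrac12$.
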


\begin{proof}
The convergence in $C^\ga(\ov{B}_R^+)$ and the limit $v_0\in C^{1-a}_{loc}(\R_+^{n+1})$ follow from \Cref{lem-optreg} since $a\not=0$, and the weak convergence is a consequence of the estimate \eqref{ineq-energy-r}.
Thus it suffices to show the second part. Let
$h_r:=f_r\chi_{D_r} -g_r \chi _{D_r^c}$. Observe that $h_r \in L^2_{loc}(\R^n)$ and $h_r \ge \la >0$.
Then there exists a weakly convergent subsequence $\{h_{r_j}\}_{j\in \mathbb{N}}$ and a function $h_0\in
L^2_{loc}(\R^n)$ such that $h_{r_j} \rightharpoonup h_0$ and therefore $h_0\ge \la >0$. \end{proof}

The function $v_0$ is called a \emph{blowup} of $v$ at the origin. For a later use, we also introduce
blowups over a sequence. Let $\{x_j\}$ be a convergent sequence whose limit is $x_0$. We consider the
limits $v_{r_j,x_j}\ra v_0$ in $C^{1-a}_{loc}(\R^{n+1}_+)$ as $j\ra \infty$, where
$v_{r_j,x_j}(x)=v(x_j+r_jx)/r^{1-a}$.
We call such $v_0$ a blowup over the sequence $x_j\ra x_0$.\\

In the case of $a=0$, the scaled functions $\{v_r\}$ may not be uniformly bounded in $L^\infty(B_1^+)$
so that we consider slightly different functions. For this, it will be useful to define the following
quantity:
\begin{align*}
C_r := \sup _{B_1^+} \fr{v(rX)}{r}.
\end{align*}

\begin{lemma}\label{lem-class}
Let $a=0$, $R>0$, and $v$ be a bounded weak solution of \eqref{eq-v-ball}.
\begin{enumerate}[(i)]
\item If $\displaystyle\sup_{0<r<r_0} C_r<\infty$, then there exist a decreasing subsequence $\{r_j\}$
    converging to zero and a function $v_0$ in $C^{0,1}_{loc}(\R^{n+1}_+)$ such that for any $R>0$,
    $v_{r_j}\ra v_0$ in $C^{\ga}(\ov B_R^+)$ for $\ga<1$ and $v_{r_j} \rightharpoonup v_0$ in weakly
    in $H^1(B_R^+)$ as $j\ra \infty$. Moreover, the function $v_0$ weakly solves the equation
\begin{equation*}
\begin{alignedat}{3}
\De v_0 &=0 \qu &&\trm{in } \R^{n+1}_+,\\
\pa_y v_0 &=h \qu &&\trm{on } \R^n,
\end{alignedat}
\end{equation*}
with the function $h\in L^2_{loc}(\R^n)$ satisfying $h\ge \la >0$, where $\la$ is the constant in
\eqref{const-lambda}.
\item If $\displaystyle \sup_{0<r<r_0} C_r = \infty$, then there exist a decreasing subsequence
    $\{r_j\} $ converging to zero and a function $\tilde v_0$ such that for any $R>0$, $\tilde
    v_{r_j}:=v_{r_j}/C_{r_j} \ra \tilde v_0$ in $C^\ga (\ov B_R^+)$ for $\ga <1 $ and $\tilde
    v_{r_j}\rightharpoonup \tilde v_0$ in weakly in $H^1(B_R^+)$ as $j\ra \infty$. Moreover, there is
    a nonzero vector $(a_1,\cdots,a_n)\in \R^n$ so that
\begin{align}\label{eq-v0}
\tilde v_0(X) = (a_1,\cdots,a_n) \cdot x, \qu \trm{for } X=(x,y)\in \R^n.
\end{align}
\end{enumerate}
\end{lemma}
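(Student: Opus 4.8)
The statement is a compactness-and-classification result for blowups when $a=0$, split into two cases according to whether the scaled Lipschitz constants $C_r$ stay bounded. The plan is to treat case (i) first: since $\sup_r C_r<\infty$, the functions $v_r$ are uniformly bounded in $C^{0,1}(\ov B_1^+)$ (using $v(0)=0$, so $|v(rX)|\le C_r r$ on $B_1^+$, and then rescaling this bound to larger balls), hence by Arzel\`a--Ascoli a subsequence converges in $C^\ga_{loc}(\ov{\R}^{n+1}_+)$ for every $\ga<1$ to a limit $v_0\in C^{0,1}_{loc}$. The energy estimate \eqref{ineq-energy-r} then gives uniform $H^1(B_R^+)$ bounds on each fixed $B_R^+$, yielding the weak $H^1$ convergence after a further diagonal subsequence. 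To identify the equation solved by $v_0$: the Neumann data $h_r=f_r\chi_{D_r}-g_r\chi_{D_r^c}$ satisfies $h_r\ge\la>0$ by \eqref{const-lambda} and is bounded in $L^\infty$, hence bounded in $L^2_{loc}(\R^n)$, so up to a subsequence $h_{r_j}\rightharpoonup h$ weakly in $L^2_{loc}$ with $h\ge\la$; passing to the limit in the weak formulation (Definition~\ref{def-weak-local}) — the bilinear term passes because $\D v_{r_j}\rightharpoonup \D v_0$ weakly in $L^2$ on each half-ball, and the boundary term passes because $v_{r_j}\to v_0$ strongly on $\Ga_R^0$ against the weakly convergent $h_{r_j}$ — gives that $v_0$ is a weak solution of $\De v_0=0$ in $\R^{n+1}_+$ with $\pa_y v_0=h$. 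This is essentially identical to the proof of \Cref{lem-blowup-not0}, the only difference being the source of the uniform local bound.

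For case (ii), when $\sup_r C_r=\infty$, I would renormalize: set $\tilde v_r = v_r/C_r$ (equivalently $\tilde v_r(X)=v(rX)/(r\,C_r)$), so that by construction $\sup_{B_1^+}\tilde v_r = 1$ and thus $\tilde v_r$ is bounded on $B_1^+$; moreover $\tilde v_r(0)=0$ and the Lipschitz seminorm of $\tilde v_r$ on $B_{1/2}^+$ is bounded (by \Cref{lem-optreg0} applied to $v_r$, whose Neumann data $h_r$ is bounded in $L^\infty$, combined with the definition of $C_r$ — one gets $\|\tilde v_r\|_{C^\ga(\ov B_{1/2}^+)}\le C$ uniformly). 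The key observation is that the renormalized Neumann data is $\tilde h_r = h_r/C_r \to 0$ uniformly (since $\|h_r\|_{L^\infty}$ is bounded while $C_r\to\infty$ along the relevant subsequence). Hence the limit $\tilde v_0$, obtained again by Arzel\`a--Ascoli plus the (renormalized) energy estimate, is a global bounded-growth solution of $\De\tilde v_0=0$ in $\R^{n+1}_+$ with homogeneous Neumann condition $\pa_y\tilde v_0=0$ on $\R^n$. Reflecting evenly across $\{y=0\}$ produces an entire harmonic function on $\R^{n+1}$ which is even in $y$; since $\tilde v_0$ inherits at most linear growth from the uniform Lipschitz bound (the $C^{0,1}$ estimate on $B_{1/2}^+$ scales to give $|\tilde v_0(X)|\le C|X|$ on all of $\R^{n+1}_+$), \Cref{lem-liouville} with $k=1$ forces $\tilde v_0$ to be an affine function; evenness in $y$ kills the $y$-coefficient and the constant term (the latter because $\tilde v_0(0)=0$), leaving $\tilde v_0(X)=(a_1,\dots,a_n)\cdot x$. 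Finally, the vector $(a_1,\dots,a_n)$ is nonzero: otherwise $\tilde v_0\equiv 0$, contradicting $\sup_{B_1^+}\tilde v_0 = 1$ which survives the uniform convergence on $\ov B_1^+$.

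I expect the main obstacle to be the uniform Lipschitz/$C^\ga$ bound needed to run Arzel\`a--Ascoli, and in case (ii) the verification that $\tilde v_0$ genuinely has only linear growth on \emph{all} of $\R^{n+1}_+$ rather than just on the fixed half-ball where \Cref{lem-optreg0} applies directly. The resolution is a standard rescaling argument: apply the interior estimate \Cref{lem-optreg0} to $v_{r}$ on half-balls $B_{2R}^+$ (after noting $M_a v_r$ remains $L^\infty$-bounded independently of $r$ and $R$ on the relevant range), which gives $\|v_r\|_{C^\ga(\ov B_R^+)}\le C_R(\|v_r\|_{L^\infty(B_{2R}^+)}+\|h_r\|_{L^\infty})$; in case (i) the first term is controlled by $2R\cdot\sup_r C_r$, while in case (ii) dividing through by $C_r$ controls both terms uniformly. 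One must be slightly careful that the blowup center lies on the free boundary so $\Ga_{2R}^0\Subset\Om$ only for $R$ small relative to $1/r$, but since we only need the bound on each fixed $B_R^+$ and $r_j\to 0$, this holds for $j$ large, which suffices for the diagonal extraction. A minor additional point in case (ii) is to confirm that the weak formulation passes to the limit with the Neumann term: since $\tilde h_{r_j}\to 0$ in $L^\infty_{loc}$ and $\tilde v_{r_j}\to\tilde v_0$ uniformly on compacts, the product $\int_{\Ga_R^0}\tilde h_{r_j}\vp\,dx\to 0$, giving the homogeneous Neumann condition cleanly.
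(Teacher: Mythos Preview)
Your treatment of case (i) is fine and matches the paper. The gap is in case (ii).

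You assert that in the estimate $\|v_r\|_{C^\ga(\ov B_R^+)}\le C_R(\|v_r\|_{L^\infty(B_{2R}^+)}+\|h_r\|_{L^\infty})$, ``dividing through by $C_r$ controls both terms uniformly''. Dividing by $C_r$ does kill the second term, but for the first you obtain
\[
\frac{\sup_{B_{2R}^+}v_r}{C_r}=\frac{\sup_{B_{2R}^+}v_r}{\sup_{B_1^+}v_r}=2R\cdot\frac{C_{2rR}}{C_r},
\]
and nothing in your argument bounds $C_{2rR}/C_r$ along a generic subsequence with $C_{r_j}\to\infty$. The same problem invalidates your claim that ``the $C^{0,1}$ estimate on $B_{1/2}^+$ scales to give $|\tilde v_0(X)|\le C|X|$ on all of $\R^{n+1}_+$'': the estimate on $B_{1/2}^+$ does not propagate outward without control of $\tilde v_{r_j}$ on larger balls, which is precisely what is missing.

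The idea you are lacking is a \emph{maximal} subsequence selection: choose $r_j\searrow 0$ so that $C_{r_j}\ge\sup_{r_j\le r\le r_0}C_r$ (possible since $\sup_r C_r=\infty$). Then for every $1\le R\le r_0/r_j$ one has $C_{r_jR}\le C_{r_j}$, hence
\[
\sup_{B_R^+}\tilde v_{r_j}=\frac{\sup_{B_R^+}v_{r_j}}{C_{r_j}}=R\cdot\frac{C_{r_jR}}{C_{r_j}}\le R.
\]
This single inequality delivers both the uniform local bounds needed to run Arzel\`a--Ascoli on every $B_R^+$ and the linear growth $\tilde v_0(X)\le 1+|X|$ required to invoke \Cref{lem-liouville}. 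This is exactly the device the paper uses; without it your compactness and Liouville steps are unjustified for $R>1$.

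A minor side remark: \Cref{lem-optreg0} only yields $C^\ga$ for $\ga<1$, not a genuine Lipschitz bound, so your references to ``the Lipschitz seminorm'' are slightly off. This is harmless once the growth bound $\sup_{B_R^+}\tilde v_{r_j}\le R$ is available, since Liouville with degree $k=1$ already gives the affine conclusion.
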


\begin{proof}
The first case follows as in the argument in \Cref{lem-blowup-not0}. For the second case, we can choose
a subsequence $\{r_j\}$ such that
$C_{r_j} \ra \infty$ as $j\ra \infty$ and
\begin{align*}
C_{r_j} \ge \sup_{r_j\le r\le r_0} C_r.
\end{align*}
Then for any $0<R < r_0/r_j$, we have
\begin{align*}
\int_{B_R^+} \D \tilde v_{r_j}\cdot \D \vp \dd X \le
\fr{\max\{\norm{f}_{L^\infty(\Ga_R^0)},\norm{g}_{L^\infty(\Ga_R^0)}\}}{C_{r_j}}\int_{\Ga_R^0}|\vp|\dd x
\end{align*}
for all $\vp \in C^1(\ov B_R^+)$ such that $\vp \equiv 0$ on $\Ga_R^+$. Moreover, we observe that
$\tilde v_{r_j}(0)=0$, $\sup_{B_1^+} \tilde v_{r_j} =1$, and for $R\ge 1$,
\begin{align*}
\sup_{B_R^+} \tilde v_{r_j} =
\fr{\sup_{B_R^+}v_{r_j}}{C_{r_j}}=\fr{\sup_{B_1^+}v_{r_jR}}{C_{r_j}}R=\fr{C_{r_jR}}{C_{r_j}}R\le R.
\end{align*}
From \Cref{lem-optreg0} and \Cref{lem-EE}, there exist a subsequence, again denoted by $\{r_j\}$, and a
function $\tilde v_0$ in $C^{0,1}_{loc}(\ov \R^{n+1}_+)$ such that for any $R>0$, $v_{r_j} \ra v_0$ in
$C^\ga(\ov B_R^+)$ for $\ga<1$ and $\tilde v_{r_j} \rightharpoonup \tilde v_0$ in weakly in $H^1(B_R^+)$
as $j\ra \infty$. Thus we have $\tilde v_0(0)=0$, $\sup_{B_1^+} \tilde v_0 =1$, $\sup_{B_R^+}\tilde
v_0\le R$, and
\begin{align*}
\int_{B_R^+} \D \tilde v_0 \cdot \D\vp \dd X = 0
\end{align*}
for all $\vp\in C^1(\ov B_R^+)$ such that $\vp \equiv 0$ on $\Ga_R^+$. Notice that the last equality
follows by considering $-\vp$ instead of $\vp$. If we evenly reflect $\tilde v_0$ across $\{y=0\}$, then
the new function, still denoted by $\tilde v_0$,  is harmonic in $\R^{n+1}$ satisfying
\begin{align*}
\tilde v_0(X) \le 1+|X| \qu \trm{for all } X\in \R^{n+1}.
\end{align*}
By the Liouville type result from \Cref{lem-liouville}, $\tilde v_0$ is a polynomial of degree at most
one. Now \eqref{eq-v0} can be deduced from the fact $\tilde v_0(0)=0$, $\sup_{B_1} \tilde v_0=1$, and
$\tilde v_0$ is even in $y$-variable.
\end{proof}

\section{Nondegeneracy}\label{sec:nondeg}
In this section we will show nondegeneracy, which will imply that blowups are not identically zero over
$\R^n$. Notice that from the optimal regularity near $\pa D$, we see that $|t-u(x)| \le C \dist (x,\pa D)^{2s}$ for some constant $C$. Nondegeneracy gives the opposite inequality.
More precisely:
\begin{lemma}\label{lem-nondeg-pre}
Let $(u,D)$ be an optimal pair and $a\not=0$. It holds:
\begin{enumerate}[(i)]
\item There exist positive constants $c_0$ and $C_0$ such that if $x\in \{u<t\}$ and $\dist(x,\pa D)
    \le c_0$, then
\begin{align}\label{nondeg-1}
u(x)\le t- C_0\dist (x,\pa D)^{2s}.
\end{align}
\item There exist positive constants $c_0$ and $C_0$ such that if $x\in \{u>t\}$ and $\dist(x,\pa D)
    \le c_0$, then
\begin{align}\label{nondeg-2}
u(x)\ge t+ C_0\dist (x,\pa D)^{2s}.
\end{align}
\end{enumerate}
\end{lemma}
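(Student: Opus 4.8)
\textbf{Proof plan for Lemma \ref{lem-nondeg-pre}.}
The plan is to prove the nondegeneracy statement (i), with (ii) following by an entirely symmetric argument applied to $u-t$ in the region $\{u>t\}$ (using $g<0$, $f+g<0$ in place of $f>0$). Work with $v=t-u$ and the extension problem \eqref{eq-v}, so that $v$ solves $L_a v=0$ in $B_{r_0}^+$ and $M_a v = f\chi_D - g\chi_{D^c}$ on $\Ga_{r_0}^0$, with $f\ge\eta_0>0$, $g\le-\eta_0<0$ near $\cF$, and $\{v>0\}\su D\su\{v\ge0\}$. Fix a point $x_0\in\{u<t\}=\{v>0\}$ with $d:=\dist(x_0,\pa D)\le c_0$ small. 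Since $v(x_0)>0$ and all nearby points of $\{v<0\}$ lie outside the ball $B_d(x_0)$ on the trace, the idea is to compare $v$ on $B_d^+(x_0)$ with an explicit barrier whose $M_a$ is comparable to $\eta_0$ and whose boundary values on $\Ga_d^+(x_0)$ are controlled by the optimal regularity bound $|v|\le C d^{2s}$ on $\Ga_d^+(x_0)$ (from \Cref{lem-optreg}); then evaluating at the center gives $v(x_0)\ge C_0 d^{2s}$, which is \eqref{nondeg-1}.

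More concretely, I would follow the approach of \cite{CRS}: rescale to unit size by setting $w(X)=v(x_0+dX)/d^{1-a}$ on $B_1^+$, so that $L_a w=0$ in $B_1^+$ and $M_a w = f(x_0+dx)\chi_{D_d}-g(x_0+dx)\chi_{D_d^c}\ge \eta_0>0$ on $\Ga_1^0$, while $\sup_{\Ga_1^+}|w|\le C$ by \Cref{lem-optreg} (this is where the hypothesis $a\ne0$ is used — we need the $C^{1-a}$ bound, equivalently the $C^{2s}$ bound on $u$, which fails at $a=0$; that is why the case $s=\2$ is excluded here and handled separately later). Then construct a subsolution $\psi$ on $B_1^+$ with $L_a\psi\le 0$ in $B_1^+$, $M_a\psi\le \eta_0$ on $\Ga_1^0$, and $\psi\le -C$ on $\Ga_1^+$, normalized so that $\psi(0)=-\kappa<0$ for a dimensional constant $\kappa$; the maximum principle for $L_a$ then forces $w(0)\ge w(0)-\psi(0)-\kappa\ge \kappa$... more carefully, $w-\psi$ satisfies $L_a(w-\psi)\ge0$, $M_a(w-\psi)\ge0$, and $w-\psi\ge0$ on $\Ga_1^+$, hence $w-\psi\ge0$ on $B_1^+$ and in particular $w(0)\ge\psi(0)+$(value forced by the Neumann gain). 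Unrolling the scaling yields $v(x_0)=d^{1-a}w(0)\ge c\,d^{1-a}\cdot d^{a}\cdot$(something)$\,=C_0 d^{2s}$ once the barrier is chosen so its normalized height scales correctly — the bookkeeping here is that $1-a+a\cdot?$; the cleanest route is to build the barrier directly at scale $d$ rather than rescaling, i.e. take $\Phi(X)=v(X)-\e|x-x_0|^2 + $ (a fractional-harmonic correction handling the $y$-direction) and optimize $\e\sim\eta_0$.

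The barrier construction is the main obstacle. One needs a function on $B_d^+(x_0)$ that is an $L_a$-subsolution, whose $M_a$ on the flat part is $\le \eta_0$ (so that $M_a(v-\Phi)\ge 0$ there), and whose values on the spherical part $\Ga_d^+(x_0)$ are below $-Cd^{2s}$ so it beats the possibly-negative boundary values of $v$; simultaneously $\Phi(x_0)$ should be $\gtrsim \eta_0 d^{2s}$ in absolute value. A natural candidate is a multiple of $d^{2s}$ times a fixed profile $\phi(X/d)$, where $\phi$ solves $L_a\phi=0$ in $B_1^+$ with $M_a\phi\equiv 1$ on $\Ga_1^0$ (so $\phi$ is, up to constants, the extension of the torsion-type solution $-(-\Delta)^{-s}$ of a constant on a ball, which behaves like $|x|^{2s}$ near the center) minus a large constant to push it negative on $\Ga_1^+$; one checks $\phi(0)<0$ strictly, and the scaling $M_a(d^{2s-1+?}\phi(\cdot/d))$ produces exactly the constant $\eta_0$ when the exponent is matched, using $M_a$ homogeneity of degree $2s-1$ in the $y$-variable combined with the $d^{2s}$ prefactor. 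I would verify the existence and the sign $\phi(0)<0$ of this profile via \Cref{lem-optreg} and the strong maximum principle, then the comparison principle for weak solutions of \eqref{def-weak-local} closes the argument. The regularity upper bound $|t-u(x)|\le C\dist(x,\pa D)^{2s}$ quoted in the statement follows immediately from \Cref{lem-optreg} (respectively \Cref{lem-blowup-not0}) applied on balls of radius $\dist(x,\pa D)$ centered at the nearest free boundary point.
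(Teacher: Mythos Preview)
Your plan takes a genuinely different route from the paper, and I do not think it closes as stated. The paper's proof is \emph{variational}: it fixes $x_0\in\{u<t\}$ with $d_0=\dist(x_0,\partial D)$ and $\beta=t-u(x_0)$, applies Harnack (for the extension) to get $t-u\sim\beta$ on $B_{d_0/2}(x_0)$, then builds a competitor $\tilde u=\max\{u,\,t-\overline c\beta\psi\}$ on $B_{d_0/2}$ with a cutoff $\psi$. Using the minimality of the Rayleigh quotient (i.e.\ of $\Lambda$), one gets
\[
(\Lambda-\alpha)\Big(\int_\Omega \tilde u^2-\int_\Omega u^2\Big)\;\le\;\|(-\Delta)^{s/2}\tilde u\|^2-\|(-\Delta)^{s/2}u\|^2,
\]
and a careful estimate of both sides yields $(\Lambda-\alpha)\,t\,\underline c\lesssim \beta d_0^{-2s}+d_0^{2-2s}$, hence $\beta\gtrsim d_0^{2s}$ for $d_0$ small. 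The optimality of the pair $(u,D)$ is used in an essential way; the argument is not a PDE comparison.

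Your barrier scheme, by contrast, uses only the equation and the optimal regularity bound $|w|\le C$ on $\Gamma_1^+$. The difficulty you yourself flag (``$w(0)\ge\psi(0)+{}$(value forced by the Neumann gain)'') is real: if you solve $L_a\psi=0$ in $B_1^+$ with $M_a\psi=\eta_0$ on $\Gamma_1^0$ and $\psi=-C$ on $\Gamma_1^+$, then $\psi(0)=-Cc_1+\eta_0c_2$ for dimensional constants $c_1,c_2>0$, and there is no reason for this to be positive --- the regularity constant $C$ from \Cref{lem-optreg} depends on $\|f\|_\infty,\|g\|_\infty,\|v\|_\infty$, while $\eta_0$ is an independent lower bound. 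So the comparison only gives $w(0)\ge\psi(0)$, which may be negative. Knowing $w\ge0$ on $\Gamma_1^0$ (from $B_d(x_0)\subset D\subset\{v\ge0\}$) does not by itself rescue this, since on the spherical cap $\Gamma_1^+$ the extension can be negative.

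There is also a structural point: what you call ``the approach of \cite{CRS}'' is the \emph{iterative} growth argument that the paper uses for the next lemma (\Cref{lem-nondeg}), and that argument takes \Cref{lem-nondeg-pre} as input. So the logical order is: first prove the pointwise lower bound \eqref{nondeg-1}--\eqref{nondeg-2} by the competitor/minimality argument, then feed it into the CRS iteration. Your plan would have to supply the missing quantitative input some other way; as written it does not, and the natural source --- the minimality of $(u,D)$ --- is absent from your outline.
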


\begin{proof}
Fix a point $x_0$ in $\{u<t\}$, and let $d_0=\dist(x_0,\pa D)>0$ and $\beta=t-u(x_0)>0$. We may assume $x_0$ is the origin. Denote by $u_1$  the extension of $u$ to $\R^{n+1}_+$ through \eqref{EL-E}. We set
$w:=t-u_1(x,y)+t(\La-\al)(1-a)^{-1}y^{1-a}$ with $1-a=2s$, which satisfies
$$L_a w=0\quad\text{and} \quad-M_aw=(\La-\al)w.$$
Applying Harnack's inequality (see
\cite{TX}) to   $w$ in a neighborhood of $x_0$, we have
\begin{align*}
\underline{c} \beta \le t-u(x) \le \overline{c} \beta \qu \trm{in } B_{d_0/2}
\end{align*}
for some positive constants $\underline{c}$ and $\overline{c}$.
Now we define
\begin{align*}
\tilde u(x)=\begin{cases}
\max\{ u(x),t-\ov{c}\beta \psi(x)\} \qu &\trm{if }x\in B_{d_0/2},\\
u(x) \qu &\trm{otherwise},
\end{cases}
\end{align*}
where
$\psi$ is a radial cut-off function such that $\psi\equiv0$ in $B_{d_0/4}$ and $\psi \equiv 1$ outside
$B_{d_0/2}$.

We are going to use the following inequality: given $A',A'',B',B''>0$, if $B'/A' \le B''/A''$, then
$(A''-A')B'/A' \le B''-B'$. From this inequality, together with the minimality of $\La$, we have
\begin{align*}
\La\int_\Om \tilde{u}^2\dd x - \La\int_\Om u^2\dd x \le \|(-\De)^{s/2}\tilde u\|^2-\|(-\De)^{s/2}
u\|^2 +\al \int_D \tilde{u}^2\dd x - \al \int _D u^2\dd x.
\end{align*}
Since $\tilde u\ge u\ge0$ and $D\su \Om$, we arrive at
\begin{align}\label{ineq-minimality}
(\La-\al)\left(\int_\Om \tilde{u}^2\dd x - \int_\Om u^2\dd x\right) \le \|(-\De)^{s/2}\tilde
u\|^2-\|(-\De)^{s/2} u\|^2.
\end{align}
To  further proceed, we observe that
\begin{align*}
\begin{split}
\int_\Om \tilde{u}^2\dd x - \int_\Om u^2 \dd x&\ge \int_{B_{d_0/4}}t^2-(t-\underline{c}\beta)^2
=|B_{d_0/4}|(2t \underline{c}\beta-\underline{c}^2 \beta^2),
\end{split}
\end{align*}
and that for $K:=\{x\in B_{d_0/2}: u(x)<t-\ov{c} \beta \psi(x)\}$ the following inequalities hold:
\begin{enumerate}[(i)]
\item if $x\in K$ and $y \in K^c \cap B_{d_0/2}$ then
\begin{equation*}
\begin{split}
(\tilde{u}(x)-\tilde{u}(y))^2-({u}(x)-{u}(y))^2&=(t-\ov c \beta \psi(x)-u(x))(t-\ov{c} \beta \psi(x)
+u(x)-2u(y))\\
&\le 2(1- \psi(x))(\ov c\beta )^2(\psi(y)-\psi(x)),
\end{split}
\end{equation*}
\item if $x\in K$ and $y \in K^c \cap B_{d_0/2}^c=B_{d_0/2}^c$ then
\begin{equation*}
\begin{split}
(\tilde{u}(x)-\tilde{u}(y))^2-({u}(x)-{u}(y))^2&=(t-\ov c \beta \psi(x)-u(x))(t-\ov{c} \beta \psi(x)
+u(x)-2u(y))\\
&\le (t-\ov c \beta \psi(x)-u(x))^2 +2\overline c \beta(1-\psi(x))(u(x)-u(y))\\
&\le (\ov c \beta)^2(1-\psi(x))^2+2\overline c \beta(1-\psi(x))(u(x)-u(y)).
\end{split}
\end{equation*}
\end{enumerate}
Notice also that
\begin{align*}
&\int_{\R^n}\int_{\R^n}\fr{(\tilde{u}(x)-\tilde{u}(y))^2}{|x-y|^{n+2s}}\dd x \dd y-\int_{\R^n}\int_{
\R^n}\fr{(u(x)-u(y))^2}{|x-y|^{n+2s}}\dd x \dd y\\
&\le (\ov{c}\beta)^2\int_K \int_K \fr{|\psi(x)-\psi(y)|^2}{|x-y|^{n+2s}}\dd y \dd x\\
&+4(\ov c \beta)^2 \int_K \int_{
K^c\cap B_{d_0/2}}\fr{(1-\psi(x))(\psi(y)-\psi(x))}{|x-y|^{n+2s}}\dd y \dd x\\
&\qu +2(\ov{c}\beta)^2 \int_K\int_{B_{d_0/2}^c}\fr{(\psi(y)-\psi(x))^2}{|x-y|^{n+2s}}\dd y \dd x \\
&+4\ov c \beta
\int_K\int_{B_{d_0/2}^c}\fr{(1-\psi(x))(u(x)-u(y))}{|x-y|^{n+2s}}\dd y \dd x\\
&=: I_1+I_2+I_3+I_4.
\end{align*}
By definition of $\psi$, we see that $\tilde\psi(x):=\psi(d_0x/2)$ is a radial cut-off function
which is independent of $d_0$. Thus we have
\begin{align*}
I_1+I_2+I_3 \le C \beta^2 d_0^{n-2s},
\end{align*}
where the constant $C$ does not depend on $\beta$ and $d_0$. To estimate $I_4$, we may assume that
$1-\psi(x)=O((d_0/2-|x|)^2)$ and that $\norm{u}_{L^\infty}=1$, which implies
\begin{align*}
\begin{split}
I_4 &\le C \beta  \int_K\int_{B_{d_0/2}^c}\fr{(d_0/2-|x|)^2}{|x-y|^{n+2s}}\dd y \dd x\le C \beta
\int_K\int_{B_{d_0/2-|x|}^c(x)}\fr{(d_0/2-|x|)^2}{|x-y|^{n+2s}}\dd y \dd x\\
&\le C \beta  \int_K (d_0/2-|x|)^{2-2s}\dd x \le C \beta  d_0^{n+2-2s}.
\end{split}
\end{align*}
Combining these facts, together with \eqref{ineq-minimality}, we obtain
\begin{align*}
(\La-\al) |B_{d_0/4}|(2t\underline{c}\beta-\underline{c}^2\beta^2 )\le C\beta^2 d_0^{n-2s}+C\beta
\norm{u}_{L^\infty} d_0^{n+2-2s}.
\end{align*}
By the optimal regularity, we can take small $c_0$ so that $\beta\le \norm{u}_{C^{2s}}d_0^{2s}\le \underline c t$, which gives
\begin{align*}
(\La-\al) t\underline{c}\le C\beta d_0^{-2s}+C \norm{u}_{L^\infty} d_0^{2-2s}.
\end{align*}
Again, taking small $c_0$, we conclude that
\begin{align*}
\beta \ge C_0 d_0^{2s}
\end{align*}
for some constant $C_0$. This completes the proof of \eqref{nondeg-1}. Since the proof of
\eqref{nondeg-2} is similar to that of \eqref{nondeg-1}, we omit the details here.
\end{proof}
Using the previous Lemma, together with a blowup argument, we are able to  show nondegeneracy. Let us denote the nearest
point to $x$ in $\pa D$ by $\tilde x$ so that $\dist(x,\pa D)=\dist(x,\tilde x)$. The argument follows
as in \cite{CRS}:

\begin{lemma}\label{lem-nondeg}
Let $(u,D)$ be an optimal pair and $a>0$. Take $x_0$ to be a point in $\pa D$.
Then there is a constant $C$, independent of $u$, such that
\begin{align*}
\sup_{B_r(x_0)}|t-u|\ge C r^{2s}.
\end{align*}
\end{lemma}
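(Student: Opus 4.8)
The plan is to deduce Lemma~\ref{lem-nondeg} from the pointwise nondegeneracy estimate of Lemma~\ref{lem-nondeg-pre} together with a blowup/compactness argument. First I would reduce to the case where $x_0$ is a \emph{limit} of interior points: if $B_r(x_0)$ already contains a point $x$ of $\{u<t\}$ with $\dist(x,\pa D)\ge r/4$, say, then Lemma~\ref{lem-nondeg-pre}(i) immediately gives $t-u(x)\ge C_0(r/4)^{2s}$, hence $\sup_{B_r(x_0)}|t-u|\gtrsim r^{2s}$; symmetrically if $B_r(x_0)$ meets the deep part of $\{u>t\}$ via part (ii). So the only delicate situation is when near $x_0$ the set $\{u=t\}$ (equivalently $\pa D$) is ``fat'', i.e. the level set occupies most of every small ball. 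This is exactly where the blowup enters.

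The main step is to show such fatness is impossible by contradiction: suppose there is a sequence of optimal pairs (or a sequence of radii $r_j\to 0$) and points $x_j\to x_0\in\pa D$ along which $\sup_{B_{r_j}(x_j)}|t-u|/r_j^{2s}\to 0$. Form the rescalings $v_{r_j,x_j}(X)=v(x_j+r_jX)/r_j^{1-a}$ (with $v=t-u$, $1-a=2s$) as in Section~\ref{sec:blowups}. By Lemma~\ref{lem-blowup-not0} (here $a\not=0$, in fact $a>0$, so we are in the regime $s<\tfrac12$ covered by that lemma) a subsequence converges in $C^\gamma_{loc}$ to a blowup $v_0$ solving $L_a v_0=0$ in $\R^{n+1}_+$ with $M_a v_0=h_0\ge\lambda>0$ on $\R^n$. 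By our contradiction hypothesis the traces satisfy $\sup_{B_1^+}|v_{r_j,x_j}|\to 0$ on the relevant scale, forcing $v_0\equiv 0$. But then $0=M_a v_0=h_0\ge\lambda>0$, a contradiction. This is essentially the argument of \cite{CRS}; the key inputs are the uniform optimal regularity (\Cref{lem-optreg}), which makes the rescaled family precompact, and the strict positivity $f+g\le-\eta_0<0$ in \eqref{const-lambda}, which survives the limit and kills the trivial blowup.

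To turn this into the stated quantitative bound one argues in the standard way: the set of scales $r$ for which $\sup_{B_r(x_0)}|t-u|\le \varepsilon r^{2s}$ cannot accumulate at $0$ once $\varepsilon$ is small enough — otherwise one could extract a blowup sequence as above — so there is a uniform $C>0$ with $\sup_{B_r(x_0)}|t-u|\ge C r^{2s}$ for all small $r$, and the dependence on $u$ is removed because every constant used (the ellipticity $\eta_0$, the regularity norm $\norm{u}_{C^{2s}}$ via $\La\le\mu_\Om+\al$ and global boundedness, and the constants $c_0,C_0$ of \Cref{lem-nondeg-pre}) is controlled purely in terms of $\Om,\al,A$. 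I would also note that the iteration/Harnack dichotomy at a free boundary point $x_0\in\pa D$ between ``$x_0$ is approached by $\{u<t\}$'' and ``by $\{u>t\}$'' is handled by applying part (i) or part (ii) of \Cref{lem-nondeg-pre} accordingly, using the nearest-point notation $\tilde x$.

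The main obstacle I anticipate is the precompactness and nontriviality of the blowup precisely in the range $a>0$ ($s<\tfrac12$): one must verify that $\sup_{B_{r_j}(x_j)}|t-u|\to 0$ relative to $r_j^{2s}$ actually implies $v_0\equiv 0$ rather than merely $v_0(0)=0$, which requires the full strength of the $C^{1-a}$ estimate of \Cref{lem-optreg} applied on dyadic annuli (so that smallness propagates to all of $B_R^+$ for fixed $R$ after scaling), and then matching this against the lower bound $h_0\ge\lambda$ coming from \eqref{const-lambda}. The case $a\le 0$ is deliberately excluded, consistent with the paper's remark that non-degeneracy is only controllable for $s\in(0,1/2]$, and in fact here we restrict to $a>0$; the borderline $a=0$ ($s=\tfrac12$) would need the separate analysis of \Cref{lem-class}(ii) and is not claimed in this lemma.
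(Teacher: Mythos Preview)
Your approach is genuinely different from the paper's, and it has a real gap. The paper does \emph{not} argue by a single blowup contradiction. Instead, following \cite{CRS}, it proves an intermediate growth claim: for any $x_1\in\{u<t\}$ with $d_1=\dist(x_1,\pa D)$ small, there exist constants $\de,M>0$ independent of $x_1$ such that $\sup_{B_{Md_1}(\tilde x_1)}(t-u)\ge(1+\de)(t-u(x_1))$. (The blowup from \Cref{lem-blowup-not0} is used only inside this claim, to rule out a global bound on the blowup trace.) Iterating this produces a sequence $x_j$ with $t-u(x_j)\ge(1+\de)^{j}(t-u(x_0))$ and $|x_j-x_{j-1}|\le(M+1)d_{j-1}$; the sequence must exit $B_r(x_0)$ at some first index $j_0$, and then one sums:
\[
t-u(x_{j_0})\ge \de\sum_{j\le j_0}(t-u(x_{j-1}))\ge \de\, C_0(M+1)^{-2s}\sum_{j\le j_0}|x_j-x_{j-1}|^{2s}.
\]
The \emph{essential} use of the hypothesis $a>0$ (i.e.\ $2s\le 1$) is the subadditivity $\sum_j|x_j-x_{j-1}|^{2s}\ge\bigl|\sum_j(x_j-x_{j-1})\bigr|^{2s}\ge r^{2s}$, which closes the argument. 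Your proposal never invokes $2s\le1$ in any structural way, which is already a warning sign: the paper singles out exactly this step as the reason the restriction to $s\le\tfrac12$ is needed.

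The specific gap in your argument is the passage from ``$\sup_{B_{r_j}(x_0)}|t-u|/r_j^{2s}\to0$'' to ``$v_0\equiv0$ on $\R^{n+1}_+$''. The contradiction hypothesis only controls the \emph{trace} $v_{r_j}(\cdot,0)$ on $\Ga_1^0$, not $v_{r_j}$ on $B_1^+$; and it says nothing about $v_0(\cdot,0)$ on $\Ga_R^0$ for $R>1$, since $\sup_{B_{Rr_j}}|v|/r_j^{2s}$ is merely bounded (by optimal regularity), not small. Even granting $v_0(\cdot,0)\equiv0$ on all of $\R^n$, the function $v_0(X)=c\,y^{1-a}$ with $c>0$ satisfies $L_a v_0=0$, has zero trace, growth $|v_0|\le C|X|^{1-a}$, and $M_a v_0=c(1-a)>0$, so the conditions $M_a v_0=h_0\ge\la>0$ do not by themselves yield a contradiction. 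Your ``dyadic annuli'' remark anticipates the difficulty but does not resolve it. Likewise, the opening reduction (``if $B_r(x_0)$ contains a point with $\dist(\cdot,\pa D)\ge r/4$ we are done'') does not exhaust the cases: one can have both phases occupying $B_r$ while every point is within $r/4$ of $\pa D$, so the iterative construction is genuinely needed, not just a convenience.
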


\begin{proof}
Let $x_0\in \pa D$ and $B_r(x_0)\su \Om$. Let $x_1\in B_{r}(x_0)$ such that $u(x_1)<t$ and
$d_1:=\dist(x_1,\pa D)<c_0$ where the constant $c_0$ is defined in \Cref{lem-nondeg-pre}. By the same Lemma,
\begin{align*}
\tau:=\fr{t-u(x_1)}{d_1^{2s}}\ge C_0.
\end{align*}
We claim that there exist constants $\de>0$ and $M>0$ which are independent of $x_1$ and such that
\begin{align*}
\sup_{B_{Md_1}(\tilde x_1)}(t-u(x)) \ge (1+\de) \tau d_1^{2s}.
\end{align*}
If not, we can take a sequence $x_k$ with $d_k:=\dist(x_k,\pa D)$ so that
\begin{align}\label{ineq-contra-seq}
\sup_{B_{k d_k}(\tilde x_k)}(t-u(x))\le \left(1+\fr{1}{k}\right) \tau d_k^{2s}.
\end{align}
Now we define
\begin{align*}
v_{k,\tilde x_k}(x)=\fr{t-u(\tilde x_k+\dist(x,\pa D)x)}{\dist(x,\pa D)^{2s}}.
\end{align*}
In terms of $v_{k,\tilde x_k}$, \eqref{ineq-contra-seq} becomes
\begin{align*}
\sup_{B_k(0)}v_{k,\tilde x_k} \le \left( 1+\fr{1}{k}\right) \tau.
\end{align*}
Then, passing to the limit, we have a limit $v_0$ satisfying $\sup_{\R^n}v_0\le \tau$, $v_0(0)=0$, and
$v_0(z)=\tau>0$ for some $|z|=1$, which is a contradiction.

Using the claim, we construct a sequence $\{x_j\}$ such that $|x_j-x_{j-1}|\le (M+1)d_{j-1}$ and
$t-u(x_j) \ge (1+\de) (t-u(x_{j-1}))$. Since $\de>0$ does not depend on $x_j$, we deduce that there is
an index $j$ so that $x_j$ exits from $B_r(x_0)$. Assume that $j_0$ is the first index such that
$x_{j_0}\in B_r(x_0)$ and $x_{j_0+1}\not\in B_r(x_0)$. Then we have
\begin{align*}
t-u(x_{j_0})=\sum _{j\le j_0}(t-u(x_j)-(t-u(x_{j-1})))\ge\de \sum_{j\le j_0}(t-u(x_{j-1})).
\end{align*}
By \Cref{lem-nondeg-pre}, we see that $t-u(x_{j-1}) \ge C_0  d_{j-1}^{2s}\ge
C_0(M+1)^{-2s}|x_j-x_{j-1}|^{2s}$. If $2s\le1$, then we have
\begin{align*}
\sum_{j\le j_0}|x_j-x_{j-1}|^{2s}\ge \left(\sum_{j\le j_0}
(x_j-x_{j-1})\right)^{2s}=|x_{j_0}-x_0|^{2s}\ge r^{2s}.
\end{align*}
Combining these facts, we conclude that
\begin{align*}
\sup_{B_{Mr}} (t-u(x))\ge t-u(x_{j_0})\ge C r^{2s}
\end{align*}
and, therefore, the desired estimate is obtained by replacing $r$ by $r/M$. In a similar way, if we have
a point $x_1 \in B_r(x_0)$ such that $u(x_1)>t$, we obtain $\sup_{B_r}(u(x)-t)\ge Cr^{2s}$. This
completes the proof.
\end{proof}

We remark that in the case of $a=0$ the proof of \Cref{lem-nondeg}  also holds for the points at which
the pointwise $C^{0,1}$ norm is bounded. In summary, we have the following results on non-triviality for blowups.

\begin{corollary}\label{cor-nontrivial-a}
Let $(u,D)$ be an optimal pair and $a>0$. If a function $v_0$ is any blowup of $v:=t-u$, then $v_0$ is
not trivial, i.e., $v_0\not\equiv0$ on $\R^n$.
\end{corollary}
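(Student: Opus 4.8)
The plan is to read off non-triviality directly from the nondegeneracy estimate of \Cref{lem-nondeg}, which is tailored to exactly this scaling. Recall that, by \Cref{lem-blowup-not0}, a blowup $v_0$ of $v:=t-u$ at a free boundary point (which, after translating, we take to be the origin, so $0\in\pa D=\cF$) is obtained as the $C^\ga(\ov B_R^+)$ limit of the rescalings $v_{r_j}(X)=v(r_jX)/r_j^{1-a}$ along some sequence $r_j\downarrow0$. Since $1-a=2s$, the normalization $v/r^{1-a}$ is precisely the one under which the nondegeneracy bound is scale invariant, so one expects the limit to inherit a uniform lower bound on its boundary trace.

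Concretely, first I would apply \Cref{lem-nondeg} at $x_0=0\in\pa D$: this gives a constant $C>0$, independent of the scale $r$, with $\sup_{B_r\cap\R^n}|t-u|\ge Cr^{2s}$ for all small $r>0$ (for which $B_r\subset\Om$, as holds near $\pa D$). Restricting $v_{r_j}$ to the flat part $\Ga_1^0$ of the boundary of $B_1^+$ and using $v=t-u$ on $\R^n\ti\{0\}$ together with $1-a=2s$, this rescales to
\[
\sup_{\Ga_1^0}|v_{r_j}|=\fr{1}{r_j^{2s}}\,\sup_{B_{r_j}\cap\R^n}|t-u|\ \ge\ C\qquad\text{for all }j.
\]
Then I would choose points $x_j^*\in\ov{\Ga_1^0}$ realizing this supremum, pass to a subsequence with $x_j^*\to x^*\in\ov{\Ga_1^0}$, and use that the convergence $v_{r_j}\to v_0$ of \Cref{lem-blowup-not0} is uniform up to the boundary on $\ov B_1^+$ (hence on $\ov{\Ga_1^0}$) to conclude $|v_0(x^*,0)|=\lim_j|v_{r_j}(x_j^*)|\ge C>0$. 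Thus $v_0\not\equiv0$ on $\R^n$, which is the claim. For a blowup taken over a sequence $x_j\to x_0$ as in \Cref{sec:blowups}, the same argument applies after invoking \Cref{lem-nondeg} at free boundary points approaching $x_0$, using that its constant does not depend on the base point.

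I do not expect any genuine obstacle here: all the substance has already been done in \Cref{lem-nondeg-pre} and \Cref{lem-nondeg}, where the lower bound $\sup_{B_r}|t-u|\ge c\,r^{2s}$ near the free boundary is established; the present corollary is just the observation that this bound is scale invariant under the chosen normalization and therefore survives passing to the blowup limit. The only points to check carefully are the exponent bookkeeping ($1-a=2s$, so that $v/r^{1-a}$ is exactly the scale-invariant normalization) and the fact that \Cref{lem-blowup-not0} yields convergence of the rescalings up to the boundary $\Ga_1^0$, so that the size of the limit at a boundary point is controlled by the size of the rescalings there.
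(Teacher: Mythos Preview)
Your proposal is correct and matches the paper's approach: the paper states \Cref{cor-nontrivial-a} as an immediate consequence of the nondegeneracy estimate in \Cref{lem-nondeg} without giving an explicit proof, and your write-up simply fills in the straightforward details of passing the scale-invariant lower bound $\sup_{B_r}|t-u|\ge Cr^{2s}$ to the blowup limit via the uniform convergence of \Cref{lem-blowup-not0}.
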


\begin{corollary}\label{cor-nontrivial-0}
Let $(u,D)$ be an optimal pair and $a=0$. Then either any convergent subsequence of the rescaled function $v_r$ or that of $v_r/C_r$ has a non-trivial limit, where $C_r$ is the quantity in \Cref{lem-class}.
\end{corollary}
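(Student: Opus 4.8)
The statement to prove is \Cref{cor-nontrivial-0}: for $a=0$ (i.e.\ $s=\2$), any convergent subsequence of $v_r$, or of $v_r/C_r$, has a nontrivial limit.

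\medskip

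The plan is to split according to the dichotomy already set up in \Cref{lem-class}, and in each branch reduce non-triviality to a non-degeneracy statement near the free boundary, exactly as in the $a>0$ case treated in \Cref{cor-nontrivial-a}. First, suppose $\sup_{0<r<r_0}C_r=\infty$. Then by \Cref{lem-class}(ii) the rescalings $\tilde v_{r_j}=v_{r_j}/C_{r_j}$ converge to $\tilde v_0(X)=(a_1,\dots,a_n)\cdot x$ with $(a_1,\dots,a_n)\neq 0$, so the limit is automatically non-trivial and there is nothing further to do in this branch. Hence the only real work is in the branch $\sup_{0<r<r_0}C_r<\infty$, where \Cref{lem-class}(i) gives a subsequence $v_{r_j}\to v_0$ in $C^\ga_{loc}$ with $v_0\in C^{0,1}_{loc}(\R^{n+1}_+)$ solving $\De v_0=0$ in $\R^{n+1}_+$, $\pa_y v_0=h$ on $\R^n$ with $h\geq\la>0$. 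I must rule out $v_0\equiv 0$ on $\R^n$.

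\medskip

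To do this I would run the non-degeneracy argument of \Cref{lem-nondeg} at the point $0\in\cF$. The remark following \Cref{lem-nondeg} already notes that in the case $a=0$ that proof goes through \emph{at points where the pointwise $C^{0,1}$ norm is bounded}; in the branch $\sup C_r<\infty$ the quantity $C_r=\sup_{B_1^+}v(rX)/r$ being bounded is precisely a bound on the pointwise Lipschitz norm at the origin, so \Cref{lem-nondeg-pre} and \Cref{lem-nondeg} apply. This yields a constant $C>0$, independent of the scaling, with $\sup_{B_r(0)}|t-u|\geq C r$, i.e.\ $\sup_{B_1^+}|v_r|=\sup_{B_1^+}|v(rX)|/r\geq C$ for all small $r$. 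Passing to the limit along $r_j$ and using the $C^\ga_{loc}(\ov B_1^+)$ convergence, one gets $\sup_{\ov B_1^+}|v_0|\geq C>0$, so $v_0\not\equiv 0$ on $B_1^+$. To upgrade this to $v_0\not\equiv0$ \emph{on} $\R^n\ti\{0\}$, I would use that $v_0$ is harmonic in $\R^{n+1}_+$ with Neumann data $h\geq\la>0$ on $\R^n$: if $v_0$ vanished identically on $\R^n\ti\{0\}$ then, reflecting oddly across $\{y=0\}$, it would be harmonic on $\R^{n+1}$, and its normal derivative from above would be $0$, contradicting $h\geq\la>0$. Alternatively, and more in the spirit of the free boundary argument, the non-degeneracy estimate can be stated directly on the slice (as in \Cref{lem-nondeg}): $\sup_{B_r(x_0)\cap\R^n}|t-u|\geq Cr^{2s}$, which immediately gives $\sup_{B_1^0}|v_0|\geq C$.

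\medskip

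The main obstacle is the interplay between the normalization $C_{r_j}$ and the non-degeneracy constant: I must make sure that in the branch $\sup C_r<\infty$ the \emph{unnormalized} rescalings $v_{r_j}$ are the ones converging (they are, by \Cref{lem-class}(i)), so that the scale-invariant lower bound $\sup|v_r|\geq C$ from \Cref{lem-nondeg} transfers directly to $v_0$ without dividing by a possibly-large constant; and in the branch $\sup C_r=\infty$ I must \emph{not} try to use non-degeneracy at all, since there the relevant limit is $v_r/C_r$ and its non-triviality is already guaranteed by the explicit linear form in \eqref{eq-v0}. A minor technical point is that \Cref{lem-nondeg} as stated assumes $a>0$; for $a=0$ one invokes the remark after its proof, which requires the pointwise $C^{0,1}$ bound — automatically available here because we are in the case $\sup_{0<r<r_0}C_r<\infty$. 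Once these bookkeeping issues are handled the conclusion is immediate, so the corollary is essentially a packaging of \Cref{lem-class} together with the non-degeneracy lemma.
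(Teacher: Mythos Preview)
Your proposal is correct and follows exactly the approach the paper has in mind: the paper states this corollary (together with \Cref{cor-nontrivial-a}) immediately after the remark that the proof of \Cref{lem-nondeg} goes through for $a=0$ at points of bounded pointwise $C^{0,1}$ norm, with the single sentence ``In summary, we have the following results on non-triviality for blowups,'' giving no further argument. Your splitting into the two cases of \Cref{lem-class}, invoking the explicit linear limit \eqref{eq-v0} in case~(ii) and the non-degeneracy of \Cref{lem-nondeg} in case~(i), is precisely the intended unpacking; note also that since \Cref{lem-nondeg} is already stated for balls in $\R^n$, the ``upgrade'' step you discuss is unnecessary --- the alternative you give at the end is the direct route.
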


\section{Structure of optimal configuration}\label{sec:struc}

In this section, we shall prove the equation \eqref{thmeq-D}, i.e., the optimal configuration $D$ is
given by the sublevel set of $u$, for $s\leq 1/2$ $(a\ge0)$. The results from the previous section are the key ingredients. We follow
the argument in \cite{FF}.

\begin{lemma}\label{lem-D-main}
Let $(u,D)$ be an optimal pair and $a \ge 0$. The optimal configuration $D$ is given by the sublevel set of optimal solution $u$, i.e.,
\begin{align*}
D = \{ x\in \Om : u(x) \le t\}.
\end{align*}
\end{lemma}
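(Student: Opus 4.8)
The plan is to show that the level set $\{u=t\}$ has zero Lebesgue measure, from which the claim follows immediately: we already know $\{u<t\}\subset D\subset\{u\le t\}$ with $|D|=A$, so if $|\{u=t\}|=0$ then $\{u<t\}$ and $\{u\le t\}$ agree up to measure zero, and since $\la_\Om$ is insensitive to measure-zero modifications of $D$ we may take $D=\{u\le t\}$. So the real work is the measure-zero statement, and I would argue by contradiction following \cite{FF}.

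\textbf{Step 1: Set up on the free boundary and blow up.} Suppose $|\{u=t\}|>0$. By the Lebesgue density theorem, almost every point of $\{u=t\}$ is a density point; pick such a point $x_0$, which necessarily lies on the free boundary $\cF=\pa D$ (interior points of $D$ or of $\Om\sm D$ cannot be density points of $\{u=t\}$ by the continuity of $(-\De)^s u$ at locally constant points, \Cref{lem-continuity}/\Cref{cor-D}). Translate so $x_0=0$ and pass to the extension picture: $v=t-u$ solves \eqref{eq-v-ball}, $v(0)=0$, and $0$ is a density point of $\{v=0\}$ in $\R^n$. Now take the blowup. For $a\ne 0$ (i.e. $s<1/2$) apply \Cref{lem-blowup-not0} to get $v_{r_j}\to v_0$ in $C^\ga_{loc}$ with $L_a v_0=0$ in $\R^{n+1}_+$, $M_a v_0=h_0\ge\la>0$ on $\R^n$. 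For $a=0$ we must use \Cref{lem-class}: if the quantities $C_r$ stay bounded we get a blowup $v_0$ with $\pa_y v_0=h\ge\la>0$; if $C_r\to\infty$ we get instead $\tilde v_0(X)=(a_1,\dots,a_n)\cdot x$, a nonzero linear function independent of $y$.

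\textbf{Step 2: Derive the contradiction from the structure of the blowup.} The density point property passes to the limit: since $v_{r_j}\to v_0$ locally uniformly and $0$ is a density point of $\{v=0\}$, we get that $v_0$ (resp. $\tilde v_0$) vanishes on a set of full density at $0$ in $\R^n\times\{0\}$, hence $v_0\equiv 0$ on a neighborhood in $\R^n$ — actually on a set of positive measure, and by harmonicity/$L_a$-harmonicity of the even reflection together with unique continuation, $v_0\equiv 0$ on all of $\R^n\times\{0\}$, so $M_a v_0=0$ on $\R^n$. This contradicts $h_0\ge\la>0$ (resp. $h\ge\la>0$) in the bounded-$C_r$ regimes. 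In the remaining $a=0$, $C_r\to\infty$ case, $\tilde v_0(x)=(a_1,\dots,a_n)\cdot x$ is a nonzero linear function, but it must also vanish on a set of positive density at the origin — impossible for a nonzero linear function. Either way we reach a contradiction, so $|\{u=t\}|=0$.

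\textbf{Main obstacle.} The delicate point is making ``density point passes to the blowup limit and forces $v_0\equiv 0$'' rigorous: one needs to know the blowup limit is genuinely non-trivial where it should be (this is exactly what \Cref{cor-nontrivial-a} and \Cref{cor-nontrivial-0} from the nondegeneracy section provide — $v_0\not\equiv 0$ on $\R^n$), and then reconcile non-triviality with the vanishing forced by the density point. The resolution is that non-degeneracy says $v_0$ cannot vanish identically, while the density-point argument says it must; the tension is what yields the contradiction. Care is also needed for $s=1/2$: the two sub-cases of \Cref{lem-class} must be handled separately, and the $C_r\to\infty$ alternative is genuinely different in flavour (linear blowup rather than an $M_a$-equation). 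I expect the write-up to invoke unique continuation for $L_a$-harmonic functions (as in \cite{FF}, and compatible with the paper's stated strategy) to upgrade ``vanishes on a positive-density set'' to ``vanishes identically on the boundary,'' which is the technical heart of the argument.
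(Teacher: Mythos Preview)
Your overall strategy---assume $|\{u=t\}|>0$, pick a Lebesgue density point $x_0$, blow up there, and contradict the nondegeneracy results (\Cref{cor-nontrivial-a}, \Cref{cor-nontrivial-0})---is exactly the paper's approach. The paper implements the step ``density point forces the blowup to vanish on $B_1$'' via a H\"older inequality: since $v\equiv 0$ on $\Gamma_t$, one has $\int_{B_1}v_r^2\le \e^{1-2/2^*}|B_1|^{1-2/2^*}\big(\int_{B_1}v_r^{2^*}\big)^{2/2^*}$, passes to the limit in $r$, then sends $\e\to 0$ to conclude $v_0\equiv 0$ on $B_1$ (same for $\tilde v_0$). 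Your direct argument via uniform convergence and $|\{v_{r_j}=0\}\cap B_1|\to|B_1|$ reaches the same conclusion and is essentially equivalent.

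Your Step~2, however, takes a detour that is both unnecessary and wrong. The implication ``$v_0\equiv 0$ on $\R^n\times\{0\}\Rightarrow M_a v_0=0$'' is false: $v_0(x,y)=\tfrac{c}{1-a}\,y^{1-a}$ satisfies $L_a v_0=0$, vanishes on $\{y=0\}$, yet has $M_a v_0=c\ne 0$. Relatedly, the even reflection of $v_0$ is not $L_a$-harmonic across $\{y=0\}$ unless $M_a v_0=0$ is already known, so invoking unique continuation for the reflection is circular. Drop this entirely: once you have $v_0\equiv 0$ (resp.\ $\tilde v_0\equiv 0$) on $B_1$, you contradict \Cref{cor-nontrivial-a}/\Cref{cor-nontrivial-0} directly, which is precisely what the paper does and what you correctly identify in your ``Main obstacle'' paragraph. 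One smaller point: your justification that the density point must lie on $\partial D$ via \Cref{cor-D} is not valid, since a density point of $\{u=t\}$ need not be a point where $u$ is locally constant; the paper simply fixes an arbitrary density point of $\Gamma_t$ without verifying it lies on $\partial D$.
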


\begin{proof}
Since $D$ satisfies \eqref{eq-D2}, it suffices to show that the $t$-level set $\Ga_t$ of $u$ has measure
zero. Assume that $\Ga_t$ has positive measure. By Lebesgue's density theorem,
\begin{align*}
\chi_{\Ga_t}(x)= \lim_{r\ra 0+} \fr{1}{|B_r(x)|}\int_{B_r(x)}\chi_{\Ga_t}
\end{align*}
for all $x\in \R^n\sm N$, where $|N|=0$. This implies that, for all $x\in \Ga_t\sm N$,
\begin{align*}\label{eq-ae}
0=\lim_{r\ra 0+} \fr{|B_r(x)\cap (\R^n\setminus \Ga_t)|}{|B_r(x)|}.
\end{align*}

Now we fix a point $x_0\in \Ga_t\sm N$. For any $\e>0$ there exists $r_0$ such that, if $0<r<r_0$, then
\begin{align*}
|B_r(x_0)\cap  (\R^n\setminus \Ga_t)| \le \e |B_r(x_0)|.
\end{align*}
We recall $v(x)=t-u(x)$ from the previous section and observe $v\equiv 0 $ in $\Ga_t$. Then we have
\begin{align*}
\int_{B_r(x_0)}v^2\dd x&=\int_{B_r(x_0)\cap (\R^n\setminus \Ga_t)}v^2\dd x\\
&\le |B_r(x_0)\cap  (\R^n\setminus \Ga_t)|^{1-\fr{2}{2^*}}\left(\int_{B_r(x_0)} v^{2^*}\dd x\right)^{2/2^*}\\
&\le\e^{1-\fr{2}{2^*}} |B_r(x_0)|^{1-\fr{2}{2^*}}\left(\int_{B_r(x_0)} v^{2^*}\dd x\right)^{2/2^*},
\end{align*}
where $2^*=\fr{2n}{n-2s}$. In the notation of Section \ref{sec:blowups}, recall that we have defined $v_r(x)=r^{-2s}v(x_0+rx)$ so that the above inequality becomes
\begin{align*}
\int_{B_1(0)}v_r^2 \dd x\le \e^{1-\fr{2}{2^*}} |B_1(0)|^{1-\fr{2}{2^*}}\left(\int_{B_1(0)}
v_r^{2^*}\dd x\right)^{2/2^*}.
\end{align*}
Note that this still holds for $v_r/C_r$ instead of $v_r$. Then, \Cref{cor-nontrivial-a} and
\Cref{cor-nontrivial-0} imply that we have a subsequence $\{r_k\}$ such that either $\{v_{r_k}\}$ or $\{\tilde
v_{r_k}\}$ converge to a nonzero function $v_0$ or $\tilde v_0$ as $k\ra \infty$, respectively. By
taking $k\ra \infty$ in the above inequality we obtain, either for $\overline v=v_0$ or for $v=\tilde v_0$,
\begin{align*}
\int_{B_1(0)}\overline v^2 \dd x\le \e^{1-\fr{2}{2^*}} |B_1(0)|^{1-\fr{2}{2^*}}\left(\int_{B_1(0)}
\overline v^{2^*}\dd x\right)^{2/2^*},
\end{align*}
and then, by taking $\e\ra 0$, we finally have $v_0 \equiv0$ or $\tilde v_0\equiv0$ in $B_1(0)$,
respectively, which is a contradiction. Therefore, $|\Ga_t| =0$.
\end{proof}

We are now ready to prove our main Theorem:

\begin{proof}[Proof of \Cref{thm-reg}]
The regularity assertions follow from both \Cref{lem-regularity} and \Cref{lem-optreg}. Now \Cref{lem-D} and
\Cref{lem-D-main} give the sublevel set property.\\
\end{proof}

Next, we investigate some properties of the optimal configuration $D$ for general $\al>0$. To do
this, we begin with the following Lemma which implies the continuity of $(-\De)^su$.
\begin{lemma}\label{lem-continuity}
Let $u\in H^{2s}(\Om) \cap L^\infty(\Om)$. If $u$ is locally constant near a point $x_0\in \Om$, then
$(-\De )^s u(x)$ is continuous at $x_0$.
\end{lemma}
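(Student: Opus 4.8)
The plan is to use the singular integral representation of $(-\De)^s u$ directly and split the integral into a local piece near $x_0$ and a far piece. Since $u$ is locally constant near $x_0$, say $u\equiv c$ on $B_{2\rho}(x_0)\su\Om$ for some $\rho>0$, for any $x\in B_\rho(x_0)$ we may write
\begin{align*}
(-\De)^s u(x)= c_{n,s}\int_{\R^n}\fr{u(x)-u(y)}{|x-y|^{n+2s}}\dd y = c_{n,s}\int_{\R^n\sm B_{2\rho}(x_0)}\fr{c-u(y)}{|x-y|^{n+2s}}\dd y,
\end{align*}
because on $B_{2\rho}(x_0)$ we have $u(x)=u(y)=c$, so the integrand vanishes there and in particular the principal value is an honest (absolutely convergent) integral: the singularity at $y=x$ has been removed. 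This is the key observation — constancy near $x_0$ turns the singular operator into a harmless integral over a region bounded away from $x$.

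Next I would establish continuity of the resulting function $x\mapsto \int_{\R^n\sm B_{2\rho}(x_0)}\fr{c-u(y)}{|x-y|^{n+2s}}\dd y$ on $B_\rho(x_0)$. For $x$ ranging in the smaller ball $B_\rho(x_0)$ and $y\in\R^n\sm B_{2\rho}(x_0)$ we have $|x-y|\ge \rho$ on a neighborhood and $|x-y|\ge |x_0-y|/2$ for $|x_0-y|$ large, so the kernel $|x-y|^{-(n+2s)}$ is, uniformly in $x\in B_\rho(x_0)$, dominated by an $L^1(\R^n\sm B_{2\rho}(x_0))$ function against the finite measure $|c-u(y)|\dd y$ (here one uses $u\in L^\infty(\Om)$ on the bounded part and the decay $|x-y|^{-(n+2s)}$ at infinity together with the fact that $u$ vanishes outside the bounded set $\Om$, or more precisely $|c-u(y)|\le |c|+\|u\|_{L^\infty}$ on $\Om$ and $=|c|$ off $\Om$, which is integrable against the kernel since $n+2s>n$). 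Then for $x_k\to x$ in $B_\rho(x_0)$, the integrands converge pointwise for each fixed $y\ne x$, and dominated convergence gives convergence of the integrals. Hence $(-\De)^s u$ is continuous on $B_\rho(x_0)$, in particular at $x_0$.

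The only mild subtlety — and the step I would be most careful about — is making precise the sense in which $(-\De)^s u(x)$ equals this integral for a function merely in $H^{2s}(\Om)\cap L^\infty(\Om)$ rather than, say, $C^{2s}$: one should argue that the distributional/weak fractional Laplacian agrees a.e. on $B_\rho(x_0)$ with the pointwise integral just written. This follows because $u$ restricted to $B_{2\rho}(x_0)$ is smooth (constant), so the pointwise formula is valid classically there in the principal value sense, and the principal value reduces to the absolutely convergent tail integral as shown; combined with $u\in H^{2s}(\Om)$ guaranteeing the weak fractional Laplacian is a well-defined $L^2_{loc}$ function, the two coincide. Once the identification is in place, the domination argument above finishes the proof; in fact it shows $(-\De)^s u$ is not merely continuous but smooth (indeed real-analytic) in $B_\rho(x_0)$, since one may differentiate under the integral sign.
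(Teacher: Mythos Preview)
Your proof is correct and follows essentially the same approach as the paper: both exploit that local constancy kills the singular part of the integral, reduce $(-\De)^s u(x)$ to an absolutely convergent tail integral over $\R^n\setminus B_r(x_0)$, and then apply dominated convergence using $u\in L^\infty$ together with the decay of the kernel. Your version is in fact somewhat more careful---you write down the dominating function explicitly and flag the identification of the weak and pointwise fractional Laplacians---and you correctly observe that the argument actually yields smoothness of $(-\De)^s u$ in a neighborhood of $x_0$, not merely continuity.
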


\begin{proof}
Let $u$ be  constant in $B_r(x_0)$ for some $r>0$, and take a sequence $\{x_k\}\subset
B_{\fr{r}{2}}(x_0)$ converging to $x_0$. Notice that, for $\rho_k= r- |x_k-x_0|$,
$B_{\rho_k}(x_k)\subset B_r(x_0)$, and thus
\begin{equation*}
\begin{split}
\int_{\R^n\setminus B_r(x_0)}\fr{u(x_k)-u(y)}{|x_k-y|^{n+2s}}\dd y&\le 2\norm{u}_{L^\infty} \int _
{\R^n\setminus B_{\rho_k}(x_k)} \fr{1}{|x_k-y|^{n+2s}}\dd y\\
&=2\norm{u}_{L^\infty} \fr{n\om_n\rho_k^{-2s}}{2s}\\
&\le \fr{n\om_n 2^{2s}\norm{u}_{L^\infty}}{sr^{2s}}.
\end{split}
\end{equation*}
Using this and Lebesgue dominated convergence theorem, we conclude that
\begin{align*}
\lim _{k\ra \infty }(-\De)^{s}u (x_k)= c_{n,s}\int _{\R^n\setminus B_r(x_0)}
\fr{u(x_0)-u(y)}{|x_0-y|^{n+2s}}\dd y= (-\De )^{s}u(x_0).
\end{align*}
\end{proof}

We remark here that our regularity statement in \Cref{lem-regularity} does not imply the continuity of $(-\De)^su$.

\begin{corollary}\label{cor-D}
Let $(u,D)$ be an optimal pair. If $u$ is locally constant near a point $x_0\in \Om$, then either
$x_0\in \trm{int}\ ( D) $ or $x_0\in \trm{int}\ ( \Om\setminus D)$ holds.
\end{corollary}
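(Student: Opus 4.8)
The plan is to split according to the constant value $c$ of $u$ on a ball $B_\rho(x_0)$ on which it is constant (shrinking $\rho$ so that $B_\rho(x_0)\Subset\Om$), and to exploit the inclusion $\{u<t\}\subset D\subset\{u\le t\}$ from \Cref{lem-D}. Throughout, $D$ is taken up to a null set, so that an a.e.\ containment $|B_\rho(x_0)\setminus D|=0$ (resp.\ $|B_\rho(x_0)\cap D|=0$) does give $x_0\in\trm{int}(D)$ (resp.\ $x_0\in\trm{int}(\Om\setminus D)$) after a null modification of $D$. We may assume $A>0$: for $A=0$ one has $|D|=0$, so every point of $\Om$ lies in $\trm{int}(\Om\setminus D)$; and for $A>0$ one has $t>0$ by \Cref{lem-D}.

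The cases $c\ne t$ are immediate. If $c<t$, then $B_\rho(x_0)\subset\{u<t\}\subset D$, so $x_0\in\trm{int}(D)$. If $c>t$, then $B_\rho(x_0)\cap\{u\le t\}=\emptyset$, hence $B_\rho(x_0)\cap D=\emptyset$, so $x_0\in\trm{int}(\Om\setminus D)$.

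The substantial case is $c=t$, i.e.\ $u\equiv t$ on $B_\rho(x_0)$. Since $u\in H^{2s}_{loc}(\Om)$, the eigenvalue equation $(-\De)^s u+\al\chi_D u=\La u$ holds a.e.\ in $\Om$; on $B_\rho(x_0)$, where $u\equiv t$, it becomes
\[
(-\De)^s u(x)=t\bigl(\La-\al\chi_D(x)\bigr)\qquad\text{for a.e.\ }x\in B_\rho(x_0),
\]
so $h:=(-\De)^s u$ takes a.e.\ on $B_\rho(x_0)$ only the two values $m:=t(\La-\al)$ and $M:=t\La$, which are distinct because $t>0$ and $\al>0$. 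Since $u$ is locally constant near every point of $B_\rho(x_0)$, \Cref{lem-continuity} shows that $h$ is continuous on the connected open set $U:=B_\rho(x_0)$. A continuous function on a connected open set that a.e.\ takes only two distinct values is constant: $h^{-1}(\{m\})$ and $h^{-1}(\{M\})$ are disjoint, relatively closed, and exhaust $U$ (a point $p\in U$ with $h(p)\notin\{m,M\}$ would, by continuity, give $h\notin\{m,M\}$ on a neighborhood of $p$ of positive measure, contradicting the a.e.\ statement), so connectedness forces one of them to be empty. If $h\equiv m$ on $U$ then $\chi_D=1$ a.e.\ on $U$, whence $x_0\in\trm{int}(D)$; if $h\equiv M$ on $U$ then $\chi_D=0$ a.e.\ on $U$, whence $x_0\in\trm{int}(\Om\setminus D)$.

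The only real obstacle is the case $c=t$: unlike for local operators, the equation does not directly reveal the value of $\chi_D$ near $x_0$, since the fractional Laplacian of a locally constant function need not vanish. The remedy is to use instead the continuity of $(-\De)^s u$ at such points (\Cref{lem-continuity}) together with the two-valued structure of the right-hand side of the eigenvalue equation, which reduces everything to the elementary topological fact above; no estimates, nondegeneracy, or blowup analysis are needed.
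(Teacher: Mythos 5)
Your proof is correct and rests on the same key idea as the paper's: invoking \Cref{lem-continuity} to get continuity of $(-\De)^s u$ on a neighborhood of $x_0$, and then exploiting the two-valued structure $(-\De)^s u = c(\La-\al\chi_D)$ of the eigenvalue equation (with $c>0$, $\al>0$ ensuring the two values are distinct) to conclude $\chi_D$ is locally constant. Your version spells out the a.e.\ bookkeeping and the connectedness argument that the paper's two-sentence proof leaves implicit, and your preliminary case split into $c<t$, $c>t$, $c=t$ — dispatching the first two directly via the inclusions $\{u<t\}\subset D\subset\{u\le t\}$ from \Cref{lem-D} — is a harmless extra layer not present in the paper (the continuity argument in fact handles all three uniformly once one knows $c>0$ by \Cref{lem-sign}), but it isolates the only genuinely delicate case and makes the role of \Cref{lem-continuity} transparent.
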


\begin{proof}
From \Cref{lem-continuity}, $(-\De)^s u$ is continuous at $x_0$, and therefore so is $(\La-\al\chi_D)u$.
Since $u$ is a continuous function, there is a neighborhood $U$ of $x_0$ such that $U\subset D$ or
$U\subset \Om\setminus D$ hold. This completes the proof.
\end{proof}

The last Lemma in this section asserts that the any level set $\{ u=c\} $ does not have an interior point
in $\Om$ if $c>0$. In particular, $\{u=t\}$ has no interior points.

\begin{lemma}\label{lem-notconst}
Let $(u,D)$ be an optimal pair. Then $u$ is not locally constant near any point in $\Om$.
\end{lemma}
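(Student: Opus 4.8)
\textbf{Proof proposal for Lemma~\ref{lem-notconst}.}
The plan is to argue by contradiction: suppose $u$ is constant, equal to some value $c$, on a ball $B_r(x_0)\Subset\Om$. Since $u>0$ in $\Om$ we have $c>0$. By \Cref{cor-D}, after possibly shrinking $r$, the whole ball $B_r(x_0)$ lies in $\operatorname{int}(D)$ or in $\operatorname{int}(\Om\setminus D)$; in either case $\chi_D$ is constant on $B_r(x_0)$, so on this ball the eigenfunction equation reads $(-\De)^s u = (\La-\al\chi_D)u = \kappa c$ for a constant $\kappa\in\{\La-\al,\La\}$. On the other hand, evaluating the singular integral at $x_0$ with $u\equiv c$ on $B_r(x_0)$ gives
\begin{align*}
(-\De)^s u(x_0)= c_{n,s}\int_{\R^n\setminus B_r(x_0)}\frac{c-u(y)}{|x_0-y|^{n+2s}}\dd y,
\end{align*}
which is a fixed number. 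The key observation is that, more importantly, $(-\De)^s u$ is genuinely \emph{non-constant} on $B_r(x_0)$ unless $u\equiv c$ on all of $\R^n$: for a point $x\in B_r(x_0)$ we still have $u\equiv c$ on $B_{r-|x-x_0|}(x)$, so the same computation as in \Cref{lem-continuity} shows $(-\De)^s u(x)$ depends on $x$ through the \emph{exterior} integral $c_{n,s}\int_{\R^n\setminus B_r(x_0)}(c-u(y))|x-y|^{-n-2s}\dd y$, and this function of $x$ is real-analytic and non-constant on $B_r(x_0)$ whenever $u\not\equiv c$ outside the ball. Since $u=0$ on $\R^n\setminus\Om$ and $c>0$, indeed $u\not\equiv c$, and we reach a contradiction with $(-\De)^s u\equiv \kappa c$ on $B_r(x_0)$.

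In more detail, I would proceed as follows. First, fix the ball $B_r(x_0)$ on which $u\equiv c>0$, and use \Cref{cor-D} to get a smaller ball $B_{r'}(x_0)$ contained in $\operatorname{int}(D)$ or in $\operatorname{int}(\Om\setminus D)$, so that $(-\De)^s u=\kappa u=\kappa c$ holds pointwise and as an identity of constants on $B_{r'}(x_0)$ (using that $u\in H^{2s}_{loc}\cap C^\beta$ so the equation holds in the strong/a.e.\ sense and, by \Cref{lem-continuity}, with a continuous left-hand side). Second, split $(-\De)^s u(x)$ for $x\in B_{r'}(x_0)$ into the contribution of $B_r(x_0)$, which vanishes because $u$ is constant there, and the exterior contribution
\begin{align*}
F(x):=c_{n,s}\int_{\R^n\setminus B_r(x_0)}\frac{c-u(y)}{|x-y|^{n+2s}}\dd y .
\end{align*}
Third, observe that the integrand is smooth in $x$ for $x\in B_r(x_0)$ with uniform dominating bounds (exactly as in the estimate in the proof of \Cref{lem-continuity}), so $F$ is $C^\infty$, in fact real-analytic, on $B_r(x_0)$. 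Fourth, compute $\De_x F$ (or any second derivative), or argue directly that a function of the form $\int \mu(\dd y)|x-y|^{-n-2s}$ with $\mu$ a nonzero signed measure supported away from $B_r(x_0)$ cannot be constant on an open set; the cleanest route is to note that $F(x)=c_{n,s}\,(|x|^{-n-2s}\ast \nu)(x)$ restricted to $B_r(x_0)$, where $\nu=(c-u)\chi_{\R^n\setminus B_r(x_0)}$, and that $|x|^{-n-2s}$ (up to constant, the kernel of $(-\De)^{-s}$ composed appropriately, or simply a Riesz-type kernel) is not polyharmonic, so a nonzero such potential is non-constant on any ball. Finally, since $\nu\not\equiv 0$ — because $u=0$ on $\R^n\setminus\Om$ while $c>0$, so $c-u=c>0$ on the nonempty open set $\R^n\setminus\Om$ — we conclude $F$ is non-constant on $B_{r'}(x_0)$, contradicting $(-\De)^s u=F=\kappa c$ being constant there.

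I expect the main obstacle to be making rigorous the claim that the exterior potential $F$ is non-constant on $B_{r'}(x_0)$. One has to rule out the (a priori conceivable) coincidence that the positive mass of $c-u$ outside $\Om$ is exactly balanced by the negative mass where $u>c$ so that $F$ is locally constant. The robust fix is to apply the fractional Laplacian once more: if $F$ were constant on an open ball then $(-\De)^{t} F$ would also be controlled there, but a direct computation shows $(-\De)_x^{s}\big[|x-\cdot|^{-n-2s}\big]$ does not vanish, equivalently one uses that $|x|^{-n-2s}$ solves no second-order constant-coefficient elliptic equation; alternatively, one can invoke the real-analyticity of $F$ on $B_r(x_0)$ together with the unique continuation / strong maximum principle for $(-\De)^s$ (as in \cite{FF}, already used in \Cref{lem-D-main}) to propagate ``$u$ locally constant'' from $B_{r'}(x_0)$ to all of $\Om$, and then to $\R^n$ via the exterior condition, forcing $u\equiv c$ and hence $c=0$, a contradiction. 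Either way the qualitative input is the same: a nonzero Riesz-type potential of an exterior measure cannot be constant on an interior ball, and the exterior measure here is nonzero precisely because $u$ vanishes outside the bounded domain $\Om$ while $u$ is positive.
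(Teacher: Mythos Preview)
Your fallback route---invoking the unique continuation property from \cite{FF}---is exactly the paper's proof. After using \Cref{cor-D} to see that $(\La-\al\chi_D)u$, and hence $(-\De)^s u$, is locally constant, the paper immediately cites \cite{FF} to conclude $u\equiv 0$, a contradiction; there is no intermediate step through an exterior potential.

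Your primary route through the exterior integral $F$ is a natural heuristic, but the decisive step---that a potential of the form $\int_{\R^n\setminus B_r(x_0)}\nu(y)\,|x-y|^{-n-2s}\dd y$ with $\nu\not\equiv 0$ cannot be constant on an interior ball---is asserted rather than proved. The remark that $|x|^{-n-2s}$ ``is not polyharmonic'' does not by itself rule out cancellations in the signed integral, and applying $\De^k$ only gives a sequence of vanishing moments that does not obviously force $\nu\equiv 0$. Establishing that non-constancy rigorously is essentially a unique continuation statement for $(-\De)^s$ (indeed, setting $w=u-c$, your claim is that $w\equiv 0$ on an open set together with $(-\De)^s w$ constant there forces $w\equiv 0$ globally), so the detour collapses back to the citation of \cite{FF}. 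In short: your proposal is correct once you take the fallback, and that fallback \emph{is} the paper's argument; the exterior-potential computation adds intuition but no independent proof.
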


\begin{proof}
Assume that $u$ is a locally constant near a point $x_0$ in $\Om$. From \Cref{cor-D}, $(\La-\al\chi_D)u$
is a locally constant function. Then using the unique continuation property (see \cite{FF}), we have
$u\equiv 0$, which yields a contradiction.
\end{proof}

\section{Symmetry property}\label{sec:symm}
We devote  this section in proving a symmetry property when the domain has a directional symmetry
and convexity. The basic idea is to use Steiner symmetrization, but there is a technical issue when we
consider the equality case. To overcome this, we slightly modify the kernel a little bit.

\begin{proof}[Proof of \Cref{thm-symm}]
As in the local case from \cite{CGI+}, we apply Steiner symmetrization to the function $u(\cdot,x')$ and the set
$\{x_1:(x_1,x')\in D\}$ for each $x'=(x_2,\cdots,x_n)$. We refer the reader to Chapter 3 in \cite{LL}
for the definition and various properties of Steiner symmetrization.

Let $u^*(\cdot,x')$ be the Steiner symmetrization of $u(\cdot,x')$ for each $x'$, namely the function
$u^*$ is symmetric in $x_1$ and decreasing for $x_1\geq0$ with the same measure of super level set
\begin{align*}
	|\{x_1:u^*(x_1,x')>t\}|=|\{x_1:u(x_1,x')>t\}|.
\end{align*}
Using the integral representation, $\int f\dd x=\int_0^\infty \{f>t\}\dd t$, an easy consequence of the
definition is that $\int_\R (u^*)^2 \dd x_1=\int_\R u^2 \dd x_1$. Thus, integrating in $x'$,
\begin{align}\label{eq:re-1}
	\int_\Om (u^*)^2 \dd x=\int_\Om u^2 \dd x.
\end{align}
It is also a well-known property that $\int_\R \chi_{D^c}u^2 \dd x_1 \leq \int_\R
(\chi_{D^c})^*(u^*)^2 \dd x_1$, which is equivalent to
\begin{align}\label{eq:re-22}
	\int_\R (\chi_{D_*})(u^*)^2 \dd x_1 \leq \int_\R \chi_{D}u^2 \dd x_1,
\end{align}
where the set $D_*$ is defined by $\chi_{D_*}=1-(\chi_{D^c})^*$. Again, we integrate
\eqref{eq:re-22} in $x'$ to obtain
\begin{align}\label{eq:re-2}
	\int_\Om (\chi_{D_*})(u^*)^2 \dd x \leq \int_\Om \chi_{D}u^2 \dd x.
\end{align}

Now we claim that
\begin{align}\label{claim:re}
	\int_{\R^n}\int_{\R^n}\fr{(u^*(x)-u^*(y))^2}{|x-y|^{n+2s}}\dd x \dd y \leq
\int_{\R^n}\int_{\R^n}\fr{(u(x)-u(y))^2}{|x-y|^{n+2s}}\dd x \dd y,
\end{align}
and that this inequality holds with equality if and only if $u=u^*$. To see this, consider an approximate kernel
\begin{align*}
	K_\e(x_1;x')=(|x_1|^2+|x'|^2+\e)^{-\fr{n+2s}{2}}
\end{align*}
for $\e>0$ and note that $\norm{K_\e(\cdot;x')}_{L^1(\R)} \leq \norm{K_\e(\cdot;0)}_{L^1(\R)}\leq C$. It
follows from Theorem 3.7 in \cite{LL} that
\begin{equation}\label{ineq:re}\begin{split}
	\int_{\R}\int_{\R}&(u(x)-u(y))^2K_\e(x_1-y_1;x'-y')\dd x_1 \dd y_1\\ &=2\int_\R
u(y)^2\norm{K_\e(\cdot;x'-y')}_{L^1(\R)}\dd x_1\\
&\qquad-2\int_\R\int_\R u(x)u(y)K_\e(x_1-y_1;x'-y')\dd x_1\dd
y_1\\
	&\geq2\norm{K_\e(\cdot;x'-y')}_{L^1(\R)}\int_\R (u^*)^2\dd x_1\\
&\qquad -2\int_\R\int_\R
u^*(x)u^*(y)K_\e(x_1-y_1;x'-y')\dd x_1\dd y_1\\
	&=\int_{\R}\int_{\R}(u^*(x)-u^*(y))^2K_\e(x_1-y_1;x'-y')\dd x_1 \dd y_1.
\end{split}\end{equation}
Since the both hand sides of this inequality converge to the claimed expressions as $\e \ra 0$, the
result follows by monotone convergence theorem and integrating in $(x',y')$.

To verify the equality condition in \eqref{claim:re}, first note that if $x'\not=y'$, then the
inequality \eqref{ineq:re} holds even for $\e=0$ since $\norm{K_0(\cdot;x'-y')}_{L^1(\R)}\leq C$. Thus,
the equality in \eqref{claim:re} implies that in \eqref{ineq:re} with $x'\not=y'$ and $\e=0$. Then, from
Theorem 3.9 in \cite{LL}, $u(x)=u^*(x_1-z,x')$, where $z\in \R$ depends on $x'$. The number $z$, however,
must be zero because $u$ is symmetric with respect to the hyperplane $\{x_1=0\}$. Thus $u\equiv u^*$, as
claimed.

Now we are ready to prove assertions. Since the eigenvalue $\la(\al,D)$ is given by
\begin{align*}
	\inf_{u}\fr{\norm{(-\Delta)^{s/2}u}^2+\al\int_D u^2\dd x}{\int_\Om u^2 \dd x},
\end{align*}
we have from \eqref{eq:re-1}, \eqref{eq:re-2}, and \eqref{claim:re} that $\la(\al,D_*)\leq \la(\al,D)$.
Therefore, if $(u,D)$ is an optimal pair, then $\La(\al,D)=\la(\al,D_*)$ and, by the equality condition
in \eqref{claim:re}, $u\equiv u^*$. This proves the Theorem.
\end{proof}

Using this, we can show that the optimal configuration is an annulus when the domain is a ball.

\begin{proof}[Proof of \Cref{cor-ball}]
From \Cref{thm-symm}, $u$ is a rotationally symmetric function and decreases in the radial direction.
Moreover, \Cref{lem-D-main} implies \eqref{D-radial} and then the strictly decreasing property follows
from \Cref{lem-notconst}. To prove the uniqueness assertion, we first note that $r(A)$ does not depend
on $u$ so that the optimal configuration $D$ is unique. Now we assume that there are two solutions $u_1$
and $u_2$ with $t_1$ and $t_2$ such that
\begin{align*}
D= \{ u_1 \le t_1\} =\{ u_2\le t_2\}.
\end{align*}
Define $v:=u_1/t_1 - u_2/t_2$ and notice that $v$ solves
\begin{equation*}
\begin{alignedat}{3}
(-\De)^s v + \al \chi_D v&= \La v &&\qu \trm{in } \Om,\\
v &=0 &&\qu \trm{on } \R^n\sm\Om.
\end{alignedat}
\end{equation*}
From the definition of $\La$, together with \Cref{lem-sign} and \Cref{PN relation}, we can see that $v$
has a sign in $\Om$. This is a contradiction since $v(x)=0$ for $ |x|=r(A)$.
\end{proof}

\section{symmetry breaking}\label{sec:symm-breaking}
In the previous section, we proved the symmetry property of the optimal pair $(u,D)$ when the domain has
directional symmetry and convexity. Here we construct an example which presents  symmetry
breaking when the domain has radial symmetry for the case $s< \2$. An immediate consequence is non-uniqueness. Notice that the ball is the only case we can prove the uniqueness.

In \cite{CGI+}, the authors give the symmetry breaking examples when the domains are an annulus and a dumbbell
shape for the (local) composite membrane problem. For the nonlocal equation, N\'apoli considered  in \cite{Napoli} the
symmetry breaking property for an elliptic equation involving the fractional Laplacian. In that work,
the author proved that there are both a nontrivial radial solution and a non-radial one for a nonlocal
elliptic problem. Note that we shall prove that, for some large annular domain, the nonlocal
composite membrane problem admits only non-radially symmetric solutions.

We follow the argument in \cite{CGI+}, considering the third eigenvalue problem connecting the radial
symmetry eigenvalue problem to the non-radial one. However, some difficulties occur from the
nonlocality; for example, it is not clear how to decompose the fractional Laplacian into radial and
angular parts.

For an annulus
\begin{align*}
\Om_b=\{ x\in \R^2 ; b<|x|< b+1 \} ,\quad b>0,
\end{align*}
and a radial subset $D$ in $\Om_b$ such that
\begin{align}\label{D=,D_1=}
D=\{ (r,\theta ) ;	r \in D_1 , 0\le \theta <2\pi \} ,\quad D_1\subsetneq (b,b+1),
\end{align}
we consider the eigenvalue problem of the form
\begin{align}\label{evp-symm}
\begin{cases}
(-\De)^s u+\al \chi_D u=\sigma u \quad \trm{in }  \Om_b, \\
u=0 \quad  \trm{on }  \R^n \setminus \Om_b,
\end{cases}
\end{align}
where $u$ and $\sigma$ are the first eigenfunction and eigenvalue, respectively.

We shall construct a function $\tilde u$ and a domain $\tilde D$ with $|\tilde D|=|D|$, which satisfy
\begin{align*}
\fr{\int_{\Om_b}\tilde u(-\De )^s \tilde u \dd x+\al\int_{\Om_b}\chi_{\tilde D}\tilde u^2 \dd x}{\int_{\Om_b}\tilde
u^2\dd x} < \sigma.
\end{align*}
This means that any domain $D$ having symmetry is not an optimal configuration.

Let $\de=|D| / |\Om_b|$ and take a number $N=N(\de)$ such that
\begin{align*}
\de < 1-\fr{1}{2N}.
\end{align*}
To construct $(\tilde u, \tilde D)$, we define the sector
\begin{align*}
E_+=\Om_b\cap \{ (r,\theta); 0 \le \theta \le \pi / N\}.
\end{align*}
Then we may choose $\tilde D \subset \Om_b\setminus E_+$ since $|\tilde D|=\de|\Om_b|
<(1-\fr{1}{2N})|\Om_b|=|\Om_b \setminus E_+|$.

Let $\tilde u$ be the first Dirichlet eigenfunction of the fractional Laplacian on $E_+$ and $\la _1
(E_+)$ be the first eigenvalue so that
\begin{align*}
(-\De)^s \tilde u =&\la_1 (E_+ )\tilde u \quad \trm{in} \quad E_+,\\
\tilde u =&0 \quad \trm{on}\quad  \R^n \setminus E_+.
\end{align*}
Note that since $\trm{supp}\ \tilde u\cap \tilde D=\emptyset$, it is enough to show that
\begin{align}\label{lasi}
\la_1 (E_+)<\sigma.
\end{align}
In order to prove this, we need to introduce an intermediate eigenvalue problem. Define $v_0$ to be the
lowest eigenfunction for \eqref{evp-symm} among functions of the form
\begin{align*}
v(r,\theta)=h(r)\sin N\theta,
\end{align*}
and let $\tau$ be the associated eigenvalue. Clearly, $\si\le \tau$. We claim that $\tau$ is close
enough to $\si$ when $b$ is large. \\

\textbf{Claim 1.} $\tau \le \si +O(b^{-1-2s})$ as $b\ra \infty$.\\

Notice that $-1-2s > -2$ since $s < \2$. To prove \eqref{lasi}, we also need to show that $\la_1(E_+)$
is strictly less than $\tau$.\\

\textbf{Claim 2.} $\la_1(E_+)+c \le \tau$, where $c$ does not depend on $b$.\\

We will prove these claims below. Before this, we show \eqref{lasi} under the assumption that Claims 1
and 2 hold. By the claims, we have
\begin{align*}
\la_1(E_1)+c \le\tau \le \si +O(b^{-1-2s}).
\end{align*}
Taking large $b$, \eqref{lasi} follows.

\begin{proof}[Proof of Claim 1]
Let $h$ be the eigenfunction of \eqref{evp-symm} corresponding to the eigenvalue $\si$. Since $D$ has
radial symmetry, so does the eigenfunction $h$. Moreover, it is easy to see $h$ has a sign. Let $h>0$ in
$\Om_b$.

Take $v(r,\theta) = h(r)\sin N\theta$ in $\R^2$, and consider its extension $V$ to $\mathbb R^{3}_+$ given by \eqref{CS-extension}. Recall that the extended function $V$  is given by
\begin{align*}
V(x,y)= (P(\cdot ,y) * v)(x),
\end{align*}
where $P$ is the Poisson kernel from \eqref{Poisson}.
Since the Poisson kernel is a rotationally symmetric function and $v$ has a special form, $V$ also has such
special form. More precisely, we have
\begin{align*}
V(R_\vp x,y)=& \int_{\R^2}P(R_\vp x-\xi,y)v(\xi)\dd\xi \\
=& \int_0^\infty \int _0 ^{2\pi} P(R_\vp x-t(\cos \theta,\sin \theta),y) h(t)\sin (N\theta) t \dd\theta
\dd t\\
=& \int_0^\infty \int _0 ^{2\pi} P(x-t(\cos( \theta-\vp),\sin (\theta-\vp)),y) h(t)\sin (N\theta) t
\dd\theta \dd t\\
=& \int_0^\infty \int _0 ^{2\pi} P(x-t(\cos\theta,\sin \theta),y) h(t)\sin (N\theta+N\vp) t \dd\theta
\dd t\\
=& \cos(N\vp)\int_0^\infty \int _0 ^{2\pi} P(x-t(\cos\theta,\sin \theta),y) h(t)\sin (N\theta) t
\dd\theta \dd t\\
&+\sin(N\vp) \int_0^\infty \int _0 ^{2\pi} P(x-t(\cos\theta,\sin \theta),y) h(t)\cos (N\theta) t
\dd\theta \dd t\\
=&\cos (N\vp) V( x,y)+\sin (N\vp) V(R_{\fr{\pi}{2N}} x,y),
\end{align*}
where $R_\vp$ denotes a rotation and $(t,\theta)$ are polar coordinates for $\xi$. Notice that $V\equiv 0$ on $\{\theta=0\}$. Therefore, the extended
function $V$ also has the form
\begin{align*}
V( x, y ) =H(r,y)\sin (N\theta).
\end{align*}
Using this, the extension problem \eqref{CS-extension} for $V$ becomes
\begin{equation}\label{eq-H}\begin{split}
L_a H=&\fr{N^2}{r^2} H \quad \trm{on} \quad \R \times \{y>0\},\\
H(r,0)=&h(r) \quad \trm{on} \quad \R.
\end{split}
\end{equation}
Moreover, we have
\begin{equation}\label{def-H}
\begin{split}
H(r,y) &= V(R_{\fr{\pi}{2N}}(r,0),y)\\
&= \int_0^\infty \int _0 ^{2\pi} P((r,0)-t(\cos\theta,\sin \theta),y) h(t)\cos (N\theta) t \dd \theta
\dd t\\
&=C_{2,s} y^{2s}\int_b^{b+1}\int_0^{2\pi} \fr{th(t)\cos N\theta}{(r^2+t^2+y^2-2rt\cos \theta)^{1+s}}\dd
\theta \dd t.
\end{split}
\end{equation}
These properties yield the following Lemma:

\begin{lemma}\label{lem-H}
Let $\tilde H$ be the extended function of $h$. Then we have
\begin{align*}
0 \le H \le \tilde H.
\end{align*}
\end{lemma}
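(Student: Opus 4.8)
The plan is to compare the explicit integral formulas for $H$ and $\tilde H$, and then to reduce the lower bound to an elementary positivity property of Fourier cosine coefficients. The formula for $H$ is already available in \eqref{def-H}. For $\tilde H$, note that since $h$ is radial and supported in $\{b\le|x|\le b+1\}$, evaluating the Poisson extension \eqref{Poisson} of $h$ at the point $x=(r,0)$ and using $|(r,0)-t(\cos\theta,\sin\theta)|^{2}=r^{2}+t^{2}-2rt\cos\theta$ yields
\[
\tilde H(r,y)=C_{2,s}\,y^{2s}\int_{b}^{b+1}\int_{0}^{2\pi}\frac{t\,h(t)}{(r^{2}+t^{2}+y^{2}-2rt\cos\theta)^{1+s}}\dd\theta\dd t .
\]
Thus the integrands of $H$ and $\tilde H$ differ only by the factor $\cos N\theta$ occurring in $H$. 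Since $h\ge0$ (the radial eigenfunction in the proof of Claim~1 is taken positive), $t>0$ on $(b,b+1)$, the denominator is positive for $y>0$, and $1-\cos N\theta\ge0$, the integrand of $\tilde H$ pointwise dominates that of $H$; integrating gives $H\le\tilde H$.

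It remains to prove $H\ge0$. Fixing $r,y>0$ and $t\in(b,b+1)$ and setting $A:=r^{2}+t^{2}+y^{2}>0$, $B:=2rt\ge0$ (so $0\le B<A$), it suffices to show
\[
I_{N}:=\int_{0}^{2\pi}\frac{\cos N\theta}{(A-B\cos\theta)^{1+s}}\dd\theta\ \ge\ 0 ,
\]
because multiplying by $C_{2,s}y^{2s}t\,h(t)\ge0$ and integrating in $t$ then gives $H\ge0$. Writing $q:=B/A\in[0,1)$ and expanding by the binomial series
\[
(1-q\cos\theta)^{-(1+s)}=\sum_{m=0}^{\infty}\frac{(1+s)_{m}}{m!}\,q^{m}\cos^{m}\theta ,\qquad (1+s)_{m}:=(1+s)(2+s)\cdots(m+s)>0 ,
\]
which converges absolutely and uniformly in $\theta$ since $q<1$, and then inserting $\cos^{m}\theta=2^{1-m}\sum_{0\le j<m/2}\binom{m}{j}\cos((m-2j)\theta)+\varepsilon_{m}$, where $\varepsilon_{m}\ge0$ is the constant term present when $m$ is even, one integrates term by term against $\cos N\theta$: only the terms with $m\ge N$ and $m\equiv N\pmod 2$ survive, each contributing the strictly positive quantity $\frac{(1+s)_{m}}{m!}\,q^{m}\,2^{1-m}\binom{m}{(m-N)/2}\pi$. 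Hence $I_{N}\ge0$, which completes the argument.

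The only nonroutine point is this positivity of $I_{N}$; the key observation is that every Fourier cosine coefficient of $\cos^{m}\theta$ is nonnegative, so that the term-by-term integration produces a sum of nonnegative terms, and the exchange of sum and integral is legitimate because $q<1$. An alternative and slightly shorter route is to note that $(A-B\cos\theta)^{-(1+s)}=g(\cos\theta)$ with $g$ analytic on $[-1,1]$ and having nonnegative Taylor coefficients, and to invoke the classical fact that such a function has nonnegative Fourier cosine coefficients; I would use whichever formulation keeps the write-up cleanest.
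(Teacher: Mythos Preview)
Your argument is correct. The upper bound $H\le\tilde H$ is handled exactly as in the paper, by comparing integrands and using $\cos N\theta\le 1$ together with $h\ge 0$.

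For the lower bound $H\ge 0$, however, you take a genuinely different route. The paper argues via the PDE satisfied by $H$: it first proves a decay estimate $|H(r,y)|<\e$ outside a large half-ball, and then applies a maximum principle to the equation $L_aH=\frac{N^2}{r^2}H$ on $B_R^+$, using that $H(r,0)=h(r)\ge 0$ on the flat part of the boundary to reach a contradiction with an assumed negative interior value. Your approach instead works directly with the integral representation \eqref{def-H}, reducing the question to the nonnegativity of the $N$-th Fourier cosine coefficient of $\theta\mapsto (A-B\cos\theta)^{-(1+s)}$, which you establish by a binomial expansion and the elementary fact that all Fourier cosine coefficients of $\cos^m\theta$ are nonnegative. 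Your method is more self-contained and avoids both the decay estimate and the maximum principle; it also gives $H\ge 0$ pointwise with an explicit series for $I_N$. The paper's approach, on the other hand, is more robust in that it would apply to any solution of \eqref{eq-H} with nonnegative boundary data and suitable decay, without needing an explicit Poisson representation.
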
	

\begin{proof}
Since $\tilde H(\cdot, y)= P(\cdot,y)\ast h$, the second inequality follows from $h>0$ on $(b,b+1)$ and
$\cos(N\theta)\le 1$.

To see the first inequality, we observe that for any $\e>0$, there exists $R=R(\e)>0$ such that
\begin{align}\label{H-e}
|H(r,y)| < \e \qu \trm{on } \R^{n+1}_+\setminus B_R^+.
\end{align}
In fact, if $\sqrt{r^2+y^2} \ge 2b+2$, then we see
\begin{align*}
(r-t)^2+y^2 \ge (\sqrt{r^2+y^2}-t)^2\ge \fr{1}{4}(r^2+y^2),
\end{align*}
and hence, together with the Cauchy-Schwarz inequality and \eqref{def-H}, we have that
\begin{align*}
|H(r,y)|&\le 2\pi C_{2,s}y^{2s}\int_b^{b+1}\fr{ t h(t)}{\big((r-t)^2+y^2\big)^{1+s}}\dd t\\
&\le\fr{2^{3+2s}\pi C_{2,s}y^{2s}}{(r^2+y^2)^{1+s}}\sqrt{\left(\int_b^{b+1} h(t)^2t\dd
t\right)\left(\int_b^{b+1} t \dd t \right)}\\
&\le \fr{2^{2+2s}C_{2,s}\sqrt{ \pi(2b+1)}  \norm{h}_{L^2(\Om_b)}}{r^2+ y^2 } \ra 0 \qu \trm{as
}r^2+y^2\ra \infty.
\end{align*}
This gives \eqref{H-e}.

Now we assume, by contradiction, that $H(r,y)=-2\e $ for some point $(r_0,y_0)$ where $\e>0$. Take $R=R(\e)$ as in the above,
and then
\begin{align}\label{eq-H1}
-2\e \ge \inf_{\R_+\ti \R_+} H(r,y) = \inf_{B_R^+}H(r,y).
\end{align}
However, by a simple maximum principle argument for equation \eqref{eq-H}, we have
\begin{align}\label{eq-H2}
\inf_{B_R^+} H(r,y) = \inf_{\Ga_R^+ \cup \Ga_R^0} H(r,y).
\end{align}
Since $H(r,0)=h(r)\ge 0$, \eqref{eq-H1} and \eqref{eq-H2} imply that
\begin{align*}
-2\e \ge \inf_{\Ga_R^+} H(r,y) \ge -\e,
\end{align*}
 which is a contradiction.
\end{proof}

To finish the proof of Claim 1, we now compare the two eigenvalues $\tau$ and $\sigma$. From the definition of
$\tau$ and $\si$, together  with \Cref{lem-H}, we have
\begin{equation}\label{def-tau}
\begin{split}
\tau&= \fr{\int_{\R_+\times \R_+}(H_r^2+H_y^2)y^a r \dd r  \dd y+ \al \int_{D_1} h^2 r\dd
r}{\int_b^{b+1} h^2r\dd r}+\fr{\int_{\R_+\times \R_+}\fr{N^2}{r^2}H^2y^a r \dd r \dd y}{\int_b^{b+1}
h^2r\dd r}\\
&\le \si+\fr{\int_{\R_+\times \R_+}\fr{N^2}{r}H^2y^a  \dd r \dd y}{\int_b^{b+1} h^2r\dd r}.
\end{split}
\end{equation}
Hence, it only remains to prove
\begin{align}\label{claim1-key}
\fr{\int_{\R_+\times \R_+}\fr{N^2}{r}H^2y^a  \dd r \dd y}{\int_b^{b+1} h^2r\dd r} = O(b^{-1-2s}) \qu
\trm{as }b\ra \infty.
\end{align}
To see this, we consider the following two quantities:
\begin{align*}
I_1 := \fr{\int_0^\infty \dd y\int_0^{\fr{b}{2}}\dd r\fr{N^2}{r}H^2y^a  }{\int_b^{b+1} h^2r\dd r}, \qu
I_2:=\fr{\int_0^\infty \dd y\int_{\fr{b}{2}}^\infty \dd r \fr{N^2}{r}H^2y^a  }{\int_b^{b+1} h^2r\dd r}.
\end{align*}
We first estimate $I_2$. Recall that $\sin \theta \ge \fr{2}{\pi} \theta$ if $0\le \theta\le
\fr{\pi}{2}$. Using this and expression \eqref{def-H}, we have
\begin{align*}
H(r,y)&=C_{2,s} y^{2s}\int_b^{b+1}\int_{-\pi}^{\pi} \fr{th(t)\cos N\theta}{\left((r-t)^2+y^2+4rt\sin^2
\fr{\theta}{2}\right)^{1+s}}\dd \theta \dd t\\
&\le C_{2,s} y^{2s}\int_b^{b+1}\int_{-\pi}^{\pi} \fr{th(t)}{\left((r-t)^2+y^2+\fr{4}{\pi^2}rt\theta^2
\right)^{1+s}}\dd \theta \dd t.
\end{align*}
Let $K = \sqrt{\fr{(r-t)^2+y^2}{\fr{4}{\pi^2}rt}}$ and $\theta=K\theta'$. Then we obtain
\begin{align*}
H(r,y)\le& 2C_{2,s} y^{2s}\int_b^{b+1}\int_{0}^{\fr{\pi}{K}}
\fr{Kth(t)}{\left((r-t)^2+y^2\right)^{1+s}\left(1+\theta'^2 \right)^{1+s}}\dd \theta' \dd t\\
\le&4C_{2,s} y^{2s}\int_b^{b+1}\fr{Kth(t)}{\left((r-t)^2+y^2\right)^{1+s}}\dd t,
\end{align*}
since $\int_0^\infty \fr{\dd \theta'}{(1+\theta'^2)^{1+s}}<2$. By the Cauchy-Schwarz inequality,
\begin{align*}
H(r,y)^2 &\le C(s) y^{4s}\int _b^{b+1}\fr{K^2t}{\left((r-t)^2+y^2\right)^{2+2s}}\dd t \int _b^{b+1}
h^2(t) t \dd t\\
&\le C(s) y^{4s}\int _b^{b+1}\fr{1}{r\left((r-t)^2+y^2\right)^{1+2s}}\dd t \int _b^{b+1} h^2(t) t \dd t.
\end{align*}
Therefore, we see that
\begin{align*}
I_2\le C(N,s)\int_0^\infty \dd y \int_{\fr{b}{2}}^\infty \dd r\int_b^{b+1}\dd t
\fr{y^{1+2s}}{r^2\left((r-t)^2+y^2\right)^{1+2s}},
\end{align*}
Now take $y=|r-t|y'$. Using the property
$\int_0^\infty  \fr{y'^{1+2s}}{\left(1+y'^2\right)^{1+2s}} \dd y'\le 1+\fr{1}{2s}$, we arrive to
\begin{align*}
I_2\le C(N,s) \int_b^{b+1}\dd t \left(\int_{\fr{b}{2}}^{t}+\int_{t}^{2b+1}+\int_{2b+1}^{\infty}\right)
\dd r \fr{1}{r^2(r-t)^{2s}}.
\end{align*}
We conclude that $I_2= O(b^{-1-2s})$ as $b\ra \infty$ by a direct computation.

Our next task is estimating $I_1$. Again, recall \eqref{def-H} so that we observe
\begin{align*}
\fr{H(r,y)}{C_{2,s}y^{2s}}&=\sum_{i=0}^{2N-1}\int_b^{b+1} \int_{\pi(2i-1)/2N}^{\pi (2i+1)/2N}
\fr{th(t)\cos(N\theta)}{(r^2+t^2+y^2-2rt\cos\theta)^{1+s}}\dd \theta \dd t\\
&= \fr{1}{N} \sum_{i=0}^{2N-1}\int_b^{b+1} \int_{0}^{\pi}
\fr{th(t)\cos(\pi(2i-1)/2+\vp)}{(r^2+t^2+y^2-2rt\cos(\vp/N+\pi(2i-1)/2N))^{1+s}}\dd \vp \dd t\\
&= \fr{1}{N} \sum_{i=0}^{2N-1}\int_b^{b+1} \int_{0}^{\pi} \fr{(-1)^i
th(t)\sin\vp}{\big(r^2+t^2+y^2-2rt\cos(\vp/N+\pi(2i-1)/2N)\big)^{1+s}}\dd \vp \dd t.
\end{align*}
By the mean value theorem, we see that
\begin{align*}	
\fr{H(r,y)}{C_{2,s}y^{2s}}&\le  \fr{1}{N} \sum_{i=0}^{2N-1}\int_b^{b+1} \int_{0}^{\pi} \fr{(-1)^i
th(t)\sin\vp}{(r^2+t^2+y^2-2(-1)^i rt)^{1+s}}\dd \vp \dd t\\
&=2\int_b^{b+1}  \left]\fr{th(t)}{\left((t-r)^2+y^2\right)^{1+s}}-\fr{th(t)}{\left((t+r)^2+y^2\right)^{1+s}}
\right]\dd t\\
&\le \fr{4(1+s)(3b+2)r}{\big(b^2/4+y^2\big)^{2+s}}\int_b^{b+1} th(t)\dd t
\end{align*}
for $r\in(0,b/2)$, which implies, using Cauchy-Schwarz as in the estimate for $I_2$, that
\begin{align*}
I_1 \le C(N,s)\int_0^{\fr{b}{2}} \dd r \int_0^\infty \dd y \fr{y^{1+2s}(b+1)^3r}{(b^2+y^2)^{4+2s}}.
\end{align*}
We finally take $y=by'$ and use $\int_0^\infty \fr{y'^{1+2s}}{(1+y'^2)^{4+2s}}\dd y'\le 2$ to conclude
$I_1=O(b^{-1-2s})$ as $b\ra \infty$. This completes the proof of Claim 1.
\end{proof}

In order to prove Claim 2 we need the following Lemma. Although it is true for any dimension $n$, we just
consider the two-dimensional case for simplicity.

\begin{lemma}\label{lem-symmrelation}
Let $N$ be any positive integer. Let $v$ be a function of the form $v(r,\theta)=h(r)\sin (N\theta)$ in $\Om_b$
with $v\equiv 0$ in $\R^2\sm \Om_b$ and $h(r)\ge 0$ for $r\in[b,b+1]$. Then we have
\begin{align*}
\norm{(-\De)^{s/2}v}^2_{L^2(\R^2)}\ge 2N \norm{(-\De)^{s/2}(v\chi_{E})}^2_{L^2(\R^2)},
\end{align*}
where $E=\Om_b\cap \{ (r,\theta): 0 \le \theta < \pi / N\}$.
\end{lemma}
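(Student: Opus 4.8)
The plan is to cut $\R^2$ into the $2N$ congruent sectors $S_i=\{(r,\theta):\pi i/N\le\theta<\pi(i+1)/N\}$, $0\le i\le 2N-1$, and to exploit the two symmetries of $v=h(r)\sin(N\theta)$: the antiperiodicity $v(r,\theta+\pi/N)=-v(r,\theta)$ and the reflection symmetry $v(r,\pi/N-\theta)=v(r,\theta)$. Set $w:=v\chi_E=v\chi_{S_0}$, which is nonnegative because $h\ge 0$ and $\sin(N\theta)\ge 0$ on $[0,\pi/N]$, and $w_i:=v\chi_{S_i}$; antiperiodicity gives $w_i=(-1)^i\,w\circ R_{-\pi i/N}$, where $R_\alpha$ is rotation by $\alpha$, and $v=\sum_{i=0}^{2N-1}w_i$. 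Each $w_i$ is a rigid motion of $v\chi_E$, hence lies in $H^s(\R^2)$, and since $(-\De)^{s/2}$ commutes with rotations, expanding the quadratic form $\norm{(-\De)^{s/2}\cdot}^2$ gives
\[
\norm{(-\De)^{s/2}v}^2=\sum_{i}\norm{(-\De)^{s/2}w_i}^2+\sum_{i\ne j}\langle(-\De)^{s/2}w_i,(-\De)^{s/2}w_j\rangle=2N\norm{(-\De)^{s/2}w}^2+\sum_{i\ne j}\langle(-\De)^{s/2}w_i,(-\De)^{s/2}w_j\rangle,
\]
so the lemma reduces to showing the off-diagonal sum is nonnegative.

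For $i\ne j$ the supports of $w_i$ and $w_j$ meet only along a ray, a null set, so $\langle(-\De)^{s/2}w_i,(-\De)^{s/2}w_j\rangle=-c_{n,s}\int_{S_i}\int_{S_j}\frac{w_i(x)w_j(y)}{|x-y|^{n+2s}}\dd x\,\dd y$. Substituting $x=R_{\pi i/N}x'$, $y=R_{\pi j/N}y'$ and using rotation invariance of $|\cdot|$, this equals $(-1)^{i+j}\beta_{j-i}$, where for $m$ modulo $2N$
\[
\beta_m:=\int_{E_0}\int_{E_0}\frac{w(x)w(y)}{|x-R_{\pi m/N}y|^{n+2s}}\dd x\,\dd y,\qquad E_0:=\Om_b\cap S_0,
\]
and $\beta_m=\beta_{2N-m}$ (swap $x\leftrightarrow y$ and rotate). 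Summing over $i\ne j$ and collapsing the sum over $i$ turns the off-diagonal sum into $2N\,c_{n,s}\sum_{m=1}^{2N-1}(-1)^{m+1}\beta_m$, so it suffices to prove $\sum_{m\text{ odd}}\beta_m\ge\sum_{m\text{ even}}\beta_m$, both sums over $1\le m\le 2N-1$.

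This last inequality I would deduce from $\beta_m=\beta_{2N-m}$ together with the monotonicity $\beta_1\ge\beta_2\ge\cdots\ge\beta_N\ge 0$: writing both sums in terms of $\beta_1,\dots,\beta_N$ and pairing $\beta_{2j-1}\ge\beta_{2j}$, one closes the estimate using $\beta_N\ge 0$ when $N$ is odd and $2\beta_{N-1}\ge\beta_N$ when $N$ is even. For the monotonicity, with $x,y\in E_0$ one has $|x-R_\alpha y|^2=r_x^2+r_y^2-2r_xr_y\cos(\theta_x-\theta_y-\alpha)$ and $\theta_x,\theta_y\in[0,\pi/N]$; when $1\le m\le N-2$ and $\alpha\in[\pi m/N,\pi(m+1)/N]$, the angle $\theta_x-\theta_y-\alpha$ stays in $[-\pi,0]$, where $\cos$ is increasing, so $\alpha\mapsto|x-R_\alpha y|$ is nondecreasing and hence $\beta_m\ge\beta_{m+1}$. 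For the borderline step $\beta_{N-1}\ge\beta_N$ — where the angle may slip slightly past $\pi$ and the pointwise kernel comparison fails — I would instead use the reflection symmetry: the change of variable $y\mapsto(r_y,\pi/N-\theta_y)$, which preserves $E_0$ and leaves $w$ invariant, turns $\beta_{N-1}$ into $\int_{E_0}\int_{E_0}w(x)w(y)|x+\bar y|^{-n-2s}\dd x\,\dd y$ with $\bar y:=(r_y,-\theta_y)$, whereas $R_\pi y=-y$ gives $\beta_N=\int_{E_0}\int_{E_0}w(x)w(y)|x+y|^{-n-2s}\dd x\,\dd y$; since $0\le\theta_x+\theta_y\le 2\pi/N\le\pi$ and $|\theta_x-\theta_y|\le\theta_x+\theta_y$, we get $|x+\bar y|\le|x+y|$ and thus $\beta_{N-1}\ge\beta_N$.

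Apart from routine bookkeeping with rotations and the elementary inequality for finite monotone sequences, the point requiring genuine care is exactly this borderline monotonicity $\beta_{N-1}\ge\beta_N$, which is where the reflection symmetry of $v=h(r)\sin(N\theta)$ (beyond mere antiperiodicity) is essential; a secondary technical matter is justifying $\langle(-\De)^{s/2}w_i,(-\De)^{s/2}w_j\rangle=-c_{n,s}\int\int w_i(x)w_j(y)|x-y|^{-n-2s}\dd x\,\dd y$ for $i\ne j$, which rests on the common boundary ray of $S_i$ and $S_j$ being a null set near which $w$ vanishes linearly, so the singular integral converges.
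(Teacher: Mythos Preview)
Your proposal is correct and follows essentially the same route as the paper: decompose $\Om_b$ into $2N$ sectors, expand the Gagliardo form, reduce the cross terms to an alternating sum $\sum_{m=1}^{2N-1}(-1)^{m+1}\beta_m$ (the paper's $I_i$ equals $(-1)^i\beta_{i-1}$), and conclude via the monotonicity $\beta_1\ge\beta_2\ge\cdots\ge\beta_N$ together with $\beta_m=\beta_{2N-m}$. Your write-up is in fact more complete than the paper's, since you spell out the kernel-comparison proof of $\beta_m\ge\beta_{m+1}$ for $1\le m\le N-2$ and supply the reflection-symmetry argument for the borderline step $\beta_{N-1}\ge\beta_N$ (needed to get $2\beta_{N-1}\ge\beta_N$ in the even-$N$ case), which the paper only asserts.
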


\begin{proof}
In order to prove this, we first define
\begin{align*}
E_i=\Om_b \cap \{ (r,\theta):(i-1)\pi/N \le \theta < i\pi/N\}
\end{align*}
for $i=1,\cdots,2N$, and note that $E_1=E$.
Since $v$ is defined on $\Om_b= \cup_{i=1}^{2N}E_i$, we can decompose $v$ as $\sum_{i=1}^{2N}v_i$ where
$v_i=v\chi_{E_i}$. Observe that
\begin{align*}
|v(x)-v(y)|^2&=\left|\sum_{i=1}^{2N}\big(v_i(x)-v_i(y)\big)\right|^2 \\
&= \sum_{i=1}^{2N}\big(v_i(x)-v_i(y)\big)^2+\sum_{i\not=j}\big(v_i(x)-v_i(y)\big)\big(v_j(x)-v_j(y)\big)
\end{align*}
and $v(R_{k\pi /N}(x_1,x_2))=(-1)^{k}v(x_1,x_2)$. Using these, we have
\begin{align*}
\norm{(-\De)^{s/2}v}^2_{L^2(\R^2)}&= 2N \norm{(-\De)^{s/2}(v\chi_{E_1})}^2_{L^2(\R^2)}\\
&\qu +Nc_{2,s}\sum_{i\not=1} \int_{\R^2}\int_{\R^2}\fr{(v_1(x)-v_1(y))(v_i(x)-v_i(y))}{|x-y|^{2+2s}}\dd
x\dd y.
\end{align*}
We claim that the last term in the right hand side above is nonnegative. In fact, if one of $x$ and $y$
is contained in $\R^2\setminus \Om_b$ or both $x$ and $y$ are contained in the same $E_j$ for some $j$,
then
\begin{align}\label{eq-integrand-zero}
\left(v_1(x)-v_1(y)\right)\left(v_i(x)-v_i(y)\right)= 0
\end{align}
since $i\not=1$. Moreover, \eqref{eq-integrand-zero} also holds unless $(x,y)\in E_1\ti E_i$ or
$(x,y)\in E_i\ti E_1$. Thus, to have the conclusion, it suffices to show that
\begin{align}\label{ineq-sum-I}
-\sum_{i\not=1} \int_{E_1}\int_{E_i}\fr{v_1(x)v_i(y)}{|x-y|^{2+2s}}\dd x\dd y \ge 0.
\end{align}
To simplify the notation, let us define
\begin{align*}
I_i:=-\int_{E_1}\int_{E_i}\fr{v_1(x)v_i(y)}{|x-y|^{2+2s}}\dd x\dd y.
\end{align*}
Assume that $N=2k+1$. Notice that
\begin{align*}
I_{2k+2}&=-\int_{E_1}\int_{E_{2k+2}}\fr{v_1(x)v_{2k+2}(y)}{|x-y|^{2+2s}}\dd x\dd y\\
&=\int_{E_1}\int_{E_{1}}\fr{v_1(x)v_{1}(y)}{|x-R_{\pi}y|^{2+2s}}\dd x\dd y\ge 0,
\end{align*}
and
\begin{align*}
&I_2\ge -I_3\ge \cdots \ge -I_{2k+1},\\
&I_{4k+2}\ge-I_{4k+1}\ge \cdots \ge -I_{2k+3}.
\end{align*}
Then we have
\begin{align*}
\sum_{i=2}^{2N}I_i= \sum_{i=1}^k(I_{2i}+I_{2i+1})+\sum_{i=1}^k(I_{4k+4-2i}+I_{4k+3-2i})+I_{2k+2}\ge0.
\end{align*}
Now assume that $N=2k$. In this case, we see that
\begin{align*}
&I_2\ge -I_3\ge \cdots \ge I_{2k},\\
&I_{4k}\ge-I_{4k-1}\ge \cdots \ge I_{2k+2},\\
&I_{2k}+I_{2k+1}+I_{2k+2} \ge 0,
\end{align*}
which implies
\begin{align*}
\sum_{i=2}^{2N}I_i&=
\sum_{i=1}^{k-1}(I_{2i}+I_{2i+1})+\sum_{i=1}^{k-1}(I_{4k+2-2i}+I_{4k+1-2i})+I_{2k}+I_{2k+1}+I_{2k+2}\\
&\ge 0.
\end{align*}
In any case, we have \eqref{ineq-sum-I}, which completes the proof.
\end{proof}

To further proceed, we focus on the equation satisfied by the radial part $h_0$ of $v_0$. With some abuse
of notation, we write $h_0=h_0(|x|)=h_0(x)$, and then $(-\De)^s h_0$ is understood as fractional
Laplacian of the function $h_0$ defined on $\R^2$. Now we observe that for any $r$, by taking the point
$x$ such that $|x|=r$ and the angle of $x$ is $\fr{\pi}{2N}$, we have
\begin{align*}
(-\De)^s v_0(x)= (-\De)^s h_0(r) + c_{n,s}\int_0^{2\pi}\int_{b}^{b+1}\fr{h_0(t)(1-\sin
(N\theta))t}{(r^2+t^2-2rt\cos (\theta-\fr{\pi}{2N}))^{1+s}} \dd t\dd \theta.
\end{align*}
Moreover, for this $x$, the eigenfunction $v_1$ satisfies
\begin{align*}
(-\De)^s v_0(x)= (\tau -\al \chi_{D_1}(r))h_0(r).
\end{align*}
Therefore, the equation satisfied by $h_0$ is given by
\begin{align}\label{eq-h1}
(-\De)^s h_0(r)=(\tau-\al\chi_{D_1}(r))h_0(r) -B[h_0],
\end{align}
where
\begin{align*}
B[h]=c_{n,s}\int_0^{2\pi}\int_{b}^{b+1}\fr{h(t)(1-\sin (N\theta))t}{(r^2+t^2-2rt\cos
(\theta-\fr{\pi}{2N}))^{1+s}} \dd t\dd \theta.
\end{align*}

From now on, we estimate the coefficients in the right hand side of \eqref{eq-h1}.
\begin{lemma}\label{lem-estiB}
Let $0<s<\2$. Then we have, for $r\in [b,b+1]$,
\begin{align*}
B[h]\le C(s,N) b^{-1-2s} \norm{h}_{L^2(\Om_b)},
\end{align*}
where $C(s,N)$ is a constant.
\end{lemma}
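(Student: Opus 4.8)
The plan is to exploit the cancellation built into $B[h]$: the weight $1-\sin(N\theta)$ vanishes to second order at $\theta=\fr{\pi}{2N}$, which is precisely the angle at which the denominator $\big(r^2+t^2-2rt\cos(\theta-\fr{\pi}{2N})\big)^{1+s}$ is smallest, and this matching is what produces the decay $b^{-1-2s}$. First I would change variables $\vp=\theta-\fr{\pi}{2N}$. Since $N\theta=N\vp+\fr{\pi}{2}$ we have $1-\sin(N\theta)=1-\cos(N\vp)$, and because the resulting integrand is $2\pi$-periodic in $\vp$ the angular integral may be taken over $\vp\in(-\pi,\pi)$. Using $1-\cos(N\vp)\le\fr{N^2\vp^2}{2}$ together with $1-\cos\vp\ge\fr{2}{\pi^2}\vp^2$ on $|\vp|\le\pi$, which gives $r^2+t^2-2rt\cos\vp=(r-t)^2+2rt(1-\cos\vp)\ge(r-t)^2+\fr{4rt}{\pi^2}\vp^2$, one is reduced to bounding
\[
B[h]\le c_{n,s}\fr{N^2}{2}\int_{-\pi}^{\pi}\!\!\int_b^{b+1}\fr{h(t)\,t\,\vp^2}{\big((r-t)^2+\fr{4rt}{\pi^2}\vp^2\big)^{1+s}}\,\dd t\,\dd\vp,
\]
all integrands being nonnegative since $h\ge0$ and $\sin(N\theta)\le1$.

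Next I would carry out the $\vp$-integration for fixed $t$. Writing $c:=\fr{4rt}{\pi^2}$, using $\vp^2\le\fr1c\big((r-t)^2+c\vp^2\big)$ and then discarding $(r-t)^2\ge0$ in the denominator,
\[
\int_{-\pi}^{\pi}\fr{\vp^2\,\dd\vp}{\big((r-t)^2+c\vp^2\big)^{1+s}}\le\fr1c\int_{-\pi}^{\pi}\fr{\dd\vp}{\big((r-t)^2+c\vp^2\big)^{s}}\le\fr1{c^{1+s}}\int_{-\pi}^{\pi}\fr{\dd\vp}{|\vp|^{2s}}=\fr{2\pi^{1-2s}}{(1-2s)\,c^{1+s}},
\]
and the last integral is finite exactly because $s<\fr12$ — this is the one place the hypothesis is used. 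Since $r,t\in[b,b+1]$ we have $c\ge\fr{4b^2}{\pi^2}$, so $\fr{t}{c^{1+s}}=\big(\fr{\pi^2}{4}\big)^{1+s}r^{-1-s}t^{-s}\le C(s)b^{-1-2s}$, whence
\[
B[h]\le C(s,N)\,b^{-1-2s}\int_b^{b+1}h(t)\,\dd t.
\]
Finally, by Cauchy--Schwarz and $\norm{h}_{L^2(\Om_b)}^2=2\pi\int_b^{b+1}h(t)^2\,t\,\dd t\ge2\pi b\int_b^{b+1}h(t)^2\,\dd t$, one gets $\int_b^{b+1}h(t)\,\dd t\le\big(\int_b^{b+1}h(t)^2\,\dd t\big)^{1/2}\le(2\pi b)^{-1/2}\norm{h}_{L^2(\Om_b)}$, and for $b\ge1$ this yields $B[h]\le C(s,N)\,b^{-1-2s}\norm{h}_{L^2(\Om_b)}$.

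Once the substitution $\vp=\theta-\fr{\pi}{2N}$ is made the rest is essentially bookkeeping; the main thing to be careful about is to match the second-order vanishing of $1-\cos(N\vp)$ against the near-singularity of the kernel, so that after integrating in $\vp$ one is left with the \emph{integrable} power $|\vp|^{-2s}$ (here $2s<1$), and to check that in the regime $r,t\sim b$ the surviving factor $(rt)^{-1-s}$ together with the measure weight $t$ and the $L^2$ normalization of $h$ combine to exactly $b^{-1-2s}$. I do not expect a genuine obstacle beyond this.
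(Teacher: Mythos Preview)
Your proof is correct and follows essentially the same route as the paper: shift $\theta\mapsto\vp=\theta-\fr{\pi}{2N}$ to turn $1-\sin(N\theta)$ into $1-\cos(N\vp)$, bound numerator by $\vp^2$ and denominator below by $(r-t)^2+\fr{4rt}{\pi^2}\vp^2$, integrate in $\vp$, and finish with Cauchy--Schwarz. The only noteworthy difference is in how the $\vp$-integral is handled: the paper rescales $\theta'=\vp/\kappa$ with $\kappa=|r-t|/\sqrt{4rt/\pi^2}$ and then invokes finiteness of $\int_0^\infty (\theta')^2(1+(\theta')^2)^{-1-s}\,d\theta'$, which is in fact divergent for $s<\fr12$ (one must instead use the finite upper limit $\pi/\kappa$ together with the leftover factor $|r-t|^{1-2s}$); your bound $\vp^2\le c^{-1}\big((r-t)^2+c\vp^2\big)$ followed by dropping $(r-t)^2$ avoids this glitch and makes the use of $2s<1$ completely transparent, as the integrability of $|\vp|^{-2s}$ on $(-\pi,\pi)$.
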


\begin{proof}
We notice that
\begin{align*}
B[h]&= c_{2,s}\int_{-\pi}^{\pi}\int_{b}^{b+1}\fr{h(t)(1-\cos (N\theta))t}{(r^2+t^2-2rt\cos
\theta)^{1+s}} \dd t\dd \theta\\
&= 4c_{2,s}\int_{0}^{\pi}\int_{b}^{b+1}\fr{h(t)\sin^2 (\fr{N\theta}{2})t}{((r-t)^2+4rt\sin^2
(\fr{\theta}{2}))^{1+s}} \dd t\dd \theta.
\end{align*}
Since $\fr{2}{\pi}\theta\le \sin \theta $ for $0\le\theta\le\fr{\pi}{2}$ and $\sin\theta \le \theta$ for
any $\theta$, we have
\begin{align*}
B[h]\le C(s,N)\int_{0}^{\pi}\int_{b}^{b+1}\fr{h(t) \theta^2 t}{((r-t)^2+\fr{4}{\pi^2}rt \theta^2)^{1+s}}
\dd t\dd \theta.
\end{align*}
Now set $\theta = \kappa\theta' $ with $\kappa=\fr{|r-t|}{\sqrt{4rt/\pi^2}}$ so that
\begin{align*}
B[h]&\le C(s,N)\int_{0}^{\fr{\pi}{\kappa}}\int_{b}^{b+1}\fr{h(t) (\theta')^2
t\kappa^3}{(r-t)^{2+2s}(1+(\theta')^2)^{1+s}} \dd t\dd \theta'\\
&\le C(s,N)\int_{0}^{\fr{\pi}{\kappa}}\int_{b}^{b+1}\fr{h(t)
(\theta')^2|r-t|^{1-2s}}{b^2(1+(\theta')^2)^{1+s}} \dd t\dd \theta',
\end{align*}
where $C(s,N)$ is a constant depending only on $s,N$.
Observe that $\int_0^\infty \fr{\theta^2}{(1+\theta^2)^{1+s}}\dd \theta$ is the finite constant
depending on $s$ if $s<\2$. Using the Cauchy-Schwarz inequality, we therefore obtain
\begin{align*}
B[h] \le C(s,N)b^{-1-2s} \norm{h}_{L^2(\Om_b)}.
\end{align*}
\end{proof}

To estimate $\tau$, we need to estimate the energy of $h_0$ according to the dimension.
\begin{lemma}\label{lem-tau}
Assume that $b\ge 1$. Then the eigenvalue $\tau$ is bounded by some constant which is independent of
$b$.
\end{lemma}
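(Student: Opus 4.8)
The plan is to bound $\tau$ from above by a single well-chosen competitor. Recall that, by its definition (or equivalently by \eqref{def-tau}), $\tau$ is the infimum of the Rayleigh quotient
\begin{align*}
\frac{\norm{(-\De)^{s/2}v}^2+\al\int_D v^2\dd x}{\int_{\Om_b}v^2\dd x}
\end{align*}
taken over all nonzero $v\in H^s_0(\Om_b)$ of the form $v(r,\theta)=h(r)\sin(N\theta)$. So it suffices to produce one such $v$ whose Rayleigh quotient stays bounded as $b\ra\infty$. I would fix once and for all a nonzero $\phi\in C_c^\infty((0,1))$ and take
\begin{align*}
h(r)=\phi(r-b),\qu v(r,\theta)=\phi(r-b)\sin(N\theta);
\end{align*}
then $\supp v\su\Om_b$, so $v$ is admissible and of the required form, and the whole proof reduces to estimating its energy and its $L^2$ mass with explicit dependence on $b$.

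First I would compute the denominator in polar coordinates: $\int_{\Om_b}v^2\dd x=\pi\int_0^1\phi(\rho)^2(b+\rho)\dd\rho\ge \pi b\norm{\phi}_{L^2(0,1)}^2$, which grows linearly in $b$. The point of the lemma is therefore that the numerator is \emph{also} at most linear in $b$. The potential term is immediate: $\al\int_D v^2\dd x\le \al\int_{\Om_b}v^2\dd x\le C\al(b+1)$. For the Gagliardo seminorm I would use that $|\xi|^{2s}\le 1+|\xi|^2$ for $s\le 1$, so that by Plancherel $\norm{(-\De)^{s/2}v}^2\le\norm{v}_{L^2(\R^2)}^2+\norm{\D v}_{L^2(\R^2)}^2$; since on $\supp v$ one has $r\ge b\ge1$, and $\pa_r v=\phi'(r-b)\sin(N\theta)$ while $r^{-1}\pa_\theta v=Nr^{-1}\phi(r-b)\cos(N\theta)$, a direct integration gives $\norm{\D v}_{L^2(\R^2)}^2\le C(N)(b+1)\big(\norm{\phi'}_{L^2(0,1)}^2+\norm{\phi}_{L^2(0,1)}^2\big)$.

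Putting these together, the numerator is bounded by $C(N,\al)(b+1)$ and the denominator is bounded below by $\pi b\norm{\phi}_{L^2(0,1)}^2$, so for $b\ge1$
\begin{align*}
\tau\le\frac{C(N,\al)(b+1)}{\pi b\,\norm{\phi}_{L^2(0,1)}^2}\le C,
\end{align*}
with $C$ depending only on $N$, $\al$ and the fixed bump $\phi$, hence not on $b$. The one place that requires a little care — and the closest thing to an obstacle here — is precisely this bookkeeping: one cannot simply say ``the numerator is bounded'', because both numerator and denominator diverge like $b$ as the annulus recedes to infinity, and the uniform bound on $\tau$ comes from the cancellation of this common linear factor. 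No earlier lemma of the section is needed, and the argument is valid for every $s\in(0,1)$; the restriction $s<\2$ used elsewhere plays no role in this particular estimate.
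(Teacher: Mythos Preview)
Your proof is correct and genuinely more elementary than the paper's. The paper first invokes the already-established estimate \eqref{def-tau}--\eqref{claim1-key} (i.e.\ the work done for Claim~1) to get $\tau\le\la_1(\Om_b)+\al+O(b^{-1-2s})$, and then devotes the bulk of the argument to showing that the first Dirichlet eigenvalue $\la_1(\Om_b)$ of $(-\De)^s$ on the annulus is bounded uniformly in $b$; this is done by a fairly detailed comparison between the two-dimensional Gagliardo energy of a radial function and the one-dimensional Gagliardo energy of its radial profile (splitting the double integral into regions and changing variables), ultimately reducing to the eigenvalue on the unit interval. Your route bypasses all of this: you plug a single explicit competitor $v(r,\theta)=\phi(r-b)\sin(N\theta)$ into the Rayleigh quotient and use the pointwise inequality $|\xi|^{2s}\le 1+|\xi|^2$ to dominate the fractional energy by the $H^1$ norm, which is trivial to compute in polar coordinates. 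The price you pay is a dependence of the final constant on $N$ and $\al$ (as well as on the fixed bump $\phi$), but that is perfectly acceptable here, and what you gain is a short self-contained argument that does not lean on Claim~1 or on any comparison with the one-dimensional problem. Your remark that the restriction $s<\tfrac12$ plays no role in this particular estimate is also correct.
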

\begin{proof}
From \eqref{def-tau} and \eqref{claim1-key}, we have
\begin{align*}
\tau \le \la_1+\al +O(b^{-1-2s}),
\end{align*}
where $\la_1$ is the first eigenvalue of $(-\De)^s$ in $\Om_b$. It suffices to show that $\la_1$ has a
uniform bound independent of $b$.

Let $h$ be a function defined in $\R^2$ with $h(x)=h(y)$ for any $|x|=|y|$ and $h(x)=0$ unless
$x\in\Om_b$. Writing $(-\De)^s_1$ for the $1$-dimensional fractional Laplacian, we shall compare to
$(-\De)^s _1 h$ and its first Dirichlet eigenvalue.

First, we observe that
\begin{align}\label{lem84-eq1}
2\pi b \norm{h}_{L^2(\R)}^2\le \norm{h}_{L^2(\R^2)}^2\le 2\pi (b+1) \norm{h}_{L^2(\R)}^2,
\end{align}
and
\begin{equation}\begin{split}\label{lem84-eq2}
\int_{\R^2}\int_{\R^2} \fr{|h(x)-h(y)|^2}{|x-y|^{2+2s}}\dd x\dd y&=
\int_0^\infty\int_0^\infty\int_{-\pi}^{\pi}   \fr{2\pi(h(r)-h(t))^2 rt}{((r-t)^2+4rt\sin ^2
(\fr{\theta}{2}))^{1+s}} \dd \theta \dd r \dd t \\
&=:I_{2,1}+2I_{2,2},
\end{split}
\end{equation}
where
\begin{align*}
I_{2,1}= \int_0^{2b+1} \int_0^{2b+1}\int_{-\pi}^{\pi}   \fr{2\pi(h(r)-h(t))^2 rt}{((r-t)^2+4rt\sin ^2
(\fr{\theta}{2}))^{1+s}} \dd \theta \dd r  \dd t
\end{align*}
and
\begin{align*}
I_{2,2}=   \int_{2b+1}^\infty \int_0^{2b+1} \int_{-\pi}^{\pi} \fr{2\pi h(r)^2 rt}{((r-t)^2+4rt\sin ^2
(\fr{\theta}{2}))^{1+s}} \dd \theta  \dd r \dd t.
\end{align*}
We also notice that
\begin{equation}
\begin{split}\label{lem84-eq3}
\int_{\R}\int_{\R} \fr{|h(x)-h(y)|^2}{|x-y|^{1+2s}}\dd x\dd
y&=\int_{-\infty}^\infty\int_{-\infty}^\infty\fr{(h(r)-h(t))^2}{|r-t|^{1+2s}}\dd r\dd t\\
&\le 4I_{1,1}+8I_{1,2},
\end{split}
\end{equation}
where
\begin{align*}
I_{1,1}=\int_0^{2b+1} \int_0^{2b+1}  \fr{(h(r)-h(t))^2}{|r-t|^{1+2s}}\dd r \dd t
\end{align*}
and
\begin{align*}
I_{1,2}=\int_{2b+1}^\infty \int _0^{2b+1} \fr{h(r)^2}{|r-t|^{1+2s}}\dd r\dd t
\end{align*}
We have
\begin{align*}
I_{2,1}\le C \int_0^{2b+1}\int_0^{2b+1} \int_{0}^{\pi}
\fr{(h(r)-h(t))^2}{((r-t)^2+(4rt/\pi^2)\theta^2)^{1+s}}rt  \dd \theta \dd r\dd t
\end{align*}
and for $K=\sqrt{\fr{(r-t)^2}{4\pi rt/\pi^2}}$, we substitute $\theta=K \theta'$ so that
\begin{equation*}\begin{split}
I_{2,1}&\le C \int_0^{\pi/K}\fr{1}{(1+\theta^2)^{1+s}}d\theta\int_0^{2b+1} \int_0^{2b+1}
\fr{(h(r)-h(t))^2}{|r-t|^{2+2s}}Krt  \dd r\dd t\\
&\le C \int_0^{2b+1} \int_0^{2b+1}   \fr{(h(r)-h(t))^2}{|r-t|^{1+2s}}\sqrt{rt}  \dd r\dd t.
\end{split}\end{equation*}
This implies $I_{2,1}\le  Cb I_{1,1}$.

Now we estimate $I_{2,2}$. Since $\supp{h}\subset [b,b+1]$, we have
\begin{equation*}\begin{split}
I_{2,2}&=   \int_{2b+1}^\infty \int_0^{b+1} \int_{-\pi}^{\pi} \fr{2\pi h(r)^2}{((r-t)^2+4rt\sin ^2
(\fr{\theta}{2}))^{1+s}}rt \dd \theta  \dd r \dd t\\
&\le C \int_{2b+1}^\infty \int_0^{b+1}  \fr{ h(r)^2 }{|r-t|^{1+2s}}\fr{rt}{t-r}   \dd r \dd t\\
&\le C b I_{1,2},
\end{split}\end{equation*}
where we have used $\fr{t}{t-r}\le 3$.

From \eqref{lem84-eq1}, \eqref{lem84-eq2}, and \eqref{lem84-eq3}, together with the above estimate, we
have
\begin{equation*}
\la_1 \le \fr{I_{2,1}+2I_{2,2}}{\norm{h}^2_{L^2(\R^2)}}\le
C\fr{4I_{1,1}+8I_{1,2}}{\norm{h}_{L^2(\R)}^2}\le C\fr{\int_{\R}\int_{\R}
\fr{|h(x)-h(y)|^2}{|x-y|^{1+2s}}\dd x\dd y}{\norm{h}_{L^2(\R)}^2}.
\end{equation*}
If we take $h$ to be the first eigenfunction defined on $(b,b+1)$, then the last quantity exists. Now the
conclusion follows from the fact that this  quantity does not depend on $b$.
\end{proof}

Now we are ready to prove the Lemma below in analogy to Lemma 15 in \cite{CGI+}.

\begin{lemma}\label{lem-alphaqubdd}
Assume that $b\ge 1$ and  let $v_0=h_0(r)\sin (N\theta)$ be the first eigenfunction corresponding to the
eigenvalue $\tau$ in $\Om_b$. Let $\de:=|D|/|\Om_b|$. Then
\begin{align*}
\int_{\Om_b}\chi_{D} v_0^2 \dd x \ge c \int_{\Om_b} v_0^2\dd x,
\end{align*}
where $c$ does not depend on $b$.
\end{lemma}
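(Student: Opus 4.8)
The plan is to argue by contradiction and compactness, exploiting that the annulus $\Om_b$ is thin (width one) in the radial direction while its radius tends to $\infty$. Since $v_0=h_0(r)\sin(N\theta)$ and $D=D_1\ti[0,2\pi)$, integrating $\sin^2(N\theta)$ in $\theta$ shows the asserted inequality is equivalent to a uniform–in–$b$ lower bound
$\int_{D_1}h_0^2\,r\,\trm{d}r\ge c\int_b^{b+1}h_0^2\,r\,\trm{d}r$, i.e.\ a uniform bound from below on the fraction of the $L^2(r\,\trm{d}r)$–mass of $h_0$ carried by $D_1$. Suppose this fails: there are $b_k\to\infty$, radial sets $D^{(k)}\su\Om_{b_k}$ with $|D^{(k)}|=\de|\Om_{b_k}|$, and profiles $h_0^{(k)}$ with eigenvalues $\tau_k$ for which the above ratio tends to $0$.

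Recenter the radial variable: put $g_k(\rho):=h_0^{(k)}(b_k+\tfrac12+\rho)$ for $\rho\in(-\tfrac12,\tfrac12)$ (and $g_k\equiv0$ outside), normalized so $\norm{g_k}_{L^2(-1/2,1/2)}=1$, and let $\tilde D_k:=D_1^{(k)}-(b_k+\tfrac12)$. Because $\de\in(0,1-\tfrac1{2N})$ is fixed, $|\tilde D_k|\to\de\in(0,1)$, so along a subsequence $\chi_{\tilde D_k}\rah\eta$ weakly-$*$ in $L^\infty$ with $0\le\eta\le1$ and $\int\eta=\de>0$. By \Cref{lem-tau}, $\{\tau_k\}$ is bounded, hence $\tau_k\to\tau_\infty$ along a subsequence. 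Testing the eigenvalue equation for $v_0^{(k)}$ against itself gives $\norm{(-\De)^{s/2}v_0^{(k)}}^2=\tau_k\norm{v_0^{(k)}}^2-\al\int_{D^{(k)}}(v_0^{(k)})^2\le Cb_k$, while $\norm{v_0^{(k)}}^2_{L^2(\Om_{b_k})}\sim\pi b_k$; comparing the radial two–dimensional Dirichlet form on $\Om_{b_k}$ with the one–dimensional Gagliardo form on the unit interval exactly as in the proof of \Cref{lem-tau} (the factor $b_k$ cancels), one obtains a uniform bound $\norm{g_k}_{H^s(\R)}\le C$. Also the assumed decay of the ratio translates into $\int_{\tilde D_k}g_k^2\,\trm{d}\rho\to0$. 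By Rellich's theorem, along a further subsequence $g_k\to g_\infty$ strongly in $L^2$, with $\norm{g_\infty}_{L^2(-1/2,1/2)}=1$.

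Now pass to the limit in \eqref{eq-h1}. After recentering, $g_k$ satisfies $(-\De)^s g_k=(\tau_k-\al\chi_{\tilde D_k})g_k-B[h_0^{(k)}]$, where $(-\De)^s$ is the fractional Laplacian of the radial extension of $g_k$ to $\R^2$ evaluated near radius $b_k$. Since $B[h_0^{(k)}]=O(b_k^{-1-2s})$ by \Cref{lem-estiB}, $\tau_k\to\tau_\infty$, $\chi_{\tilde D_k}\rah\eta$, and the recentered radial two–dimensional operator converges (this is the content of the estimates in \Cref{lem-tau}) to the one–dimensional fractional Laplacian $(-\De)^s_1$, the limit $g_\infty$ is a nonzero weak solution of $(-\De)^s_1 g_\infty+\al\,\eta\,g_\infty=\tau_\infty g_\infty$ on $(-\tfrac12,\tfrac12)$ with $g_\infty\equiv0$ outside; the potential term passes to the limit because $(g_k)^2\to(g_\infty)^2$ in $L^1$ and $\chi_{\tilde D_k}\rah\eta$. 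Finally, $\int\eta\, g_\infty^2=\lim_k\int_{\tilde D_k}g_k^2=0$, so $g_\infty\equiv0$ on $\{\eta>0\}$, a set of measure $\de>0$. The unique continuation property for the fractional Laplacian in the form of \cite{FF} (as already used for \Cref{lem-notconst}) then forces $g_\infty\equiv0$, contradicting $\norm{g_\infty}_{L^2}=1$. Hence a uniform $c>0$ exists, and scaling back proves the lemma.

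The main obstacle is the passage to the limit: one must upgrade the comparability of Rayleigh quotients established in \Cref{lem-tau} to an actual convergence of the recentered radial two–dimensional fractional operator (equivalently, of the associated Caffarelli–Silvestre extension problems on the half–plane, as in the proof of Claim 1) to the one–dimensional operator on the fixed interval, uniformly in $b_k$, and then to verify that $g_\infty$ is a genuine weak solution on the open interval so that unique continuation applies to it. A secondary point is the concentration–compactness bookkeeping ensuring that the normalized profiles $g_k$ do not lose $L^2$–mass in the limit; this is where the uniform $H^s$–bound coming from the boundedness of $\tau_k$ (\Cref{lem-tau}) and the compact embedding $H^s(-\tfrac12,\tfrac12)\hookrightarrow L^2$ are used. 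The hypothesis $\de\in(0,1-\tfrac1{2N})$ is essential: it guarantees that in the limit the potential $\eta$ is nontrivial (so $\{\eta>0\}$ has positive measure) and not identically one.
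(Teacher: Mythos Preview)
Your approach differs substantially from the paper's and leaves its central step unproved. The paper argues directly, with no limiting procedure: since $|D_1|=\de$, one has $|D_1\cap[b+\de/4,b+1-\de/4]|\ge\de/2$, hence $\int_{\Om_b}\chi_D v_0^2\ge\frac{\pi\de b}{2}\inf_{[b+\de/4,\,b+1-\de/4]}h_0^2$ while $\int_{\Om_b}v_0^2\le 2\pi b\int_b^{b+1} h_0^2$. Harnack's inequality for the extension equation \eqref{eq-H} (whose zeroth-order coefficient $N^2/r^2$ is $O(b^{-2})$) gives $\sup_K h_0\le C\inf_K h_0$ on compact $K\Subset(b,b+1)$ with $C$ independent of $b$, and the global bound $\norm{h_0}_{L^\infty}\le C\norm{h_0}_{L^2}$ from \cite{BWZ} applied to \eqref{eq-h1} (coefficients controlled by \Cref{lem-estiB} and \Cref{lem-tau}) handles the boundary layers. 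Combining these yields $\int_b^{b+1} h_0^2\le C\inf_K h_0^2$, and the lemma follows.

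Your compactness-and-unique-continuation route hinges on the claim that the recentered radial two-dimensional fractional Laplacian converges to $(-\De)^s_1$ as $b_k\to\infty$, which you yourself flag as the ``main obstacle'' and do not resolve. \Cref{lem-tau} gives only a one-sided comparison of Dirichlet forms (it bounds the two-dimensional Rayleigh quotient above by a constant times the one-dimensional one), which is the wrong direction for your uniform bound $\norm{g_k}_{H^s(\R)}\le C$ and is in any case far from operator convergence in a sense strong enough to pass to the limit in \eqref{eq-h1}. Without knowing that $g_\infty$ is a genuine weak solution of a one-dimensional fractional equation with bounded potential, the unique-continuation theorem of \cite{FF} cannot be invoked, and the contradiction does not close. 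Making this rigorous would require showing that, after recentering at radius $b_k$, the angular integration in the two-dimensional singular kernel collapses to the one-dimensional kernel with quantitative error estimates; this is plausible but is genuine additional work that the paper entirely sidesteps by working at fixed $b$ via Harnack.
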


\begin{proof}
From \eqref{D=,D_1=}, since we can take $|D_1|=\de$, we have
\begin{align*}
\left|[b+\de/4,b+1-\de/4]\cap D_1\right|\ge \fr{\de}{2}.
\end{align*}
Then we have
\begin{align}\label{lem85-con1}
\int_{\Om_b}\chi_{D} v_0^2 \dd x=\pi \int_b^{b+1}\chi_{D_1}h_0^2 r \dd r \ge \fr{\pi \de b
}{2}\inf_{[b+\de/4,b+1-\de/4]}h_0^2
\end{align}
and
\begin{align}\label{lem85-con2}
\int_{\Om_b}v_0^2 \dd x= \pi\int_b^{b+1}h_0^2r\dd r \le 2\pi b\int_b^{b+1}h_0^2 \dd r.
\end{align}

Denote by $K$ the one dimensional compact subset $[b+\e,b+1-\e]$ of the interval $[b,b+1]$, where $\e$
is a small positive number. Now use the Harnack's inequality from \cite{SZ} applied to the equation \eqref{eq-H} in order to estimate
\begin{align*}
\sup_{K} h_0 \le C \inf_{K} h_0.
\end{align*}
Moreover, using Lemma 2.3 in \cite{BWZ} for equation \eqref{eq-h1}, with the estimates from \Cref{lem-estiB} and \Cref{lem-tau},
\begin{align*}
\norm{h_0}_{L^\infty((b,b+1))}\le C \norm{h_0}_{L^2((b,b+1))},
\end{align*}
for some $C$ independent of $b$.
Therefore, we have
\begin{align*}
\int_b^{b+1}h_0^2\dd r&=\int_K h_0^2\dd r+\int_{[b,b+1]\setminus K}h_0^2\dd r\le |K|\sup_K
h_0^2+(1-|K|)\sup_{(b,b+1)}h_0^2\\
&\le C \left(\inf_K h_0^2 + (1-|K|)\int_b^{b+1}h_0^2\dd r\right).
\end{align*}
By taking sufficiently small $\e$, we can have $C(1-|K|)\le \2$ so that we finally arrive to
\begin{align}\label{lem85-con3}
\int_b^{b+1}h_0^2\dd r \le C \inf_K h_0^2.
\end{align}
Again, we may take small $\e$ satisfying $[b+\de/4,b+1-\de/4] \subset K$. Now the conclusion follows
from \eqref{lem85-con1}, \eqref{lem85-con2}, and \eqref{lem85-con3}.
\end{proof}

\begin{proof}[Proof of Claim 2.]
The conclusion follows directly  from \Cref{lem-symmrelation} and \Cref{lem-alphaqubdd}.
\end{proof}

\noindent\textbf{Acknowledgements.}  M.d.M. Gonz\'alez is supported by the Spanish government grant
MTM2017-85757-P. Taehun Lee was supported by National Research Foundation of Korea Grant funded by the Korean Government (NRF-2014H1A2A1018664).
Ki-Ahm Lee is supported by the National Research Foundation of Korea (NRF) grant : NRF-2020R1A2C1A01006256. Ki- Ahm Lee also holds a joint appointment with the Research Institute of Mathematics of Seoul National University.

\bibliography{NCM}
\bibliographystyle{acm}

\end{document}